\theoremstyle{plain}
\newtheorem{lemma}{Lemma}
\newtheorem{theorem}[lemma]{Theorem}
\newtheorem{proposition}[lemma]{Proposition}
\newtheorem{definition}[lemma]{Definition}
\theoremstyle{remark}
\newtheorem{remark}{Remark}
\newcommand*  {\N} {{\mathbb N}}
\newcommand*{\rom}[1]{\expandafter\@slowromancap\romannumeral #1@}
\def\ee{\epsilon}
\def\Dd{\Delta}
\def\Om{\Omega}
\def\pp{\partial}
\begin{document}

\vskip 0.125in

\title[Primitive Geostrophic Adjustment Model]
{On the Well--posedness of Reduced $3D$ Primitive Geostrophic Adjustment Model With Weak Dissipation}

\date{March 24, 2019}

\author[C. Cao]{Chongsheng Cao}
\address[C. Cao]
{Department of Mathematics  \\
Florida International University  \\
University Park  \\
Miami, FL 33199, USA.} \email{caoc@fiu.edu}

\author[Q. Lin]{Quyuan Lin}
\address[Q. Lin]
{Department of Mathematics  \\
	Texas A\&M University  \\
	College Station  \\
	Texas, TX 77840, USA.} \email{abellyn@math.tamu.edu}

\author[E.S. Titi]{Edriss S. Titi}
\address[E.S. Titi]
{Department of Mathematics  \\
	Texas A\&M University  \\
	College Station  \\
	Texas, TX 77840, USA.  Department of Applied Mathematics and Theoretical Physics\\ University of Cambridge\\
Wilberforce Road, Cambridge CB3 0WA, UK.
 Department of Computer Science and Applied Mathematics \\
Weizmann Institute of Science  \\
Rehovot 76100, Israel.} \email{titi@math.tamu.edu} \email{Edriss.Titi@damtp.cam.ac.uk}
\email{edriss.titi@weizmann.ac.il}

\begin{abstract}
In this paper we prove the local well-posedness and global well-posedness with small initial data of the strong solution to the reduced $3D$ primitive geostrophic adjustment model with weak dissipation. The term reduced model stems from the fact that the relevant physical quantities depends only on two spatial variables. The additional weak dissipation helps us overcome the ill-posedness of original model. We also prove the global well-posedness of the strong solution to the Voigt $\alpha$-regularization of this model, and establish the convergence of the strong solution of the Voigt $\alpha$-regularized model to the corresponding solution of original model. Furthermore, we derive a criterion for finite-time blow-up of reduced $3D$ primitive geostrophic adjustment model with weak dissipation based on Voigt $\alpha$-regularization.
\end{abstract}

\maketitle

MSC Subject Classifications: 35A01, 35B44, 35Q35, 35Q86, 76D03, 86-08, 86A10.\\

Keywords: primitive geostrophic adjustment model; regularization; blow-up criterion

\section{Introduction}   \label{S-1}

It is commonly believed that the dynamics of ocean and atmosphere
adjusts itself toward a geostrophic balance. The following reduced $3D$
primitive geostrophic adjustment model is a main type diagnostic
model for studying geostrophic adjustment (cf. e.g., \cite{GI82}, \cite{HO04}, \cite{PZ05}):
\begin{eqnarray}
&&\hskip-.8in u_t + u\, u_x + w u_z -f_0 \, v+
p_x = 0 , \label{EQ-1-2}  \\
&&\hskip-.8in v_t + u\, v_x + w
v_z +f_0\, u = 0 , \label{EQ-2-2}  \\
&&\hskip-.8in
p_z  +  T =0 ,   \label{EQ-3-2}  \\
&&\hskip-.8in
u_x + w_z =0 ,  \label{EQ-4-2} \\
&&\hskip-.8in T_t  + u \, T_x + w \, T_z =  0, \label{EQ-5-2}
\end{eqnarray}
where the velocity field $(u, v, w)$, the temperature $T$ and the pressure $p$ are the unknown functions of the variable $(x, z,t)$, i.e., they depend only on two spatial variables, and $f_0$ is the Coriolis parameter. System (\ref{EQ-1-2})--(\ref{EQ-5-2}) is reduced from the $3D$ inviscid primitive equations model by assuming that the flow is independent of the $y$ variable. This system has been a standard framework to study geostrophic adjustment of frontal anomalies in a rotating continuously stratified fluid of strictly rectilinear fronts and jets (cf. e.g., \cite{BL72}, \cite{GI76}, \cite{GI82}, \cite{HO93}, \cite{HO04}, \cite{KP97}, \cite{PZ05}, \cite{RO38} and references therein).

The first systematically mathematical studies of the viscous primitive equations (PEs) were carried out in the 1990s by Lions--Temam--Wang \cite{LTW92a,LTW92b,LTW95}, where they considered the PEs
with both full viscosities and full diffusivity, and established the global existence of weak solutions. The uniqueness of weak solutions in the $3D$ viscous case is still an open problem, while the weak solutions in $2D$ turn out to be unique, see Bresch, Guill\'en-Gonz\'alez, Masmoudi and Rodr\'iguez-Bellido \cite{BGMR03}. Concerning the strong solutions for the $2D$ case, the local existence result was established by Guill\'en-Gonz\'alez, Masmoudi and Rodr\'iguez-Bellido \cite{GMR01}, while the global existence for $2D$ case was achieved by Bresch, Kazhikhov
and Lemoine in \cite{BKL04}, and by Temam and Ziane in \cite{TZ03}. The global existence of strong solutions for $3D$ case was established by Cao and Titi in \cite{CT07} and later by Kobelkov in \cite{K06}, see also the subsequent articles of Kukavica and Ziane \cite{KZ07,KZ072}, for different boundary conditions,  as well as Hieber and Kashiwabara \cite{Hieber-Kashiwabara} for some progress towards relaxing the smoothness on the initial data by using the semigroup method.  On the other hand, it has already been proven that smooth solutions to the inviscid $2D$ or $3D$ PEs, with or without coupling to the temperature equation, can develop singularities in finite time, see Cao et al. \cite{CINT15} and Wong \cite{W12}. Motivated by physical considerations, it is of great interest to investigate the global well-posedness, finite-time blow-up, or even ill-posedness of the PEs with only partial viscosities or partial diffusivity. There has been several works concerning the mathematical study of these models. The global existence and uniqueness of strong solutions for the PEs with full viscosities and with either horizontal or vertical diffusivity have been established by Cao--Titi \cite{CT12} and Cao--Li--Titi \cite{CLT14a,CLT14b}. Concerning partial viscosities, strong solutions are shown to be unique and exist globally in time for the PEs with only horizontal viscosity and only horizontal diffusivity for any initial data in $H^2$ by Cao--Li--Titi \cite{CLT16} (see Cao--Li--Titi \cite{CLT17} for some generalization of the result in \cite{CLT16}), and for the PEs with only horizontal viscosity and only vertical diffusivity by Cao--Li--Titi \cite{CLT17b}. On the other hand, there is no results concerning the well-posedness or finite-time blow-up for the PEs with only vertical viscosity, even for the two-dimensional case. In this paper, we are interested in this situation. More specifically, we are interested in system (\ref{EQ-1-2})--(\ref{EQ-5-2}) with only vertical viscosity and full diffusivity:
\begin{eqnarray}
&&\hskip-.8in u_t -\nu u_{zz}+ u\, u_x + w u_z -f_0 \, v+
p_x = 0 , \label{EQ-1-5}  \\
&&\hskip-.8in v_t -\nu v_{zz} + u\, v_x + w
v_z +f_0\, u = 0 , \label{EQ-2-5}  \\
&&\hskip-.8in
p_z  +  T =0 ,   \label{EQ-3-5}  \\
&&\hskip-.8in
u_x + w_z =0 ,  \label{EQ-4-5} \\
&&\hskip-.8in T_t -\kappa \Dd T + u \, T_x + w \, T_z =  0. \label{EQ-5-5}
\end{eqnarray}
The main mathematical difficulty to prove well-posedness of system (\ref{EQ-1-5})--(\ref{EQ-5-5}) is the lack of control over the horizontal derivatives. The situation with only vertical viscosity, i.e. system (\ref{EQ-1-5})--(\ref{EQ-5-5}), is worse than the situation with only horizontal viscosity, for which the global well-posedness was established. Indeed, system (\ref{EQ-1-2})--(\ref{EQ-5-2}) and system (\ref{EQ-1-5})--(\ref{EQ-5-5}) turn out to be ill-posed. By considering system (\ref{EQ-1-2})--(\ref{EQ-5-2}) and system (\ref{EQ-1-5})--(\ref{EQ-5-5}) without the Coriolis force, the velocity in $y$ direction and the temperature, i.e., by setting $f_0=0$, $v\equiv 0$ and $T\equiv 0$, we end up with the so-called $2D$ hydrostatic Euler equations:
 \begin{eqnarray}
 &&\hskip-.8in u_t + u\, u_x + w u_z +
 p_x = 0 , \label{EQ-1-3}  \\
 &&\hskip-.8in
 p_z  =0 ,   \label{EQ-2-3}  \\
 &&\hskip-.8in
 u_x + w_z =0 ,  \label{EQ-3-3}
 \end{eqnarray}
and $2D$ hydrostatic Navier-Stokes equations:
\begin{eqnarray}
&&\hskip-.8in u_t + u\, u_x + w u_z +
p_x - \nu u_{zz}= 0 , \label{EQ-1-4}  \\
&&\hskip-.8in
p_z  =0 ,   \label{EQ-2-4}  \\
&&\hskip-.8in
u_x + w_z =0 ,  \label{EQ-3-4}
\end{eqnarray}
correspondingly. The linear ill-posedness of system (\ref{EQ-1-3})--(\ref{EQ-3-3}), about certain shear-flows, has been established by Renardy in \cite{RE09}. Furthermore, the author of \cite{RE09} has also indicated, without providing details, that one should be able to show the linear ill-posedness of system (\ref{EQ-1-4})--(\ref{EQ-3-4}) in any Sobolev space by means of matched asymptotics.  On the other hand, the nonlinear ill-posedness of system (\ref{EQ-1-3})--(\ref{EQ-3-3}) has been established by Han-Kwan and Nguyen in \cite{HN16}, where they built an abstract framework to show the hydrostatic Euler equations are ill-posed in any Sobolev space. One might be able to argue that the main reason for the ill-posedness in these is again due the lack of control over the horizontal derivatives. From a mathematical perspective, system (\ref{EQ-1-4})--(\ref{EQ-3-4}) is reminiscent of the 2D Prandtl system, in the upper half space. The ill-posedness in Sobolev spaces of Prandtl system was established by G\'erard-Varet and Dormy \cite{GD10}, and by G\'erard-Varet and Nguyen \cite{GN12}. In \cite{EE97} , the author established finite-time blow-up of solutions of Prandtl system. On the other hand, the only well-posedness results of hydrostatic Euler equations and Prandtl system can be obtained by assuming either real analyticity or some special structures on the initial data \cite{BR99,BR03,GR99,KMVW14,KTVZ11,KV13,MW12,MW15,OL66}. Recently, the authors in \cite{GMV18} showed the local well-posedness for system (\ref{EQ-1-4})--(\ref{EQ-3-4}) with convex initial data in Gevrey class, which is slightly larger than analyticity class.

All the results and discussions above suggests that, in order to prove the well-posedness for system (\ref{EQ-1-5})--(\ref{EQ-5-5}) in Sobolev spaces, without assuming any special structures, some additional horizontal dissipation terms are necessary. In the derivation of system (\ref{EQ-1-4})--(\ref{EQ-3-4}) from the Navier-Stokes equations, only vertical viscosity is survived. This suggests that the strong dissipation, i.e., horizontal viscosity, is not always expected to exist. Instead, we will consider some weaker dissipation. Inspired by Samelson and Vallis \cite{SV97}, and Salmon \cite{S98}, by introducing the linear (Rayleigh-like friction) damping in both horizontal momentum equation and vertical hydrostatic approximation to system (\ref{EQ-1-5})--(\ref{EQ-5-5}), we consider the following model:
\begin{eqnarray}
&&\hskip-.8in u_t + u\, u_x + w u_z + \ee_1 u -f_0 \, v+
p_x -\nu u_{zz}= 0 , \label{EQO-1}  \\
&&\hskip-.8in v_t + u\, v_x + w
v_z + \ee_1 v +f_0\, u -\nu v_{zz} = 0 , \label{EQO-2}  \\
&&\hskip-.8in
\ee_2 w + p_z  +  T =0 ,   \label{EQO-3}  \\
&&\hskip-.8in
u_x + w_z =0 ,  \label{EQO-4} \\
&&\hskip-.8in T_t -\kappa \Dd T + u \, T_x + w \, T_z =  0, \label{EQO-5}
\end{eqnarray}
in the horizontal channel $\{(x,z): 0\leq z\leq H, x\in \mathbb{R}\}$, with the following boundary conditions:
\begin{eqnarray}
&&\hskip-.8in
(u_z, v_z, w, T)|_{z=0,H}=0, \nonumber  \\
&&\hskip-.8in
u, v, w, T \;\text{are periodic in} \; x \; \text{with period} \; 1. \label{OBC}
\end{eqnarray}
Here $\ee_1$ and $\ee_2$ are positive constants representing the linear (Rayleigh-like friction) damping coefficients, and $\nu$ is positive constant which stands for the vertical viscosity of the horizontal momentum equations. Since it is believed that the adjustment time is not very long, friction may be assumed to have negligible influence on the subsequent development of the flow. However, in this article, we show that the friction plays important roles in the well-posedness of the adjustment system. More specifically, the damping term $\ee_2 w$ in (\ref{EQO-3}) plays an important role as a weak dissipation in horizontal direction through $w_z$ and the incompressibility (\ref{EQO-4}). Unlike the case with strong horizontal dissipation, i.e., with horizontal viscosity, the global well-posedness of system (\ref{EQO-1})--(\ref{EQO-5}) is still a difficult task. However, in this paper, we are able to prove the local well-posedness and global well-posedness with small initial data. We refer the reader to \cite{CT10} where the authors established the global well-posedness of the $3D$ Salmon's planetary geostrophic oceanic dynamics model which involves similar damping term in the hydrostatic equations.

In order to study the possible finite-time blow-up of system (\ref{EQO-1})--(\ref{EQO-5}), and to give a reliable numerical model/scheme to system (\ref{EQO-1})--(\ref{EQO-5}), we also study the Voigt $\alpha$-regularization of system (\ref{EQO-1})--(\ref{EQO-5}), with the regularization only in the $z$ variable. More specifically, we consider the following model:
\begin{eqnarray}
&&\hskip-.8in
(u-\alpha^2  u_{zz})_t  -\nu u_{zz} + u\, u_x + w u_z +\ee_1 u - f_0 v + p_x = 0,  \label{EQO1-1}  \\
&&\hskip-.8in
(v-\alpha^2  v_{zz})_t  -\nu v_{zz} + u\, v_x + w v_z +\ee_1 v + f_0 u = 0,  \label{EQO1-2}  \\
&&\hskip-.8in
\ee_2 w + p_z  +  T =0,    \label{EQO1-3}  \\
&&\hskip-.8in
u_x + w_z =0,   \label{EQO1-4} \\
&&\hskip-.8in
T_t - \kappa \Dd T   + u \, T_x + w \, T_z =  0, \label{EQO1-5}
\end{eqnarray}
in the horizontal channel $\{(x,z): 0\leq z\leq H, x\in \mathbb{R}\}$, with boundary conditions (\ref{OBC}).
We will show the global well-posedness of system (\ref{EQO1-1})--(\ref{EQO1-5}) with $\nu=0$. The same results hold for $\nu>0$. Based on this, we show the convergence of the strong solution of system (\ref{EQO1-1})--(\ref{EQO1-5}) to the corresponding solution of system (\ref{EQO-1})--(\ref{EQO-5})  on the interval of existence of the latter, as $\alpha \rightarrow 0$. Furthermore, we derive a criterion for finite-time blow-up of system (\ref{EQO-1})--(\ref{EQO-5}) based on this Voigt $\alpha$-regularization. For more details of Voigt $\alpha$-regularization of the $3D$ Euler equations, we refer the reader to \cite{CLT06,LPTW18,LT10}.

The paper is organized as follows. In section 2, we introduce some notations and collect some preliminary results which will be used in the rest of this paper. In section 3, we show the local well-posedness and global well-posedness with small initial data of system (\ref{EQO-1})--(\ref{EQO-5}). In section 3.3, we will see by assuming $f_0=0$, $v\equiv 0$ and $T\equiv 0$, we are reduced to the 2D hydrostatic Navier-Stokes equations (\ref{EQ-1-4})--(\ref{EQ-3-4}) with damping, for which we can obtain local well-posedness by requiring less on the initial conditions. In section 4, we show the global well-posedness of system (\ref{EQO1-1})--(\ref{EQO1-5}). Again, we provide the similar result by requiring less on the initial conditions for the case when $f_0=0$, $v\equiv 0$ and $T\equiv 0$. In section 5, we show the convergence of the strong solution of system (\ref{EQO1-1})--(\ref{EQO1-5}) to the corresponding solution of system (\ref{EQO-1})--(\ref{EQO-5}) on the interval of existence of the latter, as $\alpha \rightarrow 0$, with $f_0=0$, $v\equiv 0$ and $T\equiv 0$. The reason by assuming $f_0=0$, $v\equiv 0$ and $T\equiv 0$ is for sake of simplicity, and we can get the same convergence result without this assumption. Finally, in section 6, we derive criterion for finite-time blow-up of system (\ref{EQO-1})--(\ref{EQO-5}), with $f_0=0$, $v\equiv 0$ and $T\equiv 0$, based on this Voigt $\alpha$-regularization.


\section{Preliminaries}
In this section, we introduce some notations and collect some preliminary results which will be used in the rest of this paper. For domain $\Om \subset \mathbb{R}^2$, we denote by $L^p(\Om)$, for $p\geq 1$, the Lebesgue space of functions $f$ satisfying $\int_{\Om} |f|^p dxdz < \infty$, and denote the corresponding norm by $\|f \|_{L^p}:=\|f\|_{L^p(\Om)} = (\int_{\Om} |f|^p dxdz)^{\frac{1}{p}}$. For the space $L^2(\Om)$, we denote its inner product by $(\cdot,\cdot)$. Similarly, for $m\geq 1$ an integer, we denote by $H^m(\Om)=W^{m,2}(\Om)$ the Sobolev space of functions $f$ satisfying $\sum\limits_{|\alpha|\leq m}\|D^\alpha f\|^2_{L^2} < \infty$, and denote the corresponding norm by $\|f \|_{H^m}:= (\sum\limits_{|\alpha|\leq m}\|D^\alpha f\|_{L^2}^2)^{\frac{1}{2}}$. Given time $\mathcal{T}>0$, we denote by $L^p(0,\mathcal{T};X)$ the space of functions $f: [0,T]\rightarrow X$ satisfying $\int_0^\mathcal{T} \|f(t)\|_X^p dt < \infty$, where $X$ is a Banach space and $\|\cdot\|_X$ represents its norm. Similarly, we denote by $C([0,\mathcal{T}];X)$ the space of continuous functions $f: [0,T]\rightarrow X$. We write $L^p(0,\mathcal{T};L^2)$ and $L^p(0,\mathcal{T};H^m)$ instead of $L^p(0,\mathcal{T};L^2(\Om))$ and $L^p(0,\mathcal{T};H^m(\Om))$, respectively, for simplicity. When $\Om = \mathbb{T}^2$ is the unit two-dimensional flat torus, we denote by $L^2(\mathbb{T}^2), H^m(\mathbb{T}^2)$ the set of all periodic functions in $x\in \mathbb{R}$ and $z\in \mathbb{R}$ with period 1, which have bounded $L^2(\mathbb{T}^2)$ norm or $H^m(\mathbb{T}^2)$ norm, respectively. We start by recalling the following:

\begin{lemma} (cf. \cite{CW11})
	Assume that $f,g,h,g_z,h_x \in L^2(\mathbb{T}^2)$. Then
	\begin{eqnarray*}
		&&\hskip-.68in \int_{\mathbb{T}^2} |fgh|dxdz \leq C \|f\|_{L^2} \|g\|_{L^2}^{\frac{1}{2}} (\|g\|_{L^2}^{\frac{1}{2}}+ \|g_z\|_{L^2}^{\frac{1}{2}}) \|h\|_{L^2}^{\frac{1}{2}} (\|h\|_{L^2}^{\frac{1}{2}}+\|h_x\|_{L^2}^{\frac{1}{2}}).
	\end{eqnarray*}

\end{lemma}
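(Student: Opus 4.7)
The estimate is an anisotropic Ladyzhenskaya/Agmon-type inequality in which the partial derivatives $g_z$ and $h_x$ each control their respective factor in only one direction. My plan is to carry out one-dimensional Sobolev embeddings slice by slice, making sure that $h$ absorbs the $L^\infty_x$ loss and $g$ absorbs the $L^\infty_z$ loss. The first step is to apply H\"older's inequality in the $x$-variable with the pairing $L^2_x\cdot L^2_x\cdot L^\infty_x$, putting $h(\cdot,z)$ into $L^\infty_x$ and keeping $f(\cdot,z)$ and $g(\cdot,z)$ in $L^2_x$. Integrating the result in $z$ and applying Cauchy--Schwarz in $z$ to pull off $\|f\|_{L^2}$ reduces the problem to bounding
\[
\int_{\mathbb{T}} \|g(\cdot,z)\|_{L^2_x}^2\,\|h(\cdot,z)\|_{L^\infty_x}^2\,dz.
\]

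For this remaining integral I would control $\|g(\cdot,z)\|_{L^2_x}^2$ uniformly in $z$ by the one-dimensional Agmon-type embedding
\[
\|g\|_{L^\infty_z L^2_x}^2 \leq C\|g\|_{L^2}\bigl(\|g\|_{L^2}+\|g_z\|_{L^2}\bigr),
\]
obtained from the fundamental theorem of calculus applied to the scalar function $z\mapsto\|g(\cdot,z)\|_{L^2_x}^2$, followed by averaging in the base point and a Cauchy--Schwarz estimate. For the $L^\infty_x$ factor I would invoke the standard 1D Agmon inequality in $x$, namely
\[
\|h(\cdot,z)\|_{L^\infty_x}^2 \leq C\|h(\cdot,z)\|_{L^2_x}\bigl(\|h(\cdot,z)\|_{L^2_x}+\|h_x(\cdot,z)\|_{L^2_x}\bigr),
\]
then integrate in $z$ and apply Cauchy--Schwarz once more to obtain
\[
\int_{\mathbb{T}} \|h(\cdot,z)\|_{L^\infty_x}^2\,dz \leq C\|h\|_{L^2}\bigl(\|h\|_{L^2}+\|h_x\|_{L^2}\bigr).
\]

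Combining the three pieces, taking a square root, and using the elementary inequality $(a+b)^{1/2}\leq a^{1/2}+b^{1/2}$ for non-negative $a,b$ to split the sums produces the asserted bound. There is no genuine obstacle here: the entire argument is a slice-by-slice exercise in 1D Sobolev embeddings. The one point that requires care is the choice of the initial H\"older pairing, which must match $h$ against $L^\infty_x$ and $g$ against $L^\infty_z$ rather than the other way around, since $g_z$ and $h_x$ are the only derivatives available; any other permutation fails to yield the stated inequality.
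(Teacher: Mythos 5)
Your proposal is correct and follows essentially the same route as the paper: H\"older in $x$ with $h$ placed in $L^\infty_x$, the one-dimensional Agmon inequality in $x$ for $h$, Cauchy--Schwarz in $z$ to extract $\|f\|_{L^2}$, and an $L^\infty_z L^2_x$ bound on $g$ in terms of $\|g\|_{L^2}$ and $\|g_z\|_{L^2}$. The only (immaterial) difference is that you obtain the bound on $\sup_z\|g(\cdot,z)\|_{L^2_x}$ by the fundamental theorem of calculus applied to $z\mapsto\|g(\cdot,z)\|_{L^2_x}^2$ with averaging over the base point, whereas the paper swaps $\sup_z$ and $\int_x$ via Minkowski's inequality and then applies the pointwise one-dimensional Agmon inequality in $z$.
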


\begin{proof}
	First, recall that by one-dimensional Agmon's inequality (or Gagliardo--Nirenberg interpolation inequality), for $\phi\in H^1(0,1)$, one has
	\begin{eqnarray}
		&&\hskip-.68in
		\|\phi\|_{L^\infty(0,1)} \leq C\left( \|\phi\|_{L^{2}(0,1)} + \|\phi\|_{L^{2}(0,1)} ^\frac{1}{2} \|\phi_x\|_{L^{2}(0,1)} ^\frac{1}{2} \right). \label{Agmon}
	\end{eqnarray}
Therefore, by H\"older's inequality and Agmon's inequality (\ref{Agmon}),
	\begin{eqnarray}
	&&\hskip-.68in
	\int_{\mathbb{T}^2}|fgh|(x,z)dxdz \leq C \int_0^1 \left[\Big(\int_0^1 |f(x,z)|^2 dx\Big)^{\frac{1}{2}} \Big(\int_0^1 |g(x,z)|^2 dx\Big)^{\frac{1}{2}} \Big(\sup_{0\leq x \leq 1} |h(x,z)|\Big)\right] dz \nonumber \\
	&&\hskip.15in
	\leq C \int_0^1 \Bigg\{\Big(\int_0^1 |f(x,z)|^2 dx\Big)^{\frac{1}{2}} \Big(\int_0^1 |g(x,z)|^2 dx\Big)^{\frac{1}{2}} \nonumber\\
	&&\hskip.85in
	\left[ \Big(\int_0^1 |h(x,z)|^{2} dx\Big)^{\frac{1}{4}}  \Big(\int_0^1 |h_x(x,z)|^{2} dx\Big)^{\frac{1}{4}} + \Big(\int_0^1 |h(x,z)|^{2} dx\Big)^{\frac{1}{2}} \right]\Bigg\} dz \nonumber \\
	&&\hskip.15in
	\leq C \|f\|_{L^2} \sup_{0\leq z \leq 1} \Big(\int_0^1 |g(x,z)|^2 dx\Big)^{\frac{1}{2}}  \|h\|_{L^2}^{\frac{1}{2}} (\|h\|_{L^2}^{\frac{1}{2}}+\|h_x\|_{L^2}^{\frac{1}{2}}). \label{L2-1}
	\end{eqnarray}
	By Minkowski's inequality, Agmon's inequality (\ref{Agmon}), and H\"older inequality,
	\begin{eqnarray}
	&&\hskip-.68in
	\sup_{0\leq z \leq 1} \Big(\int_0^1 |g(x,z)|^2 dx\Big)^{\frac{1}{2}}  \leq C \Big(\int_0^1 \sup_{0\leq z \leq 1}|g(x,z)|^2  dx\Big)^{\frac{1}{2}} \nonumber \\
	&&\hskip 0.3in
	\leq C \left(\int_0^1 \left[\Big(\int_0^1 |g(x,z)|^2 dz\Big)^{\frac{1}{2}}\Big(\int_0^1 |g_z(x,z)|^2 dz\Big)^{\frac{1}{2}} + \int_0^1 |g(x,z)|^2 dz\right]  dx \right)^{\frac{1}{2}} \nonumber \\
	&&\hskip 0.3in
	\leq C \|g\|_{L^2}^{\frac{1}{2}} (\|g\|_{L^2}^{\frac{1}{2}}+ \|g_z\|_{L^2}^{\frac{1}{2}}). \label{L2-2}
	\end{eqnarray}
	Inserting (\ref{L2-2}) to (\ref{L2-1}) yields the desired inequality.
\end{proof}

Next we prove the following:

\begin{lemma}
	Assume that $f\in H^1(\mathbb{T}^2)$ and $f_{xz}\in L^2(\mathbb{T}^2)$. Then $f\in L^\infty(\mathbb{T}^2)$. Moreover, $$\|f\|_{L^\infty} \leq C \left(\|f\|_{H^1}^2 + \|f_{xz}\|_{L^2}^2\right)^{\frac{1}{2}}.$$
\end{lemma}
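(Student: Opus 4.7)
The plan is to establish a two-dimensional anisotropic Agmon-type inequality by iterating the one-dimensional Agmon inequality \eqref{Agmon}: first in the $z$ variable, then in the $x$ variable. The hypothesis $f_{xz}\in L^2(\mathbb{T}^2)$ is exactly what is needed to run Agmon in the second direction.

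First, for each fixed $x$ the slice $z\mapsto f(x,z)$ lies in $H^1(0,1)$, so \eqref{Agmon} together with Young's inequality gives
$$
\sup_{0\leq z\leq 1}|f(x,z)|^2 \;\leq\; C\bigl(\|f(x,\cdot)\|_{L^2_z}^2+\|f_z(x,\cdot)\|_{L^2_z}^2\bigr) \;=:\; C\bigl(F(x)+G(x)\bigr).
$$
Taking the supremum over $x$ reduces the problem to
$
\|f\|_{L^\infty(\mathbb{T}^2)}^2 \leq C\bigl(\sup_x F(x)+\sup_x G(x)\bigr).
$

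Next I would apply \eqref{Agmon} in the $x$ variable to the functions $\tilde F(x):=\|f(x,\cdot)\|_{L^2_z}$ and $\tilde G(x):=\|f_z(x,\cdot)\|_{L^2_z}$. Differentiating under the integral (or regularizing via $\sqrt{F+\delta}$ to cover the zero set of $\tilde F$, then sending $\delta\to 0$) shows $|\tilde F'(x)|\leq \|f_x(x,\cdot)\|_{L^2_z}$ and $|\tilde G'(x)|\leq \|f_{xz}(x,\cdot)\|_{L^2_z}$. In particular $\tilde F,\tilde G\in H^1(0,1)$ with
$$
\|\tilde F\|_{L^2_x}=\|f\|_{L^2},\quad \|\tilde F'\|_{L^2_x}\leq \|f_x\|_{L^2}, \quad \|\tilde G\|_{L^2_x}=\|f_z\|_{L^2},\quad \|\tilde G'\|_{L^2_x}\leq \|f_{xz}\|_{L^2}.
$$
Applying \eqref{Agmon} to $\tilde F$ and $\tilde G$ and then squaring yields
$$
\sup_x F(x)\leq C\bigl(\|f\|_{L^2}^2+\|f\|_{L^2}\|f_x\|_{L^2}\bigr), \qquad \sup_x G(x)\leq C\bigl(\|f_z\|_{L^2}^2+\|f_z\|_{L^2}\|f_{xz}\|_{L^2}\bigr).
$$
A final application of Young's inequality absorbs the cross terms into $\|f\|_{H^1}^2+\|f_{xz}\|_{L^2}^2$, which together with the reduction from the previous paragraph gives the stated bound.

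The only mildly delicate point is verifying that $\tilde F$ and $\tilde G$ are genuinely Sobolev-regular in $x$ at points where they vanish; this is handled by the standard $\sqrt{F+\delta}$ mollification followed by $\delta\to 0$. The remainder is a routine iteration of Agmon's inequality, and the role of the mixed-derivative hypothesis is precisely to supply the $L^2_x$-integrability of the $x$-derivative of $\tilde G$ required for the second application of \eqref{Agmon}.
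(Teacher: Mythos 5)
Your proof is correct, but it takes a genuinely different route from the paper's. The paper expands $f$ in a double Fourier series and applies the Cauchy--Schwarz inequality with the weight $(1+k_1^2+k_2^2+k_1^2k_2^2)^{1/2}$, exploiting the factorization $1+k_1^2+k_2^2+k_1^2k_2^2=(1+k_1^2)(1+k_2^2)$ so that the dual sum $\sum_{k}(1+k_1^2)^{-1}(1+k_2^2)^{-1}$ converges; this is a three-line argument which in fact yields absolute convergence of the Fourier series (hence continuity of $f$ after modification on a null set), but it is tied to the periodic setting. You instead iterate the one-dimensional Agmon inequality (\ref{Agmon}) anisotropically: first in $z$ on each slice, then in $x$ applied to the slice norms $\tilde F(x)=\|f(x,\cdot)\|_{L^2_z}$ and $\tilde G(x)=\|f_z(x,\cdot)\|_{L^2_z}$, whose weak $x$-derivatives are dominated by $\|f_x(x,\cdot)\|_{L^2_z}$ and $\|f_{xz}(x,\cdot)\|_{L^2_z}$ respectively; the computation closes correctly after Young's inequality, and your identification of the mixed derivative $f_{xz}$ as precisely what powers the second Agmon application is the right structural observation. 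The one technical point --- weak differentiability of $\tilde F,\tilde G$ at their zeros --- is adequately handled by the $\sqrt{F+\delta}$ regularization you describe (or, even more simply, by proving the estimate for trigonometric polynomials and concluding by density, since the right-hand side controls exactly the norms in which one approximates). What each approach buys: yours is longer but more portable, working on a product domain without periodicity and making the anisotropic structure of the hypothesis transparent, whereas the paper's Fourier argument is shorter and slightly stronger in the periodic case.
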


\begin{proof} Let $\{\hat{f_k}\}_{k\in \mathbb{Z}^2}$ be the Fourier coefficients of $f$. By Cauchy--Schwarz inequality, we have
     	\begin{eqnarray*}
     		&&\hskip-.68in \|f\|_{L^\infty} \leq \sum\limits_{k\in \mathbb{Z}^2} |\hat{f_k}| = \sum\limits_{k\in \mathbb{Z}^2} \frac{|\hat{f_k}|(1+k_1^2+k_2^2+k_1^2 k_2^2)^{\frac{1}{2}}}{(1+k_1^2+k_2^2+k_1^2 k_2^2)^{\frac{1}{2}}}\\
     		&&\hskip-.68in \leq \left(\sum\limits_{k\in \mathbb{Z}^2} |\hat{f_k}|^2(1+k_1^2+k_2^2+k_1^2 k_2^2)\right)^{\frac{1}{2}} \left(\sum\limits_{k\in \mathbb{Z}^2} \frac{1}{(1+k_1^2)(1+k_2^2)}\right)^{\frac{1}{2}}\leq C \Big(\|f\|_{H^1}^2 + \|f_{xz}\|_{L^2}^2\Big)^{\frac{1}{2}} < \infty.
     	\end{eqnarray*}
     	Therefore, $f\in L^\infty(\mathbb{T}^2)$.
\end{proof}

We also need the following Aubin-Lions theorem.

\begin{proposition} (Aubin-Lions Lemma, cf. Simon \cite{SIMON} Corollary 4)
	Assume that X, B and Y are three Banach spaces, with $X\hookrightarrow \hookrightarrow B \hookrightarrow Y$. Then it holds that\\
	
	(i) If $\mathcal{F}$ is a bounded subset of $L^p(0,\mathcal{T};X)$, where $1\leq p < \infty$, and $\mathcal{F}_t := \{\frac{\partial f}{\partial t}| f\in \mathcal{F}\}$ is bounded in $L^1(0,\mathcal{T}; Y)$, then $\mathcal{F}$ is relative compact in $L^p(0,\mathcal{T};B)$.\\
	
	(ii) If $\mathcal{F}$ is a bounded subset of $L^\infty(0,\mathcal{T};X)$ and $\mathcal{F}_t $ is bounded in $L^q(0,\mathcal{T}; Y)$, where $q>1$, then $\mathcal{F}$ is relative compact in $C([0,\mathcal{T}];B)$.
\end{proposition}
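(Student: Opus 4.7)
The plan is to derive the two compactness conclusions from, respectively, the Fréchet--Kolmogorov criterion for relative compactness in $L^p(0,\mathcal{T};B)$ and from Arzelà--Ascoli in $C([0,\mathcal{T}];B)$. The key preliminary step that converts the compact embedding $X\hookrightarrow\hookrightarrow B$ into a usable quantitative tool is the Ehrling--Lions inequality: for every $\eta>0$ there is a constant $C_\eta>0$ such that
\begin{equation*}
\|u\|_B \leq \eta\,\|u\|_X + C_\eta \|u\|_Y \qquad \text{for every } u\in X.
\end{equation*}
I would prove this by contradiction: if it fails for some $\eta_0>0$, a normalized sequence $\{u_n\}\subset X$ with $\|u_n\|_X=1$ and $\|u_n\|_B \geq \eta_0+n\|u_n\|_Y$ admits a $B$-convergent subsequence by the compact embedding, whose limit has $B$-norm $\geq\eta_0$; simultaneously $\|u_n\|_Y\to 0$ because $\|u_n\|_B$ is bounded (by the continuous embedding $X\hookrightarrow B$), and $B\hookrightarrow Y$ then forces the limit to vanish, a contradiction.

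For part (i), I would verify the two hypotheses of the Fréchet--Kolmogorov theorem applied to $\mathcal{F}\subset L^p(0,\mathcal{T};B)$. Uniform $L^p(B)$-boundedness is immediate from $L^p(X)$-boundedness and the continuous embedding $X\hookrightarrow B$. For uniform smallness of time translations, apply the Ehrling--Lions inequality pointwise in time to $f(t+h)-f(t)$ and integrate:
\begin{equation*}
\int_0^{\mathcal{T}-h}\|f(t+h)-f(t)\|_B^p\,dt \leq C_p\,\eta^p\,\|f\|_{L^p(X)}^p + C_p\,C_\eta^p \int_0^{\mathcal{T}-h}\|f(t+h)-f(t)\|_Y^p\,dt.
\end{equation*}
The first term is made arbitrarily small by choosing $\eta$ small, uniformly in $f\in\mathcal{F}$. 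For the second, using $f'\in L^1(0,\mathcal{T};Y)$ one has $\|f(t+h)-f(t)\|_Y\leq\int_t^{t+h}\|f'(s)\|_Y\,ds$, which tends to $0$ in $L^1(0,\mathcal{T}-h)$ as $h\downarrow 0$ by a Fubini argument; since simultaneously $\|f(t+h)-f(t)\|_Y$ is bounded pointwise by $C(\|f(t+h)\|_X+\|f(t)\|_X)$, which lies in a bounded subset of $L^p$ (via $X\hookrightarrow Y$), an equiintegrability / Vitali-type argument upgrades the $L^1$-smallness to $L^p$-smallness.

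For part (ii), the assumption $f'\in L^q(Y)$ with $q>1$ yields the pointwise Hölder bound $\|f(t+h)-f(t)\|_Y\leq h^{1-1/q}\|f'\|_{L^q(Y)}$, hence uniform equicontinuity of $\mathcal{F}$ in $Y$. Feeding this into Ehrling--Lions together with the $L^\infty(X)$-boundedness of $\mathcal{F}$ transfers the equicontinuity from $Y$ to $B$; combined with pointwise $B$-precompactness of $\{f(t):f\in\mathcal{F}\}$ at each $t$ (furnished by the compact embedding $X\hookrightarrow\hookrightarrow B$), Arzelà--Ascoli yields relative compactness in $C([0,\mathcal{T}];B)$. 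The main obstacle is in part (i): the mismatch between the $L^1(Y)$ control of derivatives and the desired $L^p(B)$-smallness of translations when $p>1$. A naive Hölder estimate on $\int_t^{t+h}\|f'(s)\|_Y\,ds$ would require $L^p(Y)$ integrability of $f'$, which is not assumed, so the argument must exploit the ambient $L^p(X)$-boundedness of $\mathcal{F}$ to supply the equiintegrability needed to pass from $L^1$ to $L^p$ smallness. This is the technical core of Simon's proof and what distinguishes the $L^1$-in-time setting from the easier higher-integrability case.
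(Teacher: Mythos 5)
The paper does not prove this proposition---it is quoted verbatim from Simon \cite{SIMON} (Corollary 4)---so your attempt can only be judged on its own merits. Your Ehrling--Lions lemma and its contradiction proof are correct, and your treatment of part (ii) (H\"older bound $\|f(t+h)-f(t)\|_Y\le h^{1-1/q}\|f'\|_{L^q(Y)}$, transfer of equicontinuity from $Y$ to $B$ via Ehrling and the $L^\infty(X)$ bound, then Arzel\`a--Ascoli) is the standard and essentially complete argument. Part (i), however, has two genuine gaps. First, the compactness criterion you invoke is incomplete: in an infinite-dimensional $B$, boundedness in $L^p(0,\mathcal{T};B)$ plus uniform smallness of time translations does \emph{not} imply relative compactness (take $f_n(t)\equiv e_n$ for an orthonormal basis $\{e_n\}$: translations vanish identically, yet there is no convergent subsequence). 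Simon's characterization requires in addition that the averages $\bigl\{\int_{t_1}^{t_2}f\,dt : f\in\mathcal{F}\bigr\}$ be relatively compact in $B$ for all $0<t_1<t_2<\mathcal{T}$. This extra condition does follow at once from the $L^p(0,\mathcal{T};X)$-bound and $X\hookrightarrow\hookrightarrow B$, but it must be stated and checked; it is where the compact embedding enters beyond Ehrling.

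Second, and more seriously, your upgrade from $L^1$- to $L^p$-smallness of the translations in $Y$ does not work as described. For compactness you need $\int_0^{\mathcal{T}-h}\|f(t+h)-f(t)\|_Y^p\,dt\to 0$ \emph{uniformly over} $f\in\mathcal{F}$, and Vitali's theorem based on the majorant $C(\|f(t+h)\|_X+\|f(t)\|_X)$ only yields convergence for each fixed $f$: boundedness of $\{\|f(\cdot)\|_X^p\}_{f\in\mathcal{F}}$ in $L^1(0,\mathcal{T})$ does not give uniform integrability of that family, so there is no uniformity in $f$. (Nor can one interpolate $L^1$-smallness with $L^p$-boundedness at the endpoint exponent $p$: $g_h=h^{-1/p}\chi_{[0,h]}$ has $\|g_h\|_{L^1}\to 0$ and $\|g_h\|_{L^p}=1$.) The correct, and simpler, device is to set $G_h(t):=\int_t^{t+h}\|f'(s)\|_Y\,ds$ and observe the \emph{uniform pointwise} bound $G_h(t)\le\|f'\|_{L^1(0,\mathcal{T};Y)}\le M$ together with $\|G_h\|_{L^1}\le h\,\|f'\|_{L^1(0,\mathcal{T};Y)}$ (Fubini), whence
\begin{equation*}
\int_0^{\mathcal{T}-h}\|f(t+h)-f(t)\|_Y^p\,dt\;\le\;\|G_h\|_{L^\infty}^{p-1}\,\|G_h\|_{L^1}\;\le\;M^{p-1}\cdot hM,
\end{equation*}
which tends to $0$ uniformly over $\mathcal{F}$. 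With this replacement, and with the averaged-compactness condition added, your outline becomes Simon's proof.
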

\section{Well-posedness of system (\ref{EQO-1})--(\ref{EQO-5})}   \label{S-2}

In this section we study the
system (\ref{EQO-1})--(\ref{EQO-5}) in the horizontal channel $\{(x,z): 0\leq z\leq H, x\in \mathbb{R}\}$. We complement this system with the boundary conditions (\ref{OBC})
and the initial condition
\begin{eqnarray}
&&\hskip-.8in
(u, v, T)|_{t=0}=(u_0, v_0, T_0). \label{OIC}
\end{eqnarray}
In particular, without loss of generality, we choose $H=\frac{1}{2}$. Instead of considering this physical problem, in this section, we consider another problem, that is, system (\ref{EQO-1})--(\ref{EQO-5}) in the unit two dimensional torus $\mathbb{T}^2$, subject to the following symmetric boundary conditions and initial conditions:
\begin{eqnarray}
&&\hskip-.8in
u, \;v, \; w,\; p \; \; \text{and} \; T \; \text{are periodic in} \; x \;  \text{and} \; z \;  \text{with period 1}; \label{OBC-1} \\
&&\hskip-.8in
u, \; v,\; p \; \;\text{are even in} \; z, \;\text{and}\; w, \; T \;\text{are odd in z}; \label{OBC-2} \\
&&\hskip-.8in
(u, v, T)|_{t=0}=(u_0, v_0, T_0).\label{OIC-1}
\end{eqnarray}
The periodicity and symmetry are valid due to the fact that the periodic subspace $\mathcal{H}$, give by

$$\mathcal{H}:= \{(u,v,w,p,T) \; |\; u,\;v,\;w,\;p \;\; \text{and} \; T \; \text{are spatially periodic in both} \; x \;\text{and}  \; z \; \text{variables with period 1},$$
		$$ \text{and are even, even, odd, even, odd with respect to} \; z \; \text{variable, respectively} \},$$
is invariant under the evolution system (\ref{EQO-1})--(\ref{EQO-5}). After solving this problem in the flat torus, the solution restricted on original horizontal channel $\{(x,z): 0\leq z\leq \frac{1}{2}, x\in \mathbb{R}\}$ will solve the original physical problem with corresponding boundary conditions (\ref{OBC}) and initial conditions (\ref{OIC}).
\subsection{Reformulation of The Problem.}
First, let us reformulate the system (\ref{EQO-1})--(\ref{EQO-5}) by deriving equations for $w, p_x$ and $p_z$ in terms of $u,v$ and $T$. For the sake of simplicity, we drop the argument $t$ in functions when there is no confusion. First, from (\ref{EQO-4}) and by boundary condition (\ref{OBC-2}), i.e., $w(x,0)=0$, we have
\begin{eqnarray}
	&&\hskip-.8in
	w(x,z)=-\int_0^z u_x(x,s)ds. \label{w}
\end{eqnarray}
From (\ref{EQO-3}) and (\ref{w}), we have
\begin{eqnarray}
&&\hskip-.38in
p_z(x,z)=-T(x,z)-\ee_2 w(x,z) = -T(x,z) + \ee_2 \int_0^z u_x(x,s)ds. \label{pz}
\end{eqnarray}
Next, we will derive equation for $p_x$. Notice that since $w(x,0)=w(x,1)=0$, from (\ref{w}), one has the compatibility condition
\begin{eqnarray}
&&\hskip-.38in
\int_0^1 u_x(x,z)dz=0. \label{com}
\end{eqnarray}
Let us denote by $c(t):=\int_0^1 u(x,z,t)dz$ and $d(x,t):=\int_0^1 v(x,z,t)dz$.
Integrating (\ref{EQO-1}) with respect to $z$ over $(0,1)$, using boundary condition (\ref{OBC-1}) and (\ref{OBC-2}), one has:
\begin{eqnarray*}
&&\hskip-.58in
\dot{c}(t)+\ee_1 c(t)+\int_0^1 \Big(uu_x(x,z) + wu_z(x,z) + p_x(x,z)\Big)dz
 = f_0 d(x,t).
\end{eqnarray*}
By integration by parts and using (\ref{EQO-4}), (\ref{OBC-1}) and (\ref{OBC-2}), we get
\begin{eqnarray}
	&&\hskip-.58in
\dot{c}(t)+ \ee_1 c(t) + \int_0^1 \Big((u^2)_x(x,z) + p_x(x,z)\Big) dz = f_0 d(x,t). \label{c-1}
\end{eqnarray}
Integrating (\ref{c-1}) with respect to $x$ over $(0,1)$, using compatibility condition (\ref{com}), we have
\begin{eqnarray*}
&&\hskip-.8in
\dot{c}(t) + \ee_1 c(t)+\int_0^1 \int_0^1 \Big((u^2)_x(x,z) + p_x(x,z)\Big)dxdz = f_0 \int_0^1 d(x,t) dx.
\end{eqnarray*}
Thanks to (\ref{OBC-1}), we have
\begin{eqnarray}
	&&\hskip-.8in
	\dot{c}(t) + \ee_1 c(t)= f_0 \int_0^1 d(x,t) dx. \label{c}
\end{eqnarray}
Plugging (\ref{c}) back into (\ref{c-1}) yields
\begin{eqnarray}
&&\hskip-.58in
\int_0^1 p_x(x,z)dz = f_0 \int_0^1  v(x,z) dz - f_0 \int_0^1 \int_0^1 v(x,z)  dxdz  - \int_0^1 2uu_x(x,z) dz. \label{P-1}
\end{eqnarray}
Next, from (\ref{w}) and (\ref{pz}), we have
\begin{eqnarray}
&&\hskip-.58in
p(x,z)=p_s(x)+\ee_2 \int_0^z\int_0^s u_x(x,\xi)d\xi ds-\int_0^zT(x,s)ds,  \label{P-2}
\end{eqnarray}
where $p_s(x) = p(x,0)$ is the pressure at $z=0$. By differentiating (\ref{P-2}) with respect to $x$, and integrating respect to $z$ over $(0,1)$, by virtue of (\ref{P-1}), we have
\begin{eqnarray}
&&\hskip-.8in
(p_s)_{x}(x)=\int_0^1 \Big[\int_0^{z'} T_x(x,s)ds-\ee_2 \int_0^{z'}\int_0^s u_{xx}(x,\xi)d\xi ds + f_0 v(x,z') -2uu_x(x,z')\Big]dz' \nonumber \\
&&\hskip-.28in
- f_0 \int_0^1 \int_0^1  v(x',z') dx'dz'. \label{P-3}
\end{eqnarray}
Therefore, by differentiating (\ref{P-2}) with respect to $x$, and using (\ref{P-3}), we have
\begin{eqnarray}
&&\hskip-.8in
p_{x}(x,z)= \ee_2 \int_0^z\int_0^s u_{xx}(x,\xi)d\xi ds-\int_0^zT_x(x,s)ds \nonumber \\
&&\hskip-.2in
+\int_0^1 \Big[\int_0^{z'} T_x(x,s)ds-\ee_2 \int_0^{z'}\int_0^s u_{xx}(x,\xi)d\xi ds  + f_0 v(x,z') -2uu_x(x,z')\Big]dz' \nonumber \\
&&\hskip-.2in
- f_0 \int_0^1 \int_0^1  v(x',z') dx'dz'. \label{px}
\end{eqnarray}
By virtue of (\ref{w}), (\ref{pz}) and (\ref{px}), and since $p$ is determined up to a constant, the unknowns for system (\ref{EQO-1})--(\ref{EQO-5}) are only $(u,v,T)$. Therefore, we reformulate system (\ref{EQO-1})--(\ref{EQO-5}) to the following system:
\begin{eqnarray}
&&\hskip-.8in
u_t -\nu u_{zz} + u\, u_x + w u_z +\ee_1 u - f_0 v + p_x = 0,  \label{EQ-1}  \\
&&\hskip-.8in
v_t -\nu v_{zz} + u\, v_x + w v_z +\ee_1 v + f_0 u = 0,  \label{EQ-2}  \\
&&\hskip-.8in
T_t - \kappa \Dd T   + u \, T_x + w \, T_z =  0, \label{EQ-5}
\end{eqnarray}
with $w,p_x,p_z$ defined by:
\begin{eqnarray}
&&\hskip-.8in
w(x,z):=-\int_0^z u_x(x,s)ds,  \label{EQw}  \\
&&\hskip-.8in
p_{x}(x,z):= \ee_2 \int_0^z\int_0^s u_{xx}(x,\xi)d\xi ds-\int_0^zT_x(x,s)ds \nonumber \\
&&\hskip-.2in
+\int_0^1 \Big[\int_0^{z'} T_x(x,s)ds-\ee_2 \int_0^{z'}\int_0^s u_{xx}(x,\xi)d\xi ds  + f_0 v(x,z') -2uu_x(x,z')\Big]dz' \nonumber \\
&&\hskip-.2in
- f_0 \int_0^1 \int_0^1  v(x',z') dx'dz', \label{EQpx}  \\
&&\hskip-.8in
p_z(x,z):= -T(x,z) + \ee_2 \int_0^z u_x(x,s)ds. \label{EQpz}
\end{eqnarray}
In this section, we are interested in system (\ref{EQ-1})--(\ref{EQ-5}) with (\ref{EQw})--(\ref{EQpz}) in the unit two dimensional torus $\Om=\mathbb{T}^2$, subject to the following symmetry boundary conditions and initial conditions:
\begin{eqnarray}
&&\hskip-.8in
u, \; v \; \text{and} \; T \;  \text{are periodic in} \;  x \; \text{and} \; z \; \text{with period 1}; \label{BC-1} \\
&&\hskip-.8in
u,\; v  \;  \text{are even in} \; z, \; \text{and} \; T \; \text{is odd in z}; \label{BC-2}  \\
&&\hskip-.8in
(u, v, T)|_{t=0}=(u_0, v_0, T_0).\label{IC-1}
\end{eqnarray}
It's worth mentioning again that our system (\ref{EQ-1})--(\ref{EQ-5}) with (\ref{EQw})--(\ref{EQpz}) satisfies the compatibility condition (\ref{com}).
By virtue of (\ref{EQw})--(\ref{EQpz}) and (\ref{BC-1}), (\ref{BC-2}), we obtain that $w, p$ also satisfy the symmetry conditions:
\begin{eqnarray}
&&\hskip-.8in
w \; \text{and} \;  p \;  \text{are periodic in} \; x \; \text{and} \; z \; \text{with period 1}; \label{wpBC-1} \\
&&\hskip-.8in
p \; \text{is even in} \; z,  \; \text{and} \; w \; \text{is odd in z}. \label{wpBC-2}
\end{eqnarray}
From (\ref{EQw}) and (\ref{EQpz}), and by differentiating (\ref{EQw}) with respect to $z$, we have
\begin{eqnarray}
&&\hskip-.8in
\ee_2 w + p_z  +  T =0,    \label{EQ-3}  \\
&&\hskip-.8in
u_x + w_z =0.   \label{EQ-4}
\end{eqnarray}
Therefore, we conclude system (\ref{EQ-1})--(\ref{EQ-5}) with (\ref{EQw})--(\ref{EQpz}) and subjects to (\ref{BC-1})--(\ref{IC-1}) is equivalent to original system (\ref{EQO-1})--(\ref{EQO-5}) subjects to (\ref{OBC-1})--(\ref{OIC-1}).

\subsection{Local Well-posedness}  \par
In this section, we will show the local regularity of strong solutions to the
system (\ref{EQ-1})--(\ref{EQ-5}) with (\ref{EQw})--(\ref{EQpz}), subjects to boundary and initial conditions (\ref{BC-1})--(\ref{IC-1}). First, we give the definition of strong solution to system (\ref{EQ-1})--(\ref{EQ-5}) with (\ref{EQw})--(\ref{EQpz}).

\begin{definition}
	Suppose that $u_0, v_0, T_0, \partial_x u_{0}, \partial_x  v_{0}, \partial_x  T_{0}\in H^1(\mathbb{T}^2)$ satisfy the symmetry conditions (\ref{BC-1}) and (\ref{BC-2}), with the compatibility condition $\int_0^1 \partial_x  u_{0} dz = 0$. Given time $\mathcal{T}>0$, we say $(u,v,T)$ is a strong solution to system (\ref{EQ-1})--(\ref{EQ-5}) with (\ref{EQw})--(\ref{EQpz}), subjects to (\ref{BC-1})--(\ref{IC-1}), on the time interval $[0,\mathcal{T}]$, if \\
	
	(i) u, v and T satisfy the symmetry conditions (\ref{BC-1}) and (\ref{BC-2});\\
	
	(ii) u, v and T have the regularities
	\begin{eqnarray*}
		&&\hskip-.28in
		u, v, T, u_x, v_x, T_x \in L^{\infty}(0,\mathcal{T};H^1), \;\; u_z,v_z,u_{xz},v_{xz} \in L^2(0,\mathcal{T};H^1), \;\; T,T_x \in L^2(0,\mathcal{T};H^2) , \\
		&&\hskip-.28in
		u,v,T\in L^{\infty}(0,\mathcal{T};L^\infty)\cap C([0,\mathcal{T}];L^2),\;\; \nabla u,\nabla v, \nabla T\in L^{2}(0,\mathcal{T};L^\infty),\;\; \pp_t u, \pp_t v, \pp_t T \in L^2(0,\mathcal{T};L^2);
	\end{eqnarray*}
	
	(iii) u, v and T satisfy system (\ref{EQ-1})--(\ref{EQ-5}) in the following sense:
	\begin{eqnarray*}
		&&\hskip-.68in
		\partial_t u -\nu u_{zz}+uu_x + wu_z+\ee_1 u -f_0 v + p_x = 0  \;\;  \text{in} \; \; L^2(0,\mathcal{T}; L^2);\\
		&&\hskip-.68in
		\partial_t v -\nu v_{zz}+uv_x + wv_z+\ee_1 v +f_0 u = 0  \;\;  \text{in} \; \; L^2(0,\mathcal{T}; L^2);\\
		&&\hskip-.68in
		\partial_t T -\kappa \Dd T +uT_x + wT_z= 0  \;\;  \text{in} \; \; L^2(0,\mathcal{T}; L^2),
	\end{eqnarray*}
	with $w,p_x,p_z$ defined by (\ref{EQw})--(\ref{EQpz}), and fulfill the initial condition (\ref{IC-1}).
\end{definition}
We have the following result concerning the existence and uniqueness of strong solutions to system (\ref{EQ-1})--(\ref{EQ-5}) with (\ref{EQw})--(\ref{EQpz}), subjects to (\ref{BC-1})--(\ref{IC-1}), on $\mathbb{T}^2 \times (0,\mathcal{T})$, for some positive time $\mathcal{T}$.

\begin{theorem} \label{T-MAIN}
	Suppose that $u_0, v_0, T_0, \partial_x  u_{0}, \partial_x  v_{0}, \partial_x  T_{0}\in H^1(\mathbb{T}^2)$ satisfy the symmetry conditions (\ref{BC-1}) and (\ref{BC-2}), with the compatibility condition $\int_0^1 \partial_x  u_{0} dz = 0$. Then there exists some time $\mathcal{T}>0$ such that there exists a unique strong solution $(u, v, T)$ of system
	(\ref{EQ-1})--(\ref{EQ-5}) with (\ref{EQw})--(\ref{EQpz}), subjects to (\ref{BC-1})--(\ref{IC-1}), on the interval $[0,\mathcal{T}]$. Moreover, the unique strong solution $(u, v, T)$ depends continuously on the initial data.
\end{theorem}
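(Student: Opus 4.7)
The plan is to construct the strong solution via a Faedo--Galerkin approximation that preserves the invariant symmetry subspace $\mathcal{H}$ described in Section~\ref{S-2}, derive uniform a priori bounds on a short interval $[0,\mathcal{T}]$, and pass to the limit by the Aubin--Lions lemma (Proposition~3). Uniqueness and continuous dependence then follow from an $L^2$ energy estimate for the difference of two solutions. Throughout, the anisotropic trilinear inequality of Lemma~1 is the main tool for controlling the cubic nonlinearities in the absence of horizontal viscosity.

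The a priori estimates split naturally into three layers. First, a basic $L^2$ identity: pairing (\ref{EQ-1})--(\ref{EQ-2}) with $(u,v)$, the transport terms vanish by $u_x+w_z=0$, the Coriolis terms $\pm f_0(u,v)$ cancel, and a double integration by parts together with (\ref{EQ-3})--(\ref{EQ-4}) gives
\begin{equation*}
(p_x,u)=-(p,u_x)=(p,w_z)=-(p_z,w)=\ee_2\|w\|_{L^2}^2+(T,w),
\end{equation*}
which together with the parabolic identity for (\ref{EQ-5}) yields $(u,v,T)\in L^\infty_tL^2$ and $(u_z,v_z,w,\nabla T)\in L^2_tL^2$. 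Second, vertical $H^1$ bounds: testing with $-u_{zz}$, $-v_{zz}$, $-\Dd T$ produces the dissipations $\nu\|u_{zz}\|_{L^2}^2$, $\nu\|v_{zz}\|_{L^2}^2$, $\kappa\|\nabla^2 T\|_{L^2}^2$, and each cubic term is controlled by Lemma~1 followed by Young's inequality absorbing one factor into the dissipation. Third, horizontal $H^1$ bounds: differentiating the equations in $x$ and testing with $u_x,v_x,T_x$ and also with $-\pp_{zz}u_x$, $-\pp_{zz}v_x$, $-\Dd T_x$ supplies vertical dissipation of $u_{xz}$, $v_{xz}$, $\nabla T_x$. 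The outcome is a Riccati-type inequality
\begin{equation*}
\frac{d}{dt}Y(t)\le C\bigl(1+Y(t)\bigr)^{3},\qquad Y(t):=\|u\|_{H^1}^2+\|u_x\|_{H^1}^2+\|v\|_{H^1}^2+\|v_x\|_{H^1}^2+\|T\|_{H^1}^2+\|T_x\|_{H^1}^2,
\end{equation*}
from which a standard ODE comparison extracts a positive existence time $\mathcal{T}=\mathcal{T}(Y(0))$.

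The principal difficulty lies in the third layer: the pure horizontal derivative $u_{xx}$ is undissipated, yet it appears in the commutator $(uu_x)_x u_x=uu_{xx}u_x+u_x^3$ and, through the nonlocal formula (\ref{EQpx}), in $p_{xx}$. The saving structure is that, after integration by parts, the $\ee_2$-contribution of (\ref{EQpx}) to $(p_{xx},u_x)$ rearranges, via $w_z=-u_x$, into terms quadratic in $u_x$ and $w$ dominated by the dissipation $\nu\|u_{xz}\|_{L^2}^2$ and the baseline bound $\ee_2\|w\|_{L^2}^2$, while the genuinely cubic terms in $u,u_x,u_{xx}$ are split by Lemma~1 into anisotropic products that Young's inequality tames against $\nu\|u_{xz}\|_{L^2}^2$. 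For $T$, full diffusivity $\kappa\Dd T$ yields $\kappa\|\nabla T_x\|_{L^2}^2$, absorbing the corresponding nonlinear contributions.

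Once the Galerkin approximations satisfy the uniform bounds, Proposition~3 with $X=H^1$, $B=L^2$, and time-derivative bounds read off from the equations in $L^2(0,\mathcal{T};L^2)$ gives strong convergence in $L^2(0,\mathcal{T};L^2)$, sufficient to pass to the limit in all nonlinear and nonlocal terms and to identify the limit as a strong solution in the sense of the definition above. For uniqueness and continuous dependence, the difference $(\bar u,\bar v,\bar T)$ of two strong solutions satisfies a linear system whose coefficients involve the two solutions; testing with $(\bar u,\bar v,\bar T)$ and bounding the difference nonlinearities via Lemma~1 by $\|(\bar u,\bar v,\bar T)\|_{L^2}$ times higher-regularity norms of the two solutions yields, by Gronwall's lemma, $\|(\bar u,\bar v,\bar T)(t)\|_{L^2}\le e^{Ct}\|(\bar u,\bar v,\bar T)(0)\|_{L^2}$, which gives uniqueness and Lipschitz dependence on the initial data in $L^\infty(0,\mathcal{T};L^2)$.
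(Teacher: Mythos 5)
Your overall architecture (symmetry\nobreakdash-preserving Galerkin, a priori estimates closed by Lemma~1, Aubin--Lions, $L^2$ energy estimate for uniqueness) is the same as the paper's, and your first two layers and the uniqueness argument are sound. The gap is in the third layer. You test the $x$-differentiated momentum equations only with $u_x,v_x$ and $-\pp_{zz}u_x,-\pp_{zz}v_x$; these tests produce $\frac{d}{dt}\bigl(\norm{u_x}{L^2}[2]+\norm{u_{xz}}{L^2}[2]\bigr)$ and the dissipation $\nu\norm{u_{xzz}}{L^2}[2]$, but they never put $\norm{u_{xx}}{L^2}[2]$ or $\norm{v_{xx}}{L^2}[2]$ into the evolved energy, nor $\nu\norm{v_{xxz}}{L^2}[2]$ into the dissipation. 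Yet the very nonlinear terms generated in this layer force you to control these quantities: for instance $\pp_{xz}(u\,v_x)$ tested against $v_{xz}$ produces $\int_{\mathbb{T}^2} u_z\,v_{xx}\,v_{xz}\,dxdz$, and every application of Lemma~1 (or any integration by parts) to this term leaves a factor $\norm{v_{xx}}{L^2}$ or $\norm{v_{xxz}}{L^2}$ that your scheme does not bound. Your final functional $Y$ quietly includes $\norm{v_x}{H^1}[2]\supseteq\norm{v_{xx}}{L^2}[2]$, but the tests you list do not evolve that quantity, so the Riccati inequality you assert is not actually derived.

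For $u$ this can be repaired by the substitution $u_{xx}=-w_{xz}$ together with the $\ee_2$-damping of $w$ (this is exactly the paper's Remark~1 and its Section~3.3), but there is no incompressibility relation for $v$, so $v_{xx}$ must be placed in the energy by hand. The paper does this by additionally testing with $\Dd u_{xx}$, $\Dd v_{xx}$, $\Dd w_{xx}$, $\Dd T_{xx}$: this yields $\frac{d}{dt}\norm{\nabla v_x}{L^2}[2]$ (hence $L^\infty_t$ control of $v_{xx}$), the dissipation $\nu\norm{\nabla v_{xz}}{L^2}[2]\supseteq\nu\norm{v_{xxz}}{L^2}[2]$, and, from the hydrostatic equation, the extra weak dissipation $\ee_2\norm{\nabla w_x}{L^2}[2]\supseteq\ee_2\norm{w_{xx}}{L^2}[2]$ needed to absorb the worst terms (e.g.\ $w_{xx}u_z$ and $w_{xx}v_z$ in $I_2$, $I_5$). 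This is also precisely why the theorem assumes $\pp_xu_0,\pp_xv_0,\pp_xT_0\in H^1(\mathbb{T}^2)$ rather than merely $\pp^2_{xz}u_0\in L^2$: the full $H^1$ norm of the $x$-derivatives is the quantity that must be propagated. A secondary, more minor point: your description of the pressure contribution to the $x$-derivative level is slightly off --- the identity $(p_{xx},u_x)=-(p_{xz},w_x)=\ee_2\norm{w_x}{L^2}[2]+(T_x,w_x)$ shows it produces a \emph{new} positive damping term $\ee_2\norm{w_x}{L^2}[2]$ rather than being ``dominated by'' $\nu\norm{u_{xz}}{L^2}[2]$ and $\ee_2\norm{w}{L^2}[2]$.
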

In section 3.2.1, we establish the existence of solutions to system (\ref{EQ-1})--(\ref{EQ-5}) with (\ref{EQw})--(\ref{EQpz}) by employing the standard Galerkin approximation procedure. In section 3.2.2, we establish formal \textit{a priori} estimates for the solutions of system (\ref{EQ-1})--(\ref{EQ-5}) with (\ref{EQw})--(\ref{EQpz}). These estimates can be justified rigorously by deriving them first to the Galerkin approximation system and then passing to the limit using the Aubin-Lions compactness theorem. In section 3.2.3, we establish the uniqueness of strong solutions, and its continuous dependence on the initial data.

\subsubsection{Galerkin approximating system.}
In this section, we employ the standard Galerkin approximation procedure. Let
\begin{equation}
\phi_k = \phi_{k_1,k_2} :=
\begin{cases}
\sqrt{2}\exp\left(2\pi ik_1 x \right)\cos(2\pi k_2 z) & \text{if} \;  k_2\neq 0\\
\exp\left(2\pi ik_1 x \right) & \text{if} \;  k_2=0,  \label{phik}
\end{cases}
\end{equation}
\begin{equation}
\psi_k = \psi_{k_1,k_2} :=\sqrt{2} \exp\left(2\pi ik_1 x \right)\sin(2\pi k_2 z), \label{psik}
\end{equation}
and
\begin{eqnarray*}
	&&\hskip-.28in
	\mathcal{E}:=  \{ \phi \in L^2(\mathbb{T}^2) \; | \; \phi= \sum\limits_{k\in \mathbb{Z}^2} a_k \phi_k, \; a_{-k_1, k_2}=a_{k_1,k_2}^{*}, \; \sum\limits_{k\in \mathbb{Z}^2} |a_k|^2 < \infty \}, \\
	&&\hskip-.28in
	\mathcal{O}:= \{ \psi \in L^2(\mathbb{T}^2) \; | \; \psi= \sum\limits_{k\in \mathbb{Z}^2} a_k \psi_k, \; a_{-k_1, k_2}=a_{k_1,k_2}^{*}, \; \sum\limits_{k\in \mathbb{Z}^2} |a_k|^2 < \infty  \}.
\end{eqnarray*}
Observe that functions in $\mathcal{E}$ and $\mathcal{O}$ are even and odd with respect to $z$ variable, respectively. Moreover, $\mathcal{E}$ and $\mathcal{O}$ are closed subspace of $L^2(\mathbb{T}^2)$, orthogonal to each other and consist of real valued functions. For any $m\in \N$, denote by
\begin{eqnarray*}
	&&\hskip-.28in
	\mathcal{E}_m:=  \{ \phi \in L^2(\mathbb{T}^2)\; | \; \phi= \sum\limits_{|k|\leq m} a_k \phi_k, \; a_{-k_1, k_2}=a_{k_1,k_2}^{*}  \}, \\
	&&\hskip-.28in
	\mathcal{O}_m:= \{ \psi \in L^2(\mathbb{T}^2)\; | \; \psi= \sum\limits_{|k|\leq m} a_k \psi_k, \; a_{-k_1, k_2}=a_{k_1,k_2}^{*}  \},
\end{eqnarray*}
the finite-dimensional subspaces of $\mathcal{E}$ and $\mathcal{O}$, respectively. For any function $f\in L^2(\mathbb{T}^2)$, we denote by $\hat{f_j}=(f,\bar{\phi_j})$ and $\tilde{f_j}=(f,\bar{\psi_j})$, and we write $P_m f := \sum_{|k|\leq m} \hat{f_k} \phi_k$ and $\Pi_m f := \sum_{|k|\leq m} \tilde{f_k} \psi_k$. Then $P_m$ and $\Pi_m$ are the orthogonal projections from $L^2(\mathbb{T}^2)$ to $\mathcal{E}_m$ and $\mathcal{O}_m$, respectively. Now let
\begin{eqnarray*}
	&&\hskip-.68in
	u_m=\sum_{|k|=0}^{m}a_k(t)\phi_k(x,z), \;\; v_m=\sum_{|k|=0}^{m}b_k(t)\phi_k(x,z), \;\; T_m=\sum_{|k|=0}^{m}c_k(t)\psi_k(x,z),
\end{eqnarray*}
and consider the following Galerkin approximation system for our model (\ref{EQ-1})--(\ref{EQ-5}), with (\ref{EQw})--(\ref{EQpz}):
\begin{eqnarray}
&&\hskip-.8in
\partial_t u_m -\nu \partial^2_{zz} u_m+ P_m[u_m \partial_x u_m+w_m \partial_z u_m]+\ee_1 u_m-f_0 v_m+\partial_{x} p_m=0, \label{EQ-13}  \\
&&\hskip-.8in
\partial_t v_m -\nu \partial^2_{zz} v_m + P_m[u_m \partial_{x} v_m+w_m \partial_{z} v_m]+\ee_1 v_m + f_0 u_m=0, \label{EQ-14} \\
&&\hskip-.8in
\partial_tT_m-\kappa \Dd T_m+\Pi_m[u_m \partial_{x} T_m + w_m \partial_{z} T_m]=0,  \label{EQ-15}
\end{eqnarray}
with $w_m, \partial_{x} p_m, \partial_{z} p_m$ defined by:
\begin{eqnarray}
&&\hskip-.5in
w_m(x,z):=-\int_0^z \partial_x u_m(x,s)ds,  \label{EQwm}  \\
&&\hskip-.5in
\partial_{x} p_m(x,z):= \ee_2 \int_0^z\int_0^s \partial^2_{xx} u_m(x,\xi)d\xi ds-\int_0^z\partial_{x} T_m(x,s)ds \nonumber \\
&&\hskip-.02in
+\int_0^1 \Big[\int_0^{z'} \partial_{x} T_m(x,s)ds-\ee_2 \int_0^{z'}\int_0^s \partial^2_{xx} u_m(x,\xi)d\xi ds  + f_0 v_m(x,z') \Big]dz' \nonumber \\
&&\hskip-.02in
-P_m \int_0^1 2u_m\partial_x u_m(x,z')dz' - f_0 \int_0^1 \int_0^1  v_m(x',z') dx'dz', \label{EQpmx}  \\
&&\hskip-.5in
\partial_{z} p_m(x,z):= -T_m(x,z) + \ee_2 \int_0^z \partial_x u_m(x,s)ds, \label{EQpmz}
\end{eqnarray}
subjects to the following initial conditions:
\begin{eqnarray}
u_{m}(0)=P_m u_0, \;\; v_m(0) = P_m v_0, \;\;  T_m(0)= \Pi_m T_0.\label{EQ-18}
\end{eqnarray}
Observe that the definitions of $w_m, \partial_{x} p_m$ and $\partial_{z} p_m$ are inspired by (\ref{EQw})--(\ref{EQpz}). Moreover, notice that $(\partial_{x} p_m)_z(x,z)= -\partial_{x} T_m(x,z) + \ee_2 \int_0^z \partial^2_{xx} u_m(x,s)ds = (\partial_{z} p_m)_x (x,z)$, and hence (\ref{EQpmx}) and (\ref{EQpmz}) are compatible. For each $m\geq 1$, the Galerkin approximation, system (\ref{EQ-13})--(\ref{EQ-15}), together with (\ref{EQwm})--(\ref{EQpmz}) corresponds to a first order system of ordinary differential equations, in the coefficients $a_k, b_k$ and $c_k$ for $0\leq |k| \leq m$, with quadratic nonlinearity. Therefore, by the theory of ordinary differential equations, there exists some $t_m > 0$ such that system (\ref{EQ-13})--(\ref{EQ-15}) together with (\ref{EQwm})--(\ref{EQpmz}) admit a unique solution $(u_m, v_m, T_m)$ on the interval $[0, t_m]$. Observe that from (\ref{EQ-18}), we have $a_k(0), b_k(0), c_k(0)\in \mathbb{C}$ satisfying $a_{-k_1, k_2}(0)=a_{k_1,k_2}^{*}(0), b_{-k_1, k_2}(0)=b_{k_1,k_2}^{*}(0)$, and $c_{-k_1, k_2}(0)=c_{k_1,k_2}^{*}(0)$. Thanks to the uniqueness of the solutions of the ODE system, we conclude that $a_{-k_1, k_2}(t)=a_{k_1,k_2}^{*}(t), b_{-k_1, k_2}(t)=b_{k_1,k_2}^{*}(t)$, and $c_{-k_1, k_2}(t)=c_{k_1,k_2}^{*}(t)$, for $t\in[0, t_m]$. Therefore, $u_m, v_m \in \mathcal{E}_m$, and $T_m \in \mathcal{O}_m$. In the next section, we perform formal \textit{a priori} estimates for the original system (\ref{EQ-1})--(\ref{EQ-5}) with (\ref{EQw})--(\ref{EQpz}). These formal \textit{a priori} estimates can be justified rigorously by establishing them first to the Galerkin approximation system and then passing to the limit using the Aubin-Lions compactness theorem.

\subsubsection{A priori estimates}
The constant $C$ appears in the following inequalities may change from line to line. By taking the $L^2$-inner product of equation  (\ref{EQ-1}) with $u, -\Dd u, \Dd u_{xx}$, equation  (\ref{EQ-2}) with $v, -\Dd v, \Dd v_{xx}$,
equation  (\ref{EQ-3}) with $w, -\Dd w, \Dd w_{xx}$ and
equation  (\ref{EQ-5}) with $T, -\Dd T, \Dd T_{xx}$, and by integration by parts, thanks to (\ref{BC-1}) and (\ref{wpBC-1}), we have
\begin{eqnarray}
	&&\hskip-.28in \frac{1}{2} \frac{d}{dt} \Big(\|u\|_{L^2}^{2} + \|\nabla u\|_{L^2}^{2} +  \|v\|_{L^2}^{2} + \|\nabla v\|_{L^2}^{2} + \|\nabla u_{x}\|_{L^2}^{2} + \|\nabla v_{x}\|_{L^2}^{2} +\|T\|_{L^2}^{2} + \|\nabla T\|_{L^2}^{2}  + \|\nabla T_x\|_{L^2}^{2}\Big)\nonumber \\
	&&\hskip-.28in
	+ \nu \Big(\|u_z\|_{L^2}^{2} + \|v_z\|_{L^2}^{2} + \|\nabla u_z\|_{L^2}^{2} + \|\nabla v_z\|_{L^2}^{2}+ \|\nabla u_{xz}\|_{L^2}^{2} + \|\nabla v_{xz}\|_{L^2}^{2}\Big)
	\nonumber \\
     &&\hskip-.28in + \ee_1 \Big(\|u\|_{L^2}^{2} + \|\nabla u\|_{L^2}^{2} +  \|v\|_{L^2}^{2} + \|\nabla v\|_{L^2}^{2} + \|\nabla u_{x}\|_{L^2}^{2} + \|\nabla v_{x}\|_{L^2}^{2}\Big)
      \nonumber \\
     &&\hskip-.28in
     + \ee_2 \Big(\|w\|_{L^2}^{2} + \|\nabla w\|_{L^2}^{2}+ \|\nabla w_{x}\|_{L^2}^{2}\Big) + \kappa \Big( \|\nabla T\|_{L^2}^{2} + \|\Dd T\|_{L^2}^{2} + \|\Dd T_x\|_{L^2}^{2} \Big) \nonumber \\
     &&\hskip-.28in
       = \int_{\mathbb{T}^2} \left[ u u_{x}+w u_{z} - f_0 v + p_x
	\right] \left(-u+ \Dd u - \Dd u_{xx}\right) + \left[ u v_{x}+w v_{z} + f_0 u
	\right] \left(-v+ \Dd v - \Dd v_{xx}\right)\nonumber \\
	 &&\hskip-.08in
	 + \left(  p_{z}+ T\right) \left(-w + \Dd w - \Dd w_{xx}\right) + \left(uT_x+wT_z\right)(-T+ \Dd T -\Dd T_{xx})\; dxdz.\label{local}
\end{eqnarray}
By integration by parts, thanks to (\ref{BC-1}), (\ref{wpBC-1}) and (\ref{EQ-4}), we have
\begin{eqnarray*}
	&&\hskip-.45in  \int_{\mathbb{T}^2} \left(-f_0v + p_x\right) \left(-u+ \Dd u - \Dd u_{xx}\right) + f_0 u \left(-v + \Dd v - \Dd v_{xx}\right) + p_z\left(-w + \Dd w - \Dd w_{xx}\right) \\
	&&\hskip-.45in
	+ \left(uu_x+wu_z\right)(-u +u_{zz} ) + (uv_x + wv_z)(-v) + (uT_x + wT_z)(-T)\; dxdz =0.
\end{eqnarray*}
Therefore, the right-hand side of (\ref{local}) becomes
\begin{eqnarray*}
&&\hskip-.28in
 \int_{\mathbb{T}^2} \left( u u_{x}+w u_{z}
\right)\left(u_{xx} - u_{xxxx}- u_{xxzz}\right) + \left( u v_{x}+w v_{z}
\right) \left(\Dd v - v_{xxxx} - v_{xxzz}\right)
 \\
&&\hskip-.28in + T \left(-w + \Dd w - \Dd w_{xx} \right) + \left(uT_x+wT_z\right)(\Dd T - \Dd T_{xx})\; dxdz
=: I_1+I_2+I_3+I_4 +I_5+I_6+I_7+I_8.
\end{eqnarray*}
Let us denote by
\begin{eqnarray}
&&\hskip-.28in
 Y:=1+\|u\|_{L^2}^{2} + \|\nabla u\|_{L^2}^{2} +  \|v\|_{L^2}^{2} + \|\nabla v\|_{L^2}^{2} + \|\nabla u_{x}\|_{L^2}^{2} + \|\nabla v_{x}\|_{L^2}^{2} +\|T\|_{L^2}^{2} + \|\nabla T\|_{L^2}^{2} +  \|\nabla T_x\|_{L^2}^{2}, \nonumber \\
&&\hskip-.28in
 F:=\|u_z\|_{L^2}^{2} + \|v_z\|_{L^2}^{2} + \|\nabla u_z\|_{L^2}^{2} + \|\nabla v_z\|_{L^2}^{2}+ \|\nabla u_{xz}\|_{L^2}^{2} + \|\nabla v_{xz}\|_{L^2}^{2}, \nonumber \\
 &&\hskip-.28in
G:= \|w\|_{L^2}^{2} + \|\nabla w\|_{L^2}^{2}+ \|\nabla w_{x}\|_{L^2}^{2}, \qquad  \qquad K :=\|\nabla T\|_{L^2}^{2} + \|\Dd T\|_{L^2}^{2} + \|\Dd T_x\|_{L^2}^{2}. \label{yfgh}
\end{eqnarray}
From (\ref{w}), by H\"older inequality and Minkowski inequality, we have
\begin{eqnarray}
&&\hskip-.68in \|w\|_{L^2}= \left(\int_0^1 \int_0^1 |\int_0^z u_x(x,s) ds|^2 dxdz\right)^{\frac{1}{2}}\leq \int_0^z \left(\int_0^1 \int_0^1 |u_x(x,s)|^2 dxdz  \right)^{\frac{1}{2}} ds \nonumber\\
&&\hskip-.68in \leq \int_0^1 \left(\int_0^1 \int_0^1 |u_x(x,s)|^2 dxdz  \right)^{\frac{1}{2}} ds \leq \left(\int_0^1 \int_0^1 \int_0^1 |u_x(x,s)|^2 dxdz   ds\right)^{\frac{1}{2}} = \|u_x\|_{L^2}. \label{ESw}
\end{eqnarray}
Similarly, one can get
\begin{eqnarray}
&&\hskip-.68in \|w_x\|_{L^2} \leq \|u_{xx}\|_{L^2}. \label{ESwx}
\end{eqnarray}
Let us estimate terms $I_1$--$I_8$. By integration by parts, using Cauchy--Schwarz inequality, Young's inequality and Lemma 1, thanks to (\ref{BC-1}), (\ref{wpBC-1}), (\ref{EQ-4}), (\ref{ESw}) and (\ref{ESwx}), we have
\begin{eqnarray*}
	&&\hskip-.45in  |I_1| = \left|\int_{\mathbb{T}^2} \left( u u_{x}+w u_{z}
	\right) u_{xx} dxdz \right|
	\leq C \|u\|_{L^2}^{\frac{1}{2}} (\|u\|_{L^2}^{\frac{1}{2}} + \|u_z\|_{L^2}^{\frac{1}{2}}) \|u_x\|_{L^2}^{\frac{1}{2}} (\|u_x\|_{L^2}^{\frac{1}{2}} + \|u_{xx}\|_{L^2}^{\frac{1}{2}})\|u_{xx}\|_{L^2} \\
	&&\hskip .35in + C \|u_z\|_{L^2}^{\frac{1}{2}} (\|u_z\|_{L^2}^{\frac{1}{2}} + \|u_{xz}\|_{L^2}^{\frac{1}{2}}) \|w\|_{L^2}^{\frac{1}{2}} (\|w\|_{L^2}^{\frac{1}{2}} + \|w_{z}\|_{L^2}^{\frac{1}{2}}) \|u_{xx}\|_{L^2}\leq  CY^{\frac{3}{2}} \leq CY^{3}, \\
	&&\hskip-.45in  |I_2| = \left|\int_{\mathbb{T}^2} \left[ u u_{x}+w u_{z}
	\right]\; u_{xxxx} \; dxdz\right|=
	\left|  \int_{\mathbb{T}^2} \left[ 3u_{x} u_{xx}+w_{xx} u_{z} + 2w_{x} u_{xz}
	\right]\; u_{xx} \; dxdz \right|  \\
	&&\hskip-.25in
	\leq    C \Big[  \|u_x\|_{L^2}^{\frac{1}{2}} (\|u_x\|_{L^2}^{\frac{1}{2}} + \|u_{xx}\|_{L^2}^{\frac{1}{2}}) \|u_{xx}\|_{L^2}^{\frac{3}{2}} (\|u_{xx}\|_{L^2}^{\frac{1}{2}} + \|u_{xxz}\|_{L^2}^{\frac{1}{2}})\\
	&&\hskip-.05in
	+ \|w_{xx} \|_{L^2}   \|u_{z}\|_{L^2}^{\frac{1}{2}} ( \|u_{z}\|_{L^2}^{\frac{1}{2}}+ \|u_{xz}\|_{L^2}^{\frac{1}{2}}) \|u_{xx}\|_{L^2}^{\frac{1}{2}} (\|u_{xx}\|_{L^2}^{\frac{1}{2}}+ \|u_{xxz}\|_{L^2}^{\frac{1}{2}})  \\
	&&\hskip-.05in
	+  \|w_{x} \|_{L^2}  \|u_{xz}\|_{L^2}^{\frac{1}{2}}(\|u_{xz}\|_{L^2}^{\frac{1}{2}} + \|u_{xxz}\|_{L^2}^{\frac{1}{2}} ) \|u_{xx}\|_{L^2}^{\frac{1}{2}} (\|u_{xx}\|_{L^2}^{\frac{1}{2}} +  \|u_{xxz}\|_{L^2}^{\frac{1}{2}}) \Big] \\
	&&\hskip-.25in
	\leq \frac{\ee_2}{6} \|w_{xx}\|_{L^2}^2 + \frac{\nu}{6}\|u_{xxz}\|_{L^2}^2 + CY^3 \leq \frac{\nu}{10}F + \frac{\ee_2}{6}G + CY^3,\\
	&&\hskip-.45in  |I_3| = \left|\int_{\mathbb{T}^2} \left[ u u_{x}+w u_{z}
	\right]\; u_{xxzz} \; dxdz\right|=
	\left|  \int_{\mathbb{T}^2} \left( u_xu_{xz}+w_x u_{zz}
	\right)\; u_{xz} \; dxdz \right|  \\
	&&\hskip-.25in
	\leq    C \Big[  \|u_x\|_{L^2}^{\frac{1}{2}} (\|u_x\|_{L^2}^{\frac{1}{2}} + \|u_{xz}\|_{L^2}^{\frac{1}{2}}) \|u_{xz}\|_{L^2}^{\frac{3}{2}} (\|u_{xz}\|_{L^2}^{\frac{1}{2}} + \|u_{xxz}\|_{L^2}^{\frac{1}{2}})\\
	&&\hskip-.05in
	+ \|u_{xz} \|_{L^2}   \|w_{x}\|_{L^2}^{\frac{1}{2}} ( \|w_{x}\|_{L^2}^{\frac{1}{2}}+ \|w_{xz}\|_{L^2}^{\frac{1}{2}}) \|u_{zz}\|_{L^2}^{\frac{1}{2}} (\|u_{zz}\|_{L^2}^{\frac{1}{2}}+ \|u_{xzz}\|_{L^2}^{\frac{1}{2}})  \\
	&&\hskip-.25in
	\leq \frac{\nu}{10}(\|u_{zz}\|_{L^2}^2 + \|u_{xxz}\|_{L^2}^2 + \|u_{xzz}\|_{L^2}^2 )+ CY^2 \leq \frac{\nu}{10}F + CY^3, \\
	&&\hskip-.45in  |I_4| = \left|\int_{\mathbb{T}^2} \left[ u v_{x}+w v_{z}
	\right]\; \Dd v \; dxdz\right|	\leq    C (\|u_{}\|_{L^2}+\|u_{z}\|_{L^2})(\|v_{x}\|_{L^2}+\|v_{xx}\|_{L^2})(\|v_{xx}\|_{L^2}+ \|v_{zz}\|_{L^2}) \\
	&&\hskip-.25in
    + C(\|w_{}\|_{L^2}+\|w_{z}\|_{L^2})(\|v_{z}\|_{L^2} + \|v_{xz}\|_{L^2})(\|v_{xx}\|_{L^2}+ \|v_{zz}\|_{L^2})
    \leq \frac{\nu}{10}\| v_{zz}\|_{L^2}^2  +CY^2 \leq \frac{\nu}{10}F  + CY^3, \\
	&&\hskip-.45in  |I_5| = |\int_{\mathbb{T}^2} \left[ u v_{x}+w v_{z}
	\right]\; v_{xxxx} dxdz | =
	|\int_{\mathbb{T}^2} \left[ u_{xx} v_{x} + w_{xx} v_{z} +2 u_x v_{xx} + 2w_{x} v_{xz} \right]\; v_{xx}  dxdz| \\
	&&\hskip-.25in
	\leq C \Big[ \|v_{xx}\|_{L^2}\|v_{x}\|_{L^2}^{\frac{1}{2}} (\|v_{x}\|_{L^2}^{\frac{1}{2}} + \|v_{xx}\|_{L^2}^{\frac{1}{2}}) \|u_{xx}\|^{\frac{1}{2}}_{L^2} (\|u_{xx}\|^{\frac{1}{2}}_{L^2} + \|u_{xxz}\|_{L^2}^{\frac{1}{2}})\\
	&&\hskip-.05in
	+ \|w_{xx}\|_{L^2}\|v_{z}\|_{L^2}^{\frac{1}{2}} (\|v_{z}\|_{L^2}^{\frac{1}{2}} + \|v_{xz}\|_{L^2}^{\frac{1}{2}}) \|v_{xx}\|_{L^2}^{\frac{1}{2}} ( \|v_{xx}\|_{L^2}^{\frac{1}{2}} + \|v_{xxz}\|_{L^2}^{\frac{1}{2}} )  \\
	&&\hskip-.05in
	+\|v_{xx}\|_{L^2}^{\frac{3}{2}}(\|v_{xx}\|_{L^2}^{\frac{1}{2}} + \|v_{xxz}\|_{L^2}^{\frac{1}{2}} )\|u_{x}\|_{L^2}^{\frac{1}{2}} ( \|u_{x}\|_{L^2}^{\frac{1}{2}} + \|u_{xx}\|_{L^2}^{\frac{1}{2}}) \\
	&&\hskip-.05in
	+ \|w_x\|_{L^2}^{\frac{1}{2}} (\|w_x\|_{L^2}^{\frac{1}{2}} + \|w_{xz}\|_{L^2}^{\frac{1}{2}})  \|v_{xz}\|_{L^2}^{\frac{1}{2}} (\|v_{xz}\|_{L^2}^{\frac{1}{2}} + \|v_{xxz}\|_{L^2}^{\frac{1}{2}}) \|v_{xx}\|_{L^2} \Big] \\
	&&\hskip-.25in
	\leq \frac{\ee_2}{6} \|w_{xx}\|_{L^2}^2 + \frac{\nu}{10}(\|u_{xxz}\|_{L^2}^2+\|v_{xxz}\|_{L^2}^2)+ CY^3\leq \frac{\nu}{10}F + \frac{\ee_2}{6}G + CY^3,\\
	&&\hskip-.45in  |I_6| = |\int_{\mathbb{T}^2} \left[ u v_{x}+w v_{z}
	\right]\; v_{xxzz} dxdz | =
	|\int_{\mathbb{T}^2} \left[ u_{xz} v_{x} + v_{xx} u_{z} - v_z u_{xx} + w_{x} v_{xz} \right]\; v_{xz}  dxdz| \\
	&&\hskip-.25in
	\leq C \Big[ \|v_{xz}\|_{L^2}\|v_{x}\|_{L^2}^{\frac{1}{2}} (\|v_{x}\|_{L^2}^{\frac{1}{2}} + \|v_{xz}\|_{L^2}^{\frac{1}{2}}) \|u_{xz}\|^{\frac{1}{2}}_{L^2} (\|u_{xz}\|^{\frac{1}{2}}_{L^2} + \|u_{xxz}\|_{L^2}^{\frac{1}{2}})\\
	&&\hskip-.05in
	+ \|v_{xz}\|_{L^2}\|u_{z}\|_{L^2}^{\frac{1}{2}} (\|u_{z}\|_{L^2}^{\frac{1}{2}} + \|u_{xz}\|_{L^2}^{\frac{1}{2}}) \|v_{xx}\|_{L^2}^{\frac{1}{2}} ( \|v_{xx}\|_{L^2}^{\frac{1}{2}} + \|v_{xxz}\|_{L^2}^{\frac{1}{2}} )  \\
	&&\hskip-.05in
	+\|v_{xz}\|_{L^2}\|v_{z}\|_{L^2}^{\frac{1}{2}} ( \|v_{z}\|_{L^2}^{\frac{1}{2}} + \|v_{xz}\|_{L^2}^{\frac{1}{2}})\|u_{xx}\|_{L^2}^{\frac{1}{2}}(\|u_{xx}\|_{L^2}^{\frac{1}{2}} + \|u_{xxz}\|_{L^2}^{\frac{1}{2}} ) \\
	&&\hskip-.05in
	+ \|v_{xz}\|_{L^2}\|w_x\|_{L^2}^{\frac{1}{2}} (\|w_x\|_{L^2}^{\frac{1}{2}} + \|w_{xz}\|_{L^2}^{\frac{1}{2}})  \|v_{xz}\|_{L^2}^{\frac{1}{2}} (\|v_{xz}\|_{L^2}^{\frac{1}{2}} + \|v_{xxz}\|_{L^2}^{\frac{1}{2}})  \Big] \\
	&&\hskip-.25in
	\leq  \frac{\nu}{10}(\|u_{xxz}\|_{L^2}^2+\|v_{xxz}\|_{L^2}^2)+ CY^2\leq \frac{\nu}{10}F  + CY^3,
\end{eqnarray*}
\begin{eqnarray*}
	&&\hskip-.45in  |I_7| = |\int_{\mathbb{T}^2} T \left(-w + \Dd w - \Dd w_{xx} \right) dxdz |
	 \\
	&&\hskip-.25in
	\leq \|T\|_{L^2}\|w\|_{L^2}+ \|\nabla T\|_{L^2} \|\nabla w\|_{L^2} +\|\nabla T_x\|_{L^2} \|\nabla w_x\|_{L^2} \leq \frac{\ee_2}{6}G +CY,\\
	&&\hskip-.45in  |I_8| = |\int_{\mathbb{T}^2} \left[ u T_{x}+w T_{z}
	\right]\; (\Dd T - \Dd T_{xx}) dxdz | \\
	&&\hskip-.25in
	\leq |\int_{\mathbb{T}^2} \left[ u T_{x}+w T_{z}
	\right]\; \Dd T  dxdz | + |\int_{\mathbb{T}^2} \left[ u_x T_{x} + uT_{xx} + wT_{xz}+w_x T_{z}
	\right]\;  \Dd T_{x} dxdz |\\
	&&\hskip-.25in
	\leq C \Big[ \|u\|_{L^2}^{\frac{1}{2}} (\|u\|_{L^2}^{\frac{1}{2}} + \|u_{x}\|_{L^2}^{\frac{1}{2}}) \|T_{x}\|^{\frac{1}{2}}_{L^2} (\|T_{x}\|^{\frac{1}{2}}_{L^2} + \|T_{xz}\|_{L^2}^{\frac{1}{2}}) \\
	&&\hskip.15in
	+ \|w\|_{L^2}^{\frac{1}{2}} (\|w\|_{L^2}^{\frac{1}{2}} + \|w_{z}\|_{L^2}^{\frac{1}{2}}) \|T_{z}\|^{\frac{1}{2}}_{L^2} (\|T_{z}\|^{\frac{1}{2}}_{L^2} + \|T_{xz}\|_{L^2}^{\frac{1}{2}}) \Big] \|\Dd T\|_{L^2}\\
	&&\hskip-.15in
	+  C\Big[ \|u_x\|_{L^2}^{\frac{1}{2}} (\|u_x\|_{L^2}^{\frac{1}{2}} + \|u_{xx}\|_{L^2}^{\frac{1}{2}}) \|T_{x}\|^{\frac{1}{2}}_{L^2} (\|T_{x}\|^{\frac{1}{2}}_{L^2} + \|T_{xz}\|_{L^2}^{\frac{1}{2}}) \\
	&&\hskip.15in
	+ \|u\|_{L^2}^{\frac{1}{2}} (\|u\|_{L^2}^{\frac{1}{2}} + \|u_{z}\|_{L^2}^{\frac{1}{2}}) \|T_{xx}\|^{\frac{1}{2}}_{L^2} (\|T_{xx}\|^{\frac{1}{2}}_{L^2} + \|T_{xxx}\|_{L^2}^{\frac{1}{2}}) \\
	&&\hskip.15in
	+ \|w\|_{L^2}^{\frac{1}{2}} (\|w\|_{L^2}^{\frac{1}{2}} + \|w_{x}\|_{L^2}^{\frac{1}{2}}) \|T_{xz}\|^{\frac{1}{2}}_{L^2} (\|T_{xz}\|^{\frac{1}{2}}_{L^2} + \|T_{xzz}\|_{L^2}^{\frac{1}{2}})\\
	&&\hskip.15in
	+ \|w_x\|_{L^2}^{\frac{1}{2}} (\|w_x\|_{L^2}^{\frac{1}{2}} + \|w_{xz}\|_{L^2}^{\frac{1}{2}}) \|T_{z}\|^{\frac{1}{2}}_{L^2} (\|T_{z}\|^{\frac{1}{2}}_{L^2} + \|T_{xz}\|_{L^2}^{\frac{1}{2}}) \Big] \|\Dd T_x\|_{L^2} \\
	&&\hskip-.25in
	\leq  \frac{\kappa}{2}(\|\Dd T\|_{L^2}^2+\|\Dd T_x\|_{L^2}^2)+ CY^3\leq \frac{\kappa}{2} K + CY^3.
\end{eqnarray*}
From the estimates above, (\ref{local}) becomes
\begin{eqnarray}
&&\hskip-.28in  \frac{dY}{dt}+\nu F+\ee_2 G + \kappa K\leq CY^3. \label{local2}
\end{eqnarray}
Therefore, we have $\frac{dY}{dt} \leq C Y^3$, and this implies that
$$Y(t)\leq \sqrt{\frac{Y(0)^2}{1-Y(0)^2Ct}}.$$
Choose $$\mathcal{T}=\frac{3}{4CY(0)^2}.$$
From above, we have $Y(t)\leq 2Y(0)$ on $[0,\mathcal{T}]$.
Plugging it in (\ref{local2}), we have
\begin{eqnarray}
	&&\hskip-.68in \frac{dY}{dt} + \nu F + \ee_2 G +\kappa K\leq 8C Y(0)^3,  \;\text{for}\; t\in[0,\mathcal{T}]. \label{localest}
\end{eqnarray}
Integrating above from 0 to $t$ for any time $t\in[0,\mathcal{T}]$, we obtain
\begin{eqnarray*}
	&&\hskip-.68in Y(t) + \int_0^t \Big(\nu F(s) + \ee_2 G(s) +\kappa K(s)\Big) \; ds \leq Y(0) + 8Ct Y(0)^3 .
\end{eqnarray*}
Therefore, we have
\begin{eqnarray}
u, v, T, u_x, v_x, T_x \in L^{\infty}(0,\mathcal{T};H^1), \;\; u_z,v_z,u_{xz},v_{xz} \in L^2(0,\mathcal{T};H^1), \;\; T,T_x \in L^2(0,\mathcal{T};H^2). 	\label{L-2x}
\end{eqnarray}
By virtue of (\ref{L-2x}) and (\ref{ESw}), we have
\begin{eqnarray}
w\in L^{\infty}(0,\mathcal{T};H^1). 	\label{L-2x2}
\end{eqnarray}
Thanks to Lemma 2, from (\ref{L-2x}), we also have
\begin{eqnarray}
u,v,T\in L^{\infty}(0,\mathcal{T};L^\infty),\;\; \nabla u,\nabla v, \nabla T\in L^{2}(0,\mathcal{T};L^\infty). 	\label{L-2x3}
\end{eqnarray}


\subsubsection{Uniqueness of solutions and its continuous dependence on the initial data}
In this section, we will show the continuous dependence on the initial data and
the uniqueness of the strong solutions. Let $(u_1, v_1, w_1, p_1, T_1)$ and
$(u_2, v_2, w_2, p_2, T_2)$ be two strong solutions of system
(\ref{EQ-1})--(\ref{EQ-5}) with (\ref{EQw})--(\ref{EQpz}), and initial data $((u_0)_1, (v_0)_1, (T_0)_1)$ and
$((u_0)_2, (v_0)_2, (T_0)_2)$, respectively. Denote by $u=u_1-u_2$, $v=v_1-v_2$, $w=w_1-w_2$, $p=p_1 -p_2$, $T=T_1-T_2.$ It is clear that
\begin{eqnarray}
&&\hskip-.8in
\pp_t  u - \nu u_{zz}   + u_1 u_x + w_1 u_z  + u (u_2)_x + w (u_2)_z + \ee_1 u -f_0 v + p_x = 0,  \label{UPP-1}  \\
&&\hskip-.8in
\pp_t  v - \nu v_{zz}   + u_1 v_x + w_1 v_z  + u (v_2)_x + w (v_2)_z + \ee_1 v +f_0 u = 0,  \label{UPP-2}  \\
&&\hskip-.8in
\ee_2 w + p_z +T =0,   \label{UPP-3}   \\
&&\hskip-.8in
u_x + w_z = 0, \label{UPP-5} \\
&&\hskip-.8in
\pp_t T - \kappa \Dd T  + u_1 T_x + w_1 T_z  + u (T_2)_x + w (T_2)_z  = 0.  \label{UPP-4}
\end{eqnarray}
By taking the inner product of equation (\ref{UPP-1}) with $u$, (\ref{UPP-2}) with  $v$,
(\ref{UPP-3}) with  $w$, and (\ref{UPP-4}) with  $T$,
in $L^2(\mathbb{T}^2)$, and by integration by parts, thanks to (\ref{BC-1}), (\ref{EQ-4}), and (\ref{UPP-5}), we get
\begin{eqnarray*}
	&&\hskip-.58in \frac{1}{2} \frac{d (\|u\|_{L^2}^{2}+\|v\|_{L^2}^{2}+\|T\|_{L^2}^{2})}{dt} + \ee_1 (\|u\|_{L^2}^{2} +\|v\|_{L^2}^{2}) + \nu (\|u_z\|_{L^2}^{2} +\|v_z\|_{L^2}^{2})
	+ \ee_2  \|w\|_{L^2}^{2} + \kappa  \, \| \nabla T\|_{L^2}^2 \\
	&&\hskip-.55in
	\leq  \left| \int_{\mathbb{T}^2} \left[ u (u_2)_x+w(u_2)_z \right]\; u \; dxdz \right|+\left|\int_{\mathbb{T}^2} \left[ u (v_2)_x+w(v_2)_z
	\right]\; v \; dxdz \right|   \\
	&&\hskip-.365in  +\left|\int_{\mathbb{T}^2} wT\; dxdz\right| +\left|\int_{\mathbb{T}^2} \left[ u (T_2)_x+w(T_2)_z
	\right]\; T \; dxdz\right| =: \rom{1}+\rom{2}+\rom{3}+\rom{4}.
\end{eqnarray*}
By integration by parts, using H\"older inequality and Young's inequality, thanks to (\ref{BC-1}), (\ref{EQ-4}), and (\ref{UPP-5}), we have
\begin{eqnarray*}
	&&\hskip.08in
	\rom{1} = \left| \int_{\mathbb{T}^2} \left[ u (u_2)_x+w(u_2)_z \right]\; u \; dxdz \right|
	\leq \frac{\ee_2}{8}\|w\|_{L^2}^2 + (\|(u_2)_{x}\|_{L^\infty}+\|(u_2)_{z}\|_{L^\infty}^2)\|u\|_{L^2}^2,
\end{eqnarray*}
\begin{eqnarray*}
	&&\hskip.08in
	\rom{2} = \left|\int_{\mathbb{T}^2} \left[ u (v_2)_x+w(v_2)_z
	\right]\; v \; dxdz \right| \leq \frac{\ee_2}{8}\|w\|_{L^2}^2 + (\|(v_2)_{x}\|_{L^\infty}+\|(v_2)_{z}\|_{L^\infty}^2)(\|u\|_{L^2}^2 + \|v\|_{L^2}^2), \label{UV}
\end{eqnarray*}
\begin{eqnarray*}
	&&\hskip.08in
	\rom{3} = \left|\int_{\mathbb{T}^2} wT\; dxdz\right| \leq \frac{\ee_2}{8} \|w\|_{L^2}^2 + C \|T\|_{L^2}^2,
\end{eqnarray*}
\begin{eqnarray*}
	&&\hskip-.38in
\rom{4} = \left|\int_{\mathbb{T}^2} \left[ u (T_2)_x+w(T_2)_z
\right]\; T \; dxdz\right|
\leq \frac{\ee_2}{8}\|w\|_{L^2}^2 + (\|(T_2)_{x}\|_{L^\infty}+\|(T_2)_{z}\|_{L^\infty}^2)(\|u\|_{L^2}^2 + \|T\|_{L^2}^2).
\end{eqnarray*}	
From the estimates above, we obtain
\begin{eqnarray*}
	&&\hskip-.68in \frac{d (\|u\|_{L^2}^{2}+\|v\|_{L^2}^{2}+\|T\|_{L^2}^{2})}{dt} + \ee_1 (\|u\|_{L^2}^{2} +\|v\|_{L^2}^{2}) + \nu (\|u_z\|_{L^2}^{2} +\|v_z\|_{L^2}^{2})
	+ \ee_2  \|w\|_{L^2}^{2} + \kappa  \, \| \nabla T\|_{L^2}^2 \\
	&&\hskip-.55in
	\leq   C K \left(\|u\|_{L^2}^{2}+\|v\|_{L^2}^{2}+\|T\|_{L^2}^2 \right),
\end{eqnarray*}
where
\begin{eqnarray*}
	&&\hskip-.68in
	K=1+ \|\nabla u_2\|_{L^\infty}^2 + \|\nabla v_2\|_{L^\infty}^2 + \|\nabla T_2\|_{L^\infty}^2.
\end{eqnarray*}	
Thanks to (\ref{L-2x3}), we obtain $K\in L^1(0,\mathcal{T})$. Therefore, by Gronwall inequality, we obtain
\begin{eqnarray*}
	&&\hskip-.68in
	\|u(t)\|_{L^2}^{2}+\|v(t)\|_{L^2}^{2}+\|T(t)\|_{L^2}^{2}
	\leq
	(\|u(t=0)\|_{L^2}^2 +\|v(t=0)\|_{L^2}^2 +\|T(t=0)\|_{L^2}^2)
	e^{ C \int_0^t K(s)ds},
\end{eqnarray*}
The above inequality proves the continuous dependence of the
solutions on the initial data, and in particular, when
$u(t=0)=v(t=0)=T(t=0)=0$, we have $u(t)=v(t)=T(t)=0,$ for all $t\ge 0$.
Therefore, the strong solution is unique.

\subsection{The Special Case: $\mathbf{f_0=0, v\equiv 0}$ and $\mathbf{T\equiv 0}$}
In this section, we assume that $f_0=0$, $v\equiv 0$ and $T\equiv 0$. In this case, system (\ref{EQO-1})--(\ref{EQO-5}) will be reduced to:
\begin{eqnarray}
&&\hskip-.8in
\pp_t u  -\nu  u_{zz} + u u_x +w u_z +\ee_1 u + p_x  = 0,  \label{PP-1}  \\
&&\hskip-.8in
\ee_2 w + p_z  =0,   \label{PP-2}   \\
&&\hskip-.8in u_x+w_z=0,    \label{PP-3}
\end{eqnarray}
\begin{remark}
	There are two reasons why we consider this special case. Firstly, notice that when $\ee_1=\ee_2=0$, system (\ref{PP-1})--(\ref{PP-3}) is exactly the hydrostatic Navier-Stokes equations (\ref{EQ-1-4})--(\ref{EQ-3-4}). So we can regard system (\ref{PP-1})--(\ref{PP-3}) as the hydrostatic Navier-Stokes equations with damping. Secondly, as we will see later, we can show the local regularity of strong solution to system (\ref{PP-1})--(\ref{PP-3}) for initial conditions with less regularity.
	The reason why we need to assume more regularity for initial data to system (\ref{EQ-1})--(\ref{EQ-5}) is that we need to bound terms which contain $v_{xx}$. For $u_{xx}$, we can use incompressible condition $u_{xx}=-w_{xz}$ to avoid such an issue. Therefore, in the case when we do not have the evolution equation for $v$, we can require less for the initial data.
\end{remark}
As in section 3.2, our domain is $\mathbb{T}^2$, and the boundary and initial condition are
\begin{eqnarray}
&&\hskip-.8in
u,\;  w \; \text{and} \; p \; \text{are periodic in} \; x \;  \text{and} \; z \; \text{with period 1},  \label{DBC-1} \\
&&\hskip-.8in
u \; \text{and} \; p \; \text{are even in} \; z, \; \text{and} \; w \;  \text{is odd in} \; z, \label{DBC-2} \\
&&\hskip-.8in
u|_{t=0}=u_0. \label{DIC-1}
\end{eqnarray}
Using an analogue argument to that in section 3.1, system (\ref{PP-1})--(\ref{PP-3}) subjects to (\ref{DBC-1})--(\ref{DIC-1}) is equivalent to the following:
\begin{eqnarray}
&&\hskip-.8in
u_t -\nu  u_{zz} + u\, u_x + w u_z +\ee_1 u + p_x = 0,  \label{PP-u}
\end{eqnarray}
with $w,p_x,p_z$ defined by:
\begin{eqnarray}
&&\hskip-.8in
w(x,z):=-\int_0^z u_x(x,s)ds,  \label{PPw}  \\
&&\hskip-.8in
p_{x}(x,z):= \ee_2 \int_0^z\int_0^s u_{xx}(x,\xi)d\xi ds
+\int_0^1 \Big[-\ee_2 \int_0^{z'}\int_0^s u_{xx}(x,\xi)d\xi ds -2uu_x(x,z')\Big]dz', \label{PPpx}  \\
&&\hskip-.8in
p_z(x,z):= \ee_2 \int_0^z u_x(x,s)ds, \label{PPpz}
\end{eqnarray}
subject to the following symmetry boundary condition and initial condition:
\begin{eqnarray}
&&\hskip-.8in
u \; \text{is periodic in} \; x \; \text{and} \; z  \; \text{with period 1, and is even in } \; z; \label{PPBC} \\
&&\hskip-.8in
u|_{t=0}=u_0.\label{PPIC}
\end{eqnarray}
By virtue of (\ref{PPw})--(\ref{PPpz}) and (\ref{PPBC}), we obtain that $w, p$ also satisfy the symmetry conditions:
\begin{eqnarray}
&&\hskip-.8in
w \; \text{and} \; p \; \text{are periodic in} \; x \; \text{and} \; z \; \text{with period 1}; \label{PPwpBC-1} \\
&&\hskip-.8in
p \; \text{is even in} \; z, \; \text{and} \; w \; \text{is odd in} \;z. \label{PPwpBC-2}
\end{eqnarray}
By virtue of (\ref{PPw}) and (\ref{PPpz}), and by differentiating (\ref{PPw}) with respect to $z$, we have
\begin{eqnarray}
&&\hskip-.8in
\ee_2 w + p_z  =0,    \label{PP-3a}  \\
&&\hskip-.8in
u_x + w_z =0.   \label{PP-4a}
\end{eqnarray}
In this section, we are interested in system (\ref{PP-u}) with (\ref{PPw})--(\ref{PPpz}) in the unit two-dimensional flat torus $\mathbb{T}^2$, subjects to (\ref{PPBC})--(\ref{PPIC}). First, we give the definition of strong solution to system (\ref{PP-u}) with (\ref{PPw})--(\ref{PPpz}).
\begin{definition}
	Suppose that $u_0\in H^1(\mathbb{T}^2)$ satisfies the symmetry conditions (\ref{PPBC}), with the compatibility condition $\int_0^1 \partial_x  u_{0} dz =0$. Moreover, suppose that $ \partial^2_{xz} u_{0}\in L^2(\mathbb{T}^2)$. Given time $\mathcal{T}>0$, we say u is a strong solution to system (\ref{PP-u}) with (\ref{PPw})--(\ref{PPpz}), subjects to (\ref{PPBC})--(\ref{PPIC}), on the time interval $[0,\mathcal{T}]$, if \\
	
	(i) u satisfies the symmetry condition (\ref{PPBC});\\
	
	(ii) u has the regularities
	\begin{eqnarray*}
		&&\hskip-.8in
		u\in L^{\infty}(0,\mathcal{T};H^1)\cap L^{2}(0,\mathcal{T};H^2)\cap C([0,\mathcal{T}], L^2)\cap L^{\infty}(0,\mathcal{T};L^\infty),  \;\; u_z \in L^{2}(0,\mathcal{T};L^{\infty})\\
		&&\hskip-.8in
		u_{xz}\in L^\infty(0,\mathcal{T};L^2), \;\;  u_{xzz}\in L^2(0,\mathcal{T};L^2), \;\;	\partial_t u\in L^2(0,\mathcal{T};L^2);
	\end{eqnarray*} 	
	
	(iii) u satisfies system (\ref{PP-u}) in the following sense:
	\begin{eqnarray*}
		&&\hskip-.68in
		\pp_t u  -\nu  u_{zz} + u u_x +w u_z +\ee_1 u   + p_x  = 0,  \;\;  \text{in} \; \; L^2(0,\mathcal{T}; L^2),
	\end{eqnarray*}
	with $w,p_x,p_z$ defined by (\ref{PPw})--(\ref{PPpz}), and fulfill the initial condition (\ref{PPIC}).
\end{definition}
We have the following result concerning the locally existence and uniqueness of strong solutions to system (\ref{PP-u}) with (\ref{PPw})--(\ref{PPpz}), subjects to (\ref{PPBC})--(\ref{PPIC}), on $\mathbb{T}^2 \times (0,\mathcal{T})$, for some positive time $\mathcal{T}$.
\begin{theorem}
	Suppose that $u_0 \in H^1(\mathbb{T}^2)$ satisfies the symmetry conditions (\ref{PPBC}), with the compatibility condition $\int_0^1 \partial_x u_{0} dz = 0$. Moreover, suppose that $\partial^2_{xz} u_{0}\in L^2(\mathbb{T}^2)$. Then there exists some $\mathcal{T}>0$ such that there is a unique strong solution u of system
	(\ref{PP-u}) with (\ref{PPw})--(\ref{PPpz}), subjects to (\ref{PPBC})--(\ref{PPIC}), on the interval $[0,\mathcal{T}]$. Moreover, the unique strong solution u depends continuously on the initial data.
\end{theorem}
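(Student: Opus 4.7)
The plan is to follow the structure of the proof of Theorem \ref{T-MAIN}: Galerkin approximation in the subspace $\mathcal{E}_m$ of cosine-in-$z$ modes (into which the symmetry condition (\ref{PPBC}) embeds $u$), formal a priori estimates, passing to the limit by the Aubin--Lions lemma, and an $L^2$ difference estimate for uniqueness and continuous dependence. The natural working quantity is
\begin{equation*}
Y(t):=1+\|u(t)\|_{H^1}^2+\|u_{xz}(t)\|_{L^2}^2,
\end{equation*}
which by Lemma 2 controls $\|u\|_{L^\infty}$ and which is finite at $t=0$ under the standing hypotheses on $u_0$. The aim of the a priori step is to derive a Riccati-type inequality $\frac{dY}{dt}+D(t)\le CY^3$, where $D$ is a nonnegative combination of $\|u_z\|_{L^2}^2,\|u_{xz}\|_{L^2}^2,\|u_{zz}\|_{L^2}^2,\|u_{xzz}\|_{L^2}^2,\|u_{xx}\|_{L^2}^2$ and $\|w\|_{H^1}^2$; this produces a local existence time $\mathcal{T}\sim Y(0)^{-2}$ together with all the regularities announced for $u$, in particular $u\in L^\infty(0,\mathcal{T};H^1)\cap L^2(0,\mathcal{T};H^2)$ and $u_{xz}\in L^\infty(0,\mathcal{T};L^2)$.

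The a priori estimate is obtained by pairing (\ref{PP-u}) in $L^2(\mathbb{T}^2)$ successively with $u$, with $-\Delta u$, and with $u_{xxzz}$. The first two tests are routine: the nonlinear contribution $(uu_x+wu_z,u)$ vanishes by incompressibility, and the pressure contributions unfold, via $p_z=-\ee_2 w$, $u_x=-w_z$, integration by parts, and (\ref{ESw})--(\ref{ESwx}), into the coercive quantities $\ee_2\|w\|_{L^2}^2$ from $(p_x,u)$ and $\ee_2(\|u_x\|_{L^2}^2+\|w_x\|_{L^2}^2)$ from $(p_x,-\Delta u)$. The test with $u_{xxzz}$ is the decisive one; after two integrations by parts in the time and principal terms it reads
\begin{equation*}
\tfrac{1}{2}\tfrac{d}{dt}\|u_{xz}\|_{L^2}^2+\nu\|u_{xzz}\|_{L^2}^2+\ee_1\|u_{xz}\|_{L^2}^2+(p_x,u_{xxzz})=-(uu_x+wu_z,u_{xxzz}),
\end{equation*}
and for the pressure term further integrations by parts combined with $p_{xz}=-\ee_2 w_x$ and $w_{xz}=-u_{xx}$ yield the crucial identity $(p_x,u_{xxzz})=\ee_2\|u_{xx}\|_{L^2}^2$. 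Thus the weak damping $\ee_2 w$ furnishes exactly the missing $\|u_{xx}\|_{L^2}^2$ dissipation and closes the estimate without ever requiring $u_{xx}|_{t=0}\in L^2$. All nonlinear cross terms produced by the three tests are handled, as in the computations of $I_1$--$I_6$ in section 3.2.2, by Lemma 1 combined with Young's inequality, with the good dissipation absorbing the top-order contributions and the remaining lower-order contributions folding into $CY^3$.

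Uniqueness and continuous dependence follow from a standard $L^2$ difference estimate. Writing $u=u_1-u_2$, $w=w_1-w_2$, $p=p_1-p_2$ for two strong solutions with respective initial data, subtraction gives a linear perturbation equation whose $L^2$ pairing with $u$, using the pressure identity and the bound $\|w\|_{L^2}\le\|u_x\|_{L^2}$ from (\ref{ESw}), produces, in analogy with section 3.2.3,
\begin{equation*}
\tfrac{d}{dt}\|u\|_{L^2}^2+2\nu\|u_z\|_{L^2}^2+2\ee_1\|u\|_{L^2}^2+2\ee_2\|w\|_{L^2}^2\le C\bigl(1+\|\nabla u_2\|_{L^\infty}^2\bigr)\|u\|_{L^2}^2,
\end{equation*}
after controlling the linearized transport term $\bigl|\int(u(u_2)_x+w(u_2)_z)u\,dx\,dz\bigr|$ by H\"older's and Young's inequalities. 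The integrability $\|\nabla u_2\|_{L^\infty}\in L^2(0,\mathcal{T})$ is supplied by Lemma 2 together with the regularity of $u_2$ already established, and Gr\"onwall's inequality then concludes the argument. The main obstacle of the whole proof is to identify the beneficial sign in the pressure pairing, $(p_x,u_{xxzz})=\ee_2\|u_{xx}\|_{L^2}^2$: it is precisely through this nonlocal contribution that the weak damping $\ee_2$ compensates for the absence of any horizontal viscosity, and once this identity is in place the remainder of the argument parallels section 3.2 with only cosmetic modifications.
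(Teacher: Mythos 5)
Your overall architecture (Galerkin in $\mathcal{E}_m$, formal a priori estimates, Aubin--Lions, $L^2$ difference estimate) matches the paper's, and you correctly identify the decisive coercivity mechanism: the identity $(p_x,u_{xxzz})=\ee_2\|u_{xx}\|_{L^2}^2$ is exactly the term $\ee_2\|w_{xz}\|_{L^2}^2$ the paper extracts by pairing $\ee_2 w+p_z=0$ with $w_{xxzz}$. However, the a priori estimate as you set it up does not close. After integration by parts the nonlinear contribution of the $u_{xxzz}$ test reduces to $\int_{\mathbb{T}^2}(u_xu_{xz}+w_xu_{zz})u_{xz}\,dxdz$, and the piece $\int_{\mathbb{T}^2} w_xu_{zz}u_{xz}\,dxdz$ carries a factor $\|u_{zz}\|_{L^2}$ that is neither contained in your $Y=1+\|u\|_{H^1}^2+\|u_{xz}\|_{L^2}^2$ nor absorbable: every application of Lemma 1 plus Young leaves either a remainder of the shape $\|u_{xz}\|_{L^2}\cdot(\text{dissipation of total degree }2)\le C\,Y^{1/2}D$, or, after absorbing the top-order factors $\|w_{xz}\|_{L^2}$ and $\|u_{xzz}\|_{L^2}$, a remainder $C\|u_{zz}\|_{L^2}^2Y^2$; neither is bounded by $\epsilon D+CY^3$ for large data. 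The paper's resolution is structural and your proposal misses it: at the first level (testing with $u$ and $-u_{zz}$, together with the $w$-equation tested with $w$ and $-w_{zz}$) the nonlinearity and the pressure cancel \emph{exactly}, so the energy identity for $\|u\|_{L^2}^2+\|u_z\|_{L^2}^2$ has identically zero right-hand side and holds globally; this yields $K:=1+\|u\|_{L^2}^2+\|u_z\|_{L^2}^2+\|u_{zz}\|_{L^2}^2\in L^1(0,\mathcal{T})$ for every $\mathcal{T}$, and the second-level quantity $Y=1+\|u_x\|_{L^2}^2+\|u_{xz}\|_{L^2}^2$ then satisfies the Riccati inequality $Y'+\nu F+\ee_2 G\le CK(t)Y^2$ with integrable coefficient; the local time is fixed by $\int_0^{\mathcal{T}}K\,ds=\frac{1}{2CY(0)}$, not by $Y(0)^{-2}$. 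Note also that testing with $-\Dd u$ rather than $-u_{zz}$ destroys the exact first-level cancellation, since the $-u_{xx}$ part of the nonlinearity does not vanish and re-couples the low-order energy to the higher norms.

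A second gap is in the uniqueness step: you invoke $\|\nabla u_2\|_{L^\infty}\in L^2(0,\mathcal{T})$, but $\|(u_2)_x\|_{L^\infty}$ is not controlled by the regularity available here --- Lemma 2 would require $(u_2)_{xxz}\in L^2$, which is never obtained (this is precisely the saving of the special case, cf.\ Remark 1: one avoids all bounds on second horizontal derivatives beyond $u_{xx}=-w_{xz}$). The paper circumvents this by integrating by parts in the linearized transport term, $\int_{\mathbb{T}^2} u^2(u_2)_x\,dxdz=-\int_{\mathbb{T}^2} u^2(w_2)_z\,dxdz=2\int_{\mathbb{T}^2} uu_zw_2\,dxdz$, trading $(u_2)_x$ for $w_2$, and closes Gronwall with the available bounds $w_2,\,(u_2)_z\in L^2(0,\mathcal{T};L^\infty)$ supplied by Lemma 2 and the estimates on $w_2$ and $u_2$.
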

\begin{proof}
For sake of simplicity, we will only do\textit{ a priori} estimates formally here, and we can use Galerkin method, as remarked in section 3.2, to prove the result rigorously. We denote by $\| \cdot \|:= \| \cdot \|_{L^2(\mathbb{T}^2)}$.
By taking the inner product of equation  (\ref{PP-u}) with $u$, $-u_{zz}$, and
equation  (\ref{PP-3a}) with $w$, $-w_{zz}$, in $L^2(\mathbb{T}^2)$,  we get
\begin{eqnarray*}
	&&\hskip-.68in \frac{1}{2} \frac{d} {dt} \big(\|u\|^{2} + \|u_z\|^{2}\big)
	+ \nu\, \big(\|u_z\|^2 + \|u_{zz}\|^2\big) + \ee_1 \big(\|u\|^2 + \|u_z\|^2 \big)+ \ee_2  \big(\|w\|^{2} + \|w_z\|^{2}\big)  \\
	&&\hskip-.78in
	=  -\int_{\mathbb{T}^2} \left(u u_x+wu_z
	\right)\; (u-u_{zz}) \; dxdz -\int_{\mathbb{T}^2} \Big(p_x\left(u-u_{zz}\right) + p_z \left(w-w_{zz}\right)\Big)\; dxdz.
\end{eqnarray*}
By integration by parts, thanks to (\ref{PPBC}), (\ref{PPwpBC-1}) and (\ref{PP-4a}), we have
\begin{eqnarray*}
	&&\hskip-.68in  -\int_{\mathbb{T}^2} \left(u u_x+wu_z
	\right)\; (u-u_{zz}) \; dxdz -\int_{\mathbb{T}^2} \Big(p_x\left(u-u_{zz}\right) + p_z \left(w-w_{zz}\right)\Big)\; dxdz = 0.
\end{eqnarray*}
Thanks to Gronwall inequality, we obtain
\begin{eqnarray*}
	&&\hskip-.28in \|u(t)\|^2 + \|u_z(t)\|^2 + 2\int_0^t \Big[\nu \big(\|u_z(s)\|^2 + \|u_{zz}(s)\|^2 \big) +  \ee_2 \big(\|w(s)\|^2 + \|w_z(s)\|^2 \big) \Big]ds
	 \leq \|u(0)\|^2 + \|u_z(0)\|^2.
\end{eqnarray*}
From the estimates above, we obtain
\begin{eqnarray}
&&\hskip-.68in
u, \; u_z  \; \text{bounded in} \; L^{\infty}(0,\mathcal{T};L^2), \nonumber\\
&&\hskip-.68in
w, \;u_{zz}, \; w_z=-u_x   \; \text{bounded in} \; L^{2}(0,\mathcal{T};L^2), \label{DB-1}
\end{eqnarray}
for arbitrary $\mathcal{T} >0$. By taking the inner product of equation  (\ref{PP-u}) with $-u_{xx}, u_{xxzz}$ and
equation  (\ref{PP-3a}) with $-w_{xx}$, $w_{xxzz}$ in $L^2(\mathbb{T}^2)$, integrating by parts, thanks to (\ref{PPBC}) and (\ref{PPwpBC-1}) we get
\begin{eqnarray}
&&\hskip-.68in \frac{1}{2} \frac{d }{dt}(\|u_x\|^{2} + \|u_{xz}\|^{2})
+ \nu\, (\|u_{xz}\|^2 + \|u_{xzz}\|^2) + \ee_1  (\|u_x\|^{2} +  \|u_{xz}\|^{2}) + \ee_2  (\|w_x\|^{2} +  \|w_{xz}\|^{2}) \nonumber\\
&&\hskip-.65in  = \int_{\mathbb{T}^2} (uu_x + wu_z)
\;  (u_{xx} - u_{xxzz})  \; dxdz  + \int_{\mathbb{T}^2} \Big(p_x (u_{xx}-u_{xxzz}) + p_z( w_{xx}-w_{xxzz})\Big)  dxdz. \label{D}
\end{eqnarray}
By integration by parts, thanks to (\ref{PPBC}), (\ref{PPwpBC-1}) and (\ref{PP-4a}), we have
\begin{eqnarray*}
	&&\hskip-.68in  \int_{\mathbb{T}^2} \Big(p_x (u_{xx}-u_{xxzz}) + p_z( w_{xx}-w_{xxzz})\Big)  dxdz = 0.
\end{eqnarray*}
Therefore, we have
\begin{eqnarray*}
	&&\hskip-.68in \frac{1}{2} \frac{d }{dt}(\|u_x\|^{2} + \|u_{xz}\|^{2})
	+ \nu\, (\|u_{xz}\|^2 + \|u_{xzz}\|^2) + \ee_1  (\|u_x\|^{2} +  \|u_{xz}\|^{2}) + \ee_2  (\|w_x\|^{2} +  \|w_{xz}\|^{2}) \nonumber\\
	&&\hskip-.65in  \leq |\int_{\mathbb{T}^2} (uu_x + wu_z)
	\;  (u_{xx} - u_{xxzz})  \; dxdz| =: |I_1+I_2|.
\end{eqnarray*}
Let us denote by
$$ Y:=1+\|u_x\|^{2} + \|u_{xz}\|^{2}, \;\; F:=\|u_{xz}\|^2 + \|u_{xzz}\|^2,$$
$$ G:= \|w_x\|^{2}  + \|w_{xz}\|^{2}, \; \; K :=1+ \|u\|^{2} + \|u_{z}\|^{2} + \|u_{zz}\|^{2}. $$
By integration by parts and Lemma 1, using Young's inequality, thanks to (\ref{ESw}), (\ref{ESwx}), (\ref{PPBC}), (\ref{PPwpBC-1}) and (\ref{PP-4a}), we have
\begin{eqnarray*}
	&&\hskip-.48in
	|I_1| = \left|\int_{\mathbb{T}^2} \left( u u_{x}+w u_{z}
	\right) u_{xx} dxdz \right|
	\leq C \|u\|_{L^2}^{\frac{1}{2}} (\|u\|_{L^2}^{\frac{1}{2}} + \|u_x\|_{L^2}^{\frac{1}{2}}) \|u_x\|_{L^2}^{\frac{1}{2}} (\|u_x\|_{L^2}^{\frac{1}{2}} + \|u_{xz}\|_{L^2}^{\frac{1}{2}})\|w_{xz}\|_{L^2} \\
	&&\hskip -.25in + C \|u_z\|_{L^2}^{\frac{1}{2}} (\|u_z\|_{L^2}^{\frac{1}{2}} + \|u_{xz}\|_{L^2}^{\frac{1}{2}}) \|w\|_{L^2}^{\frac{1}{2}} (\|w\|_{L^2}^{\frac{1}{2}} + \|w_{z}\|_{L^2}^{\frac{1}{2}}) \|w_{xz}\|_{L^2} \leq \frac{\ee_2}{4}G +  CKY^2, \\
	&&\hskip-.48in	
	|I_2| = \left|\int_{\mathbb{T}^2} \left[ u u_{x}+w u_{z}
	\right]\; u_{xxzz} \; dxdz\right|=
	\left|  \int_{\mathbb{T}^2} \left( u_xu_{xz}+w_x u_{zz}
	\right)\; u_{xz} \; dxdz \right|  \\
	&&\hskip-.25in
	\leq C \Big[  \|u_x\|_{L^2}^{\frac{1}{2}} (\|u_x\|_{L^2}^{\frac{1}{2}} + \|w_{xz}\|_{L^2}^{\frac{1}{2}}) \|u_{xz}\|_{L^2}^{\frac{3}{2}} (\|u_{xz}\|_{L^2}^{\frac{1}{2}} + \|u_{xzz}\|_{L^2}^{\frac{1}{2}})\\
	&&\hskip-.05in
	+ \|u_{xz} \|_{L^2}   \|w_{x}\|_{L^2}^{\frac{1}{2}} ( \|w_{x}\|_{L^2}^{\frac{1}{2}}+ \|w_{xz}\|_{L^2}^{\frac{1}{2}}) \|u_{zz}\|_{L^2}^{\frac{1}{2}} (\|u_{zz}\|_{L^2}^{\frac{1}{2}}+ \|u_{xzz}\|_{L^2}^{\frac{1}{2}})\Big]  \leq \frac{\ee_2}{4}G + \frac{\nu}{2}F + CKY^2.
\end{eqnarray*}
From the estimates above and by (\ref{DB-1}), we have
\begin{eqnarray}
&&\hskip-.68in \frac{dY}{dt} + \nu F + \ee_2 G \leq C K Y^2, \; \; \; \; \text{with}  \;\; K\in L^1(0,\mathcal{T}) \;\;\text{for arbitrary} \; \;\mathcal{T} >0.\label{PP-5}
\end{eqnarray}
Therefore, we have $\frac{dY}{dt} \leq C K Y^2$, and this implies that
$$Y(t)\leq \frac{Y(0)}{1-Y(0)C\int_0^t Kds}.$$
Let $\mathcal{T}$ be such that $$\int_0^\mathcal{T} Kds = \frac{1}{2Y(0)C}.$$
From above, we will have $Y(t)\leq 2Y(0)$ on $[0,\mathcal{T}]$.
Plugging it in (\ref{PP-5}), we have
\begin{eqnarray*}
	&&\hskip-.68in \frac{dY}{dt} + \nu F + \ee_2 G \leq 4C K Y(0)^2,  \;\text{for}\; t\in[0,\mathcal{T}].
\end{eqnarray*}
Integrating above from 0 to t for any time $t\in[0,\mathcal{T}]$, we obtain
\begin{eqnarray*}
	&&\hskip-.68in Y(t) + \int_0^t \left(\nu F(s) + \ee_2 G(s) \right) \; ds \leq Y(0)+ 4C Y(0)^2 \int_0^t K(s) ds.
\end{eqnarray*}
From the estimates above, by virtue of (\ref{PP-4a}), (\ref{DB-1}) and (\ref{ESw}), we obtain
\begin{eqnarray}
&&\hskip-.68in
u  \in L^{\infty}(0,\mathcal{T};H^1)\cap L^{2}(0,\mathcal{T};H^2), \;\; u_{xz}\in L^{\infty}(0,\mathcal{T};L^2), \;\; u_{xzz} \in L^2(0,\mathcal{T};L^2), \label{DB-3} \\
&&\hskip-.68in
w, w_z, w_{zz}\in L^{\infty}(0,\mathcal{T};L^2), \;\; w_x, w_{xz}\in L^{2}(0,\mathcal{T};L^2) \label{DB-4}.
\end{eqnarray}
Using Galerkin method, as remarked in section 3.2, we can obtain local existence of strong solution to system (\ref{PP-u}) with (\ref{PPw})--(\ref{PPpz}), subjects to (\ref{PPBC})--(\ref{PPIC}).
Next,  we show the continuous dependence of solutions on the initial data and
the uniqueness of the strong solutions. Let $(u_1, w_1, p_1)$ and
$(u_2, w_2, p_2)$ be two strong solutions of system
(\ref{PP-u}) with (\ref{PPw})--(\ref{PPpz}), and initial data $(u_0)_1$ and
$(u_0)_2$, respectively. Denote by $u=u_1-u_2,  w=w_1-w_2, p=p_1 -p_2.$
It is clear that
\begin{eqnarray}
&&\hskip-.8in
\pp_t u   + u_1 u_x + w_1 u_z  + u (u_2)_x + w (u_2)_z  + \ee_1 u - \nu u_{zz} + p_x = 0,  \label{uPP-1}  \\
&&\hskip-.8in
\ee_2 w + p_z  =0.   \label{uPP-2}
\end{eqnarray}
By taking the inner product of equation (\ref{uPP-1}) with $u$, (\ref{uPP-2}) with  $w$
in $L^2(\mathbb{T}^2)$, we have
\begin{eqnarray*}
	&&\hskip-.28in \frac{1}{2} \frac{d \|u\|^{2}}{dt} + \ee_1 \|u\|^2
	+ \ee_2  \|w\|^{2} + \nu \|u_z\|^{2} = \int_{\mathbb{T}^2} u(u_1 u_x + w_1 u_z + u (u_2)_x + w (u_2)_z) + (p_xu + p_z w) \; dxdz  .
\end{eqnarray*}
By integration by parts, thanks to (\ref{PPBC}), (\ref{PPwpBC-1}) and (\ref{PP-4a}), we have
\begin{eqnarray*}
	&&\hskip-.8in \int_{\mathbb{T}^2} u(u_1 u_x + w_1 u_z ) + (p_xu + p_z w) \; dxdz  = 0 .
\end{eqnarray*}
Therefore, we have
\begin{eqnarray*}
	&&\hskip-.68in \frac{1}{2} \frac{d \|u\|^{2}}{dt} + \ee_1 \|u\|^2
	+ \ee_2  \|w\|^{2} + \nu \|u_z\|^{2} \leq |\int_{\mathbb{T}^2} u\Big( u (u_2)_x + w (u_2)_z\Big) \;dxdz| =: |I_1+I_2|  .
\end{eqnarray*}
From (\ref{DB-3}) and (\ref{DB-4}), and by lemma 2, we obtain that $w_2, (u_2)_z \in L^{2}(0,\mathcal{T};L^\infty)$. Therefore, using Young's inequality and H\"older inequality, we have
\begin{eqnarray*}
	&&\hskip-.68in |I_1| = |\int_{\mathbb{T}^2} u^2 (u_2)_x dxdz | = | \int_{\mathbb{T}^2} u^2 (w_2)_z dxdz | = | 2 \int_{\mathbb{T}^2} u u_z w_2 \; dxdz | \\
	&&\hskip-.38in
	 \leq \int_{\mathbb{T}^2} \left( \frac{\nu}{2} |u_z|^2 + C|uw_2|^2  \right) dxdz \leq C\|w_2\|_{L^\infty}^2 \|u\|^2 + \frac{\nu}{2} \|u_z\|^2,
\end{eqnarray*}
\begin{eqnarray*}
	&&\hskip-.68in |I_2| = | \int_{\mathbb{T}^2} u w (u_2)_z dxdz | \leq \int_{\mathbb{T}^2} \left( \frac{\ee_2}{2} |w|^2 + C|u(u_2)_z|^2 \right) dxdz  \leq C\|(u_2)_z\|_{L^\infty}^2 \|u\|^2 + \frac{\ee_2}{2} \|w\|^2.
\end{eqnarray*}
From the estimates above, we obtain
\begin{eqnarray*}
	&&\hskip-.68in \frac{d}{dt}\|u\|^2 + \ee_1 \|u\|^2 + \ee_2 \|w\|^2 + \nu \|u_z\|^2 \leq C(\|w_2\|_{L^\infty}^2 + \|(u_2)_z\|_{L^\infty}^2)\|u\|^2.
\end{eqnarray*}
Thanks to Gronwall inequality, we have
\begin{eqnarray*}
	&&\hskip-.68in \|u(t)\|^2 \leq \|u(0)\|^2 \exp \left(C\int_0^t \left(\|w_2(s)\|_{L^\infty}^2 + \|(u_2)_z(s)\|_{L^\infty}^2 \right)ds \right).
\end{eqnarray*}
The above inequality proves the continuous dependence of the
solutions on the initial data, and in particular, when
$u(t=0)=0$, we have $u(t)=0,$ for all $t\in [0, \mathcal{T}]$.
Therefore, the strong solution is unique.
\end{proof}
\subsection{Global Well-posedness with Small Initial Data}
In this section, we will show the following result concerning the global existence and uniqueness of strong solutions to system (\ref{EQ-1})--(\ref{EQ-5}) with (\ref{EQw})--(\ref{EQpz}), subjects to boundary and initial conditions (\ref{BC-1})--(\ref{IC-1}), provided that the initial data is small enough.
\begin{theorem} \label{T-MAIN2}
	Suppose that $u_0, v_0, T_0, \partial_x u_{0}, \partial_x  v_{0}, \partial_x T_{0}\in H^1(\mathbb{T}^2)$ satisfy the symmetry conditions (\ref{BC-1}) and (\ref{BC-2}), with the compatibility condition $\int_0^1 \partial_x u_{0} dz = 0$. Moreover, suppose that $$\|u_0\|_{H^1}+ \|v_0\|_{H^1} + C_0 \|T_0\|_{H^1} + \|\partial_x u_{0}\|_{H^1} + \|\partial_x v_{0}\|_{H^1} + C_0 \|\partial_x T_{0}\|_{H^1} <<1$$ is small enough, for some $C_0>0$ determined in (\ref{C0}). Then for any time $\mathcal{T}>0$, there exists a unique strong solution $(u, v, T)$ of system
	(\ref{EQ-1})--(\ref{EQ-5}) with (\ref{EQw})--(\ref{EQpz}), subjects to (\ref{BC-1})--(\ref{IC-1}), on the interval $[0,\mathcal{T}]$. Moreover, the unique strong solution $(u, v, T)$ depends continuously on the initial data.
\end{theorem}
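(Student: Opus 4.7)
The plan is to upgrade the local strong solution provided by Theorem \ref{T-MAIN} to a global one by sharpening the a priori differential inequality derived in Section 3.2.2. Concretely, in place of the local bound $\tfrac{dY}{dt}+\nu F+\ee_2 G+\kappa K\le CY^{3}$, I will establish a coercive estimate
\begin{equation*}
\tfrac{d\tilde Y}{dt}+c_{1}\tilde Y\le C\tilde Y^{3/2},
\end{equation*}
valid as long as $\tilde Y$ remains bounded (say by $1$), where $\tilde Y$ is a \emph{weighted} version of $Y-1$, and $c_{1}, C$ depend only on $\nu,\kappa,\ee_{1},\ee_{2},f_{0}$. Once this inequality is in hand, a standard continuity/bootstrap argument closes global existence under a smallness hypothesis on the initial data, and uniqueness together with continuous dependence are inherited verbatim from Section 3.2.3 on each compact time interval.

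The first step is to restore coercivity at the linear level. The only structurally bad term on the right-hand side of the energy identity (\ref{local}) is $I_{7}=\int T(-w+\Dd w-\Dd w_{xx})$, which encodes the hydrostatic coupling $\ee_{2}w+p_{z}+T=0$; every other $I_{j}$ is genuinely nonlinear. Young's inequality combined with the Poincar\'e inequality in $z$ (applicable because $T$ is odd in $z$, hence has zero $z$-mean) yields $|I_{7}|\le \tfrac{\ee_{2}}{6}G + C_{\mathrm{lin}}\bigl(\|T\|_{L^{2}}^{2}+\|\nabla T\|_{L^{2}}^{2}+\|\nabla T_{x}\|_{L^{2}}^{2}\bigr)$, and the residual is in turn controlled by $(C_{\mathrm{lin}}/\kappa)\cdot\kappa K$. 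Choosing the temperature weight $C_{0}\sim C_{\mathrm{lin}}/\kappa$ large enough (this is the constant referenced as (\ref{C0}) in the statement) and setting
\begin{equation*}
\tilde Y:=\|u\|_{H^{1}}^{2}+\|v\|_{H^{1}}^{2}+\|\pp_{x} u\|_{H^{1}}^{2}+\|\pp_{x} v\|_{H^{1}}^{2}+C_{0}\bigl(\|T\|_{H^{1}}^{2}+\|\pp_{x} T\|_{H^{1}}^{2}\bigr),
\end{equation*}
the total linear dissipation $\nu F+\ee_{1}(\|u\|_{H^{1}}^{2}+\|v\|_{H^{1}}^{2}+\|\pp_x u\|_{H^{1}}^{2}+\|\pp_x v\|_{H^{1}}^{2})+\ee_{2}G+C_{0}\kappa K$ dominates $c_{1}\tilde Y$ by Poincar\'e.

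The second step is to revisit the nonlinear contributions $I_{1},\dots,I_{6},I_{8}$, each of which is trilinear in $(u,v,w,T)$ and their derivatives. Re-running the arguments of Section 3.2.2 with Lemma 1, Lemma 2 and Young's inequality, but now consistently bounding each factor either by $\tilde Y^{1/2}$ or by a piece of the dissipation $F, G, K$, produces estimates of the form $|I_{j}|\le \tfrac{1}{20}(\nu F+\ee_{2}G+C_{0}\kappa K)+C\tilde Y^{p_{j}}$ with $p_{j}\ge 3/2$. For $\tilde Y\le 1$ the terms with $p_{j}>3/2$ are absorbed into the one with $p_{j}=3/2$, which gives the displayed differential inequality. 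The third and final step is a bootstrap: fix $\delta>0$ so small that $2C\delta\le c_{1}$, assume the smallness hypothesis (which translates into $\tilde Y(0)<\delta^{2}$), let $\mathcal{T}_{\max}$ be the maximal existence time furnished by Theorem \ref{T-MAIN}, and set $T^{*}:=\sup\{t\in[0,\mathcal{T}_{\max}):\tilde Y(s)\le\delta^{2}\ \text{for all}\ s\in[0,t]\}$. On $[0,T^{*})$ the inequality reduces to $\tfrac{d\tilde Y}{dt}+\tfrac{c_{1}}{2}\tilde Y\le 0$, so $\tilde Y$ is non-increasing and stays strictly below $\delta^{2}$; by continuity this forces $T^{*}=\mathcal{T}_{\max}$, and then the standard continuation criterion for strong solutions gives $\mathcal{T}_{\max}=+\infty$.

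The main obstacle in this plan is the correct identification of the weight $C_{0}$: without it, the linear hydrostatic coupling $T\leftrightarrow w$ already prevents any coercive energy estimate, independently of the size of the initial data. Once this weight has been fixed so that the $T$--$w$ interaction is absorbed by the temperature diffusion, all remaining nonlinear refinements are quantitative sharpenings of the bounds already executed in Section 3.2.2 and are essentially mechanical.
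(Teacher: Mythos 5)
Your proposal follows essentially the same route as the paper: the same weighted energy with the temperature weight $C_0$ chosen exactly to let the Poincar\'e inequality (valid since $T$ is odd in $z$, hence of zero mean) absorb the linear $T$--$w$ coupling term $I_7$ into the temperature dissipation, the same term-by-term treatment of $I_1,\dots,I_6,I_8$ via Lemma 1, and the same continuity/bootstrap argument keeping $Y$ below its initial (small) value so that the local solution of Theorem \ref{T-MAIN} continues globally. The only difference is organizational: the paper absorbs the nonlinear terms multiplicatively into the dissipation coefficients (obtaining merely $\frac{dY}{dt}\le 0$ on the invariant small ball), whereas you Young-split them additively to get the coercive form $\frac{d\tilde Y}{dt}+c_1\tilde Y\le C\tilde Y^{3/2}$ — this works because every trilinear term carries the dissipative norms to total power strictly less than one, and it yields the marginally stronger conclusion of exponential decay, but it is a quantitative refinement of the same estimate rather than a different proof.
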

\begin{proof}
From Theorem \ref{T-MAIN}, we know there exists time $\mathcal{T}^*>0$ such that there is a unique strong solution $(u,v,T)$ of system (\ref{EQ-1})--(\ref{EQ-5}) with (\ref{EQw})--(\ref{EQpz}), subjects to (\ref{BC-1})--(\ref{IC-1}), on the interval $[0,\mathcal{T}^*]$. Assume the maximal time $\mathcal{T}$ for existence of solution is finite, then it is necessary to have $$\limsup\limits_{t\rightarrow \mathcal{T}^{-}} (\|u(t)\|_{H^1}+ \|v(t)\|_{H^1} + \|T(t)\|_{H^1} +\|u_{x}(t)\|_{H^1}+ \|v_{x}(t)\|_{H^1} + \|T_{x}(t)\|_{H^1} )=\infty. $$ We will prove this is not true for any finite time $\mathcal{T}>0$, and therefore $\mathcal{T}=\infty$. First, notice that since $T$ is an odd function with respect to $z$ variable, we have
\begin{eqnarray}
\int_{\mathbb{T}^2} T \;dxdz \equiv 0. \label{poincare}
\end{eqnarray}
By taking the $L^2$-inner product of equation  (\ref{EQ-1}) with $u, -\Dd u, \Dd u_{xx}$, equation  (\ref{EQ-2}) with $v, -\Dd v, \Dd v_{xx}$,
equation  (\ref{EQ-3}) with $w, -\Dd w, \Dd w_{xx}$ and
equation  (\ref{EQ-5}) with $C_0 T, -C_0 \Dd T, C_0 \Dd T_{xx}$, in $L^2(\mathbb{T}^2)$, by integration by parts, thanks to (\ref{BC-1}), (\ref{wpBC-1}) and (\ref{EQ-4}), we have
\begin{eqnarray*}
&&\hskip-.28in \frac{1}{2} \frac{d}{dt} \Big(\|u\|_{L^2}^{2} + \|\nabla u\|_{L^2}^{2} +  \|v\|_{L^2}^{2} + \|\nabla v\|_{L^2}^{2} + \|\nabla u_{x}\|_{L^2}^{2} + \|\nabla v_{x}\|_{L^2}^{2} +C_0\|T\|_{L^2}^{2} + C_0\|\nabla T\|_{L^2}^{2}  + C_0\|\nabla T_x\|_{L^2}^{2}\Big)\nonumber \\
&&\hskip-.08in
+ \nu \Big(\|u_z\|_{L^2}^{2} + \|v_z\|_{L^2}^{2} + \|\nabla u_z\|_{L^2}^{2} + \|\nabla v_z\|_{L^2}^{2}+ \|\nabla u_{xz}\|_{L^2}^{2} + \|\nabla v_{xz}\|_{L^2}^{2}\Big)
\nonumber \\
&&\hskip-.08in + \ee_1 \Big(\|u\|_{L^2}^{2} + \|\nabla u\|_{L^2}^{2} +  \|v\|_{L^2}^{2} + \|\nabla v\|_{L^2}^{2} + \|\nabla u_{x}\|_{L^2}^{2} + \|\nabla v_{x}\|_{L^2}^{2}\Big)
\nonumber \\
&&\hskip-.08in
+ \ee_2 \Big(\|w\|_{L^2}^{2} + \|\nabla w\|_{L^2}^{2}+ \|\nabla w_{x}\|_{L^2}^{2}\Big) + C_0 \kappa \Big( \|\nabla T\|_{L^2}^{2} + \|\Dd T\|_{L^2}^{2} + \|\Dd T_x\|_{L^2}^{2} \Big) \nonumber \\
&&\hskip-.28in
= \int_{\mathbb{T}^2} \left[ u u_{x}+w u_{z} - f_0 v + p_x
\right] \left(-u+ \Dd u - \Dd u_{xx}\right) + \left[ u v_{x}+w v_{z} + f_0 u
\right] \left(-v+ \Dd v - \Dd v_{xx}\right)\nonumber \\
&&\hskip-.08in
+ \left(  p_{z}+ T\right) \left(-w + \Dd w - \Dd w_{xx}\right) + C_0 \left(uT_x+wT_z\right)(-T+ \Dd T -\Dd T_{xx})\; dxdz\\
&&\hskip-.28in
= \int_{\mathbb{T}^2} \left( u u_{x}+w u_{z}
\right)\left(u_{xx} - u_{xxxx}- u_{xxzz}\right) + \left( u v_{x}+w v_{z}
\right) \left(\Dd v - v_{xxxx} - v_{xxzz}\right)
\\
&&\hskip-.08in + T \left(-w + \Dd w - \Dd w_{xx} \right) + C_0\left(uT_x+wT_z\right)(\Dd T - \Dd T_{xx})\; dxdz
=: I_1+I_2+I_3+I_4 +I_5+I_6+I_7+I_8.
\end{eqnarray*}
We denote by:
$$ Y:=\|u\|_{L^2}^{2} + \|\nabla u\|_{L^2}^{2} +  \|v\|_{L^2}^{2} + \|\nabla v\|_{L^2}^{2} + \|\nabla u_{x}\|_{L^2}^{2} + \|\nabla v_{x}\|_{L^2}^{2} +C_0\|T\|_{L^2}^{2} + C_0\|\nabla T\|_{L^2}^{2}  + C_0\|\nabla T_x\|_{L^2}^{2},$$
$$ F:=\|u_z\|_{L^2}^{2} + \|v_z\|_{L^2}^{2} + \|\nabla u_z\|_{L^2}^{2} + \|\nabla v_z\|_{L^2}^{2}+ \|\nabla u_{xz}\|_{L^2}^{2} + \|\nabla v_{xz}\|_{L^2}^{2},$$
$$ G:= \|w\|_{L^2}^{2} + \|\nabla w\|_{L^2}^{2}+ \|\nabla w_{x}\|_{L^2}^{2},$$
$$ H:= \|\nabla T\|_{L^2}^{2} + \|\Dd T\|_{L^2}^{2} + \|\Dd T_x\|_{L^2}^{2}, $$
$$ K:= \|u\|_{L^2}^{2} + \|\nabla u\|_{L^2}^{2} +  \|v\|_{L^2}^{2} + \|\nabla v\|_{L^2}^{2} + \|\nabla u_{x}\|_{L^2}^{2} + \|\nabla v_{x}\|_{L^2}^{2}. $$
By integration by parts, using Poincar\'e inequality, Young's inequality and Lemma 1, thanks to (\ref{BC-1}), (\ref{wpBC-1}), (\ref{EQ-4}), (\ref{ESw}), (\ref{ESwx}) and (\ref{poincare}), we have
\begin{eqnarray*}
	&&\hskip-.45in  |I_1| = \left|\int_{\mathbb{T}^2} \left( u u_{x}+w u_{z}
	\right) w_{xz} dxdz \right| \\
	&&\hskip-.20in
	\leq C \|u\|_{L^2}^{\frac{1}{2}} (\|u\|_{L^2}^{\frac{1}{2}} + \|u_z\|_{L^2}^{\frac{1}{2}}) \|u_x\|_{L^2}^{\frac{1}{2}} (\|u_x\|_{L^2}^{\frac{1}{2}} + \|u_{xx}\|_{L^2}^{\frac{1}{2}})\|w_{xz}\|_{L^2} \\
	&&\hskip-.15in + C \|u_z\|_{L^2}^{\frac{1}{2}} (\|u_z\|_{L^2}^{\frac{1}{2}} + \|u_{xz}\|_{L^2}^{\frac{1}{2}}) \|w\|_{L^2}^{\frac{1}{2}} (\|w\|_{L^2}^{\frac{1}{2}} + \|w_{z}\|_{L^2}^{\frac{1}{2}}) \|w_{xz}\|_{L^2}\\
	&&\hskip-.20in
	\leq C(\|w\|_{L^2}^2 + \|w_z\|_{L^2}^2 + \|w_{xz}\|_{L^2}^2)(\|u\|_{L^2}+ \|u_z\|_{L^2} + \|u_{xz}\|_{L^2}) \leq CGY^{1/2},
\end{eqnarray*}
\begin{eqnarray*}
	&&\hskip-.45in  |I_2| = \left|\int_{\mathbb{T}^2} \left[ u u_{x}+w u_{z}
	\right]\; u_{xxxx} \; dxdz\right|=
	\left|  \int_{\mathbb{T}^2} \left[ 3w_{z} w_{xz}+w_{xx} u_{z} + 2w_{x} u_{xz}
	\right]\; u_{xx} \; dxdz \right|  \\
	&&\hskip-.20in
	\leq    C \Big[  \|w_z\|_{L^2}^{\frac{1}{2}} (\|w_z\|_{L^2}^{\frac{1}{2}} + \|w_{xz}\|_{L^2}^{\frac{1}{2}}) \|w_{xz}\|_{L^2} \|u_{xx}\|_{L^2}^{\frac{1}{2}}(\|u_{xx}\|_{L^2}^{\frac{1}{2}} + \|u_{xxz}\|_{L^2}^{\frac{1}{2}})\\
	&&\hskip-.15in
	+ \|w_{xx} \|_{L^2}   \|u_{z}\|_{L^2}^{\frac{1}{2}} ( \|u_{z}\|_{L^2}^{\frac{1}{2}}+ \|u_{xz}\|_{L^2}^{\frac{1}{2}}) \|u_{xx}\|_{L^2}^{\frac{1}{2}} (\|u_{xx}\|_{L^2}^{\frac{1}{2}}+ \|u_{xxz}\|_{L^2}^{\frac{1}{2}})  \\
	&&\hskip-.15in
	+  \|w_{x} \|_{L^2}  \|u_{xz}\|_{L^2}^{\frac{1}{2}}(\|u_{xz}\|_{L^2}^{\frac{1}{2}} + \|u_{xxz}\|_{L^2}^{\frac{1}{2}} ) \|u_{xx}\|_{L^2}^{\frac{1}{2}} (\|u_{xx}\|_{L^2}^{\frac{1}{2}} +  \|u_{xxz}\|_{L^2}^{\frac{1}{2}}) \Big] \\
	&&\hskip-.20in
	\leq C( \|\nabla w_{x}\|_{L^2}^2 + \|\nabla u\|_{L^2}^2 + \|\nabla u_x\|_{L^2}^2 +  \|u_{xxz}\|_{L^2}^2 )(\|\nabla u\|_{L^2} + \|\nabla u_{x}\|_{L^2})\leq C(F+G+K)Y^{1/2},\\
	&&\hskip-.45in  |I_3| = \left|\int_{\mathbb{T}^2} \left[ u u_{x}+w u_{z}
	\right]\; u_{xxzz} \; dxdz\right|=
	\left|  \int_{\mathbb{T}^2} \left( u_xu_{xz}+w_x u_{zz}
	\right)\; u_{xz} \; dxdz \right|  \\
	&&\hskip-.20in
	\leq    C \Big[  \|u_x\|_{L^2}^{\frac{1}{2}} (\|u_x\|_{L^2}^{\frac{1}{2}} + \|u_{xz}\|_{L^2}^{\frac{1}{2}}) \|u_{xz}\|_{L^2}^{\frac{3}{2}} (\|u_{xz}\|_{L^2}^{\frac{1}{2}} + \|u_{xxz}\|_{L^2}^{\frac{1}{2}})\\
	&&\hskip-.15in
	+ \|u_{xz} \|_{L^2}   \|w_{x}\|_{L^2}^{\frac{1}{2}} ( \|w_{x}\|_{L^2}^{\frac{1}{2}}+ \|w_{xz}\|_{L^2}^{\frac{1}{2}}) \|u_{zz}\|_{L^2}^{\frac{1}{2}} (\|u_{zz}\|_{L^2}^{\frac{1}{2}}+ \|u_{xzz}\|_{L^2}^{\frac{1}{2}}) \Big]
	\leq C(F+G+K)Y^{1/2},\\
	&&\hskip-.45in  |I_4| = \left|\int_{\mathbb{T}^2} \left[ u v_{x}+w v_{z}
	\right]\; \Dd v \; dxdz\right|	\\
	&&\hskip-.20in
	\leq    C (\|u_{}\|_{L^2}+\|u_{z}\|_{L^2})(\|v_{x}\|_{L^2}+\|v_{xx}\|_{L^2})(\|v_{xx}\|_{L^2}+ \|v_{zz}\|_{L^2}) \\
	&&\hskip-.15in
	+ C(\|w_{}\|_{L^2}+\|w_{z}\|_{L^2})(\|v_{z}\|_{L^2} + \|v_{xz}\|_{L^2})(\|v_{xx}\|_{L^2}+ \|v_{zz}\|_{L^2}) \leq C(K+F)Y^{1/2},\\
	&&\hskip-.45in  |I_5| = |\int_{\mathbb{T}^2} \left[ u v_{x}+w v_{z}
	\right]\; v_{xxxx} dxdz | =
	|\int_{\mathbb{T}^2} \left[ u_{xx} v_{x} + w_{xx} v_{z} +2 u_x v_{xx} + 2w_{x} v_{xz} \right]\; v_{xx}  dxdz| \\
	&&\hskip-.20in
	\leq C \Big[ \|v_{xx}\|_{L^2}\|v_{x}\|_{L^2}^{\frac{1}{2}} (\|v_{x}\|_{L^2}^{\frac{1}{2}} + \|v_{xx}\|_{L^2}^{\frac{1}{2}}) \|u_{xx}\|^{\frac{1}{2}}_{L^2} (\|u_{xx}\|^{\frac{1}{2}}_{L^2} + \|u_{xxz}\|_{L^2}^{\frac{1}{2}})\\
	&&\hskip-.15in
	+ \|w_{xx}\|_{L^2}\|v_{z}\|_{L^2}^{\frac{1}{2}} (\|v_{z}\|_{L^2}^{\frac{1}{2}} + \|v_{xz}\|_{L^2}^{\frac{1}{2}}) \|v_{xx}\|_{L^2}^{\frac{1}{2}} ( \|v_{xx}\|_{L^2}^{\frac{1}{2}} + \|v_{xxz}\|_{L^2}^{\frac{1}{2}} )  \\
	&&\hskip-.15in
	+\|v_{xx}\|_{L^2}^{\frac{3}{2}}(\|v_{xx}\|_{L^2}^{\frac{1}{2}} + \|v_{xxz}\|_{L^2}^{\frac{1}{2}} )\|u_{x}\|_{L^2}^{\frac{1}{2}} ( \|u_{x}\|_{L^2}^{\frac{1}{2}} + \|u_{xx}\|_{L^2}^{\frac{1}{2}}) \\
	&&\hskip-.15in
	+ \|w_x\|_{L^2}^{\frac{1}{2}} (\|w_x\|_{L^2}^{\frac{1}{2}} + \|w_{xz}\|_{L^2}^{\frac{1}{2}})  \|v_{xz}\|_{L^2}^{\frac{1}{2}} (\|v_{xz}\|_{L^2}^{\frac{1}{2}} + \|v_{xxz}\|_{L^2}^{\frac{1}{2}}) \|v_{xx}\|_{L^2} \Big]  \leq C (K+F+G)Y^{1/2},\\
	&&\hskip-.45in  |I_6| = |\int_{\mathbb{T}^2} \left[ u v_{x}+w v_{z}
	\right]\; v_{xxzz} dxdz | =
	|\int_{\mathbb{T}^2} \left[ u_{xz} v_{x} + v_{xx} u_{z} - v_z u_{xx} + w_{x} v_{xz} \right]\; v_{xz}  dxdz| \\
	&&\hskip-.20in
	\leq C \Big[ \|v_{xz}\|_{L^2}\|v_{x}\|_{L^2}^{\frac{1}{2}} (\|v_{x}\|_{L^2}^{\frac{1}{2}} + \|v_{xz}\|_{L^2}^{\frac{1}{2}}) \|u_{xz}\|^{\frac{1}{2}}_{L^2} (\|u_{xz}\|^{\frac{1}{2}}_{L^2} + \|u_{xxz}\|_{L^2}^{\frac{1}{2}})\\
	&&\hskip-.15in
	+ \|v_{xz}\|_{L^2}\|u_{z}\|_{L^2}^{\frac{1}{2}} (\|u_{z}\|_{L^2}^{\frac{1}{2}} + \|u_{xz}\|_{L^2}^{\frac{1}{2}}) \|v_{xx}\|_{L^2}^{\frac{1}{2}} ( \|v_{xx}\|_{L^2}^{\frac{1}{2}} + \|v_{xxz}\|_{L^2}^{\frac{1}{2}} )  \\
	&&\hskip-.15in
	+\|v_{xz}\|_{L^2}\|v_{z}\|_{L^2}^{\frac{1}{2}} ( \|v_{z}\|_{L^2}^{\frac{1}{2}} + \|v_{xz}\|_{L^2}^{\frac{1}{2}})\|u_{xx}\|_{L^2}^{\frac{1}{2}}(\|u_{xx}\|_{L^2}^{\frac{1}{2}} + \|u_{xxz}\|_{L^2}^{\frac{1}{2}} ) \\
	&&\hskip-.15in
	+ \|v_{xz}\|_{L^2}\|w_x\|_{L^2}^{\frac{1}{2}} (\|w_x\|_{L^2}^{\frac{1}{2}} + \|w_{xz}\|_{L^2}^{\frac{1}{2}})  \|v_{xz}\|_{L^2}^{\frac{1}{2}} (\|v_{xz}\|_{L^2}^{\frac{1}{2}} + \|v_{xxz}\|_{L^2}^{\frac{1}{2}})  \Big] \leq C(K+F+G)Y^{1/2},\\
	&&\hskip-.45in  |I_7| = |\int_{\mathbb{T}^2} T \left(-w + \Dd w - \Dd w_{xx} \right) dxdz |
	\\
	&&\hskip-.20in
	\leq \|T\|_{L^2}\|w\|_{L^2}+ \|\nabla T\|_{L^2} \|\nabla w\|_{L^2} +\|\nabla T_x\|_{L^2} \|\nabla w_x\|_{L^2} \\
	&&\hskip-.20in
	\leq \frac{\ee_2}{2}G + \frac{1}{2\ee_2}(\|T\|_{L^2}^2 + \|\nabla T\|_{L^2}^2+\|\nabla T_x\|_{L^2}^2 ) \\
	&&\hskip-.20in
	\leq \frac{\ee_2}{2}G + \frac{1}{2\ee_2}(C_p \|\nabla T\|_{L^2}^2 + \|\nabla T\|_{L^2}^2+\|\nabla T_x\|_{L^2}^2 )\leq \frac{\ee_2}{2}G + \frac{1}{2\ee_2}(C_p+1)H,
	\end{eqnarray*}
where, thanks to (\ref{poincare}), we apply Poincar\'e inequality to obtain the last inequality.
\begin{eqnarray*}
	&&\hskip-.45in  |I_8| = C_0|\int_{\mathbb{T}^2} \left[ u T_{x}+w T_{z}
	\right]\; (\Dd T - \Dd T_{xx}) dxdz | \\
	&&\hskip-.20in
	\leq |\int_{\mathbb{T}^2} \left[ u T_{x}+w T_{z}
	\right]\; \Dd T  dxdz | + |\int_{\mathbb{T}^2} \left[ u_x T_{x} + uT_{xx} + wT_{xz}+w_x T_{z}
	\right]\;  \Dd T_{x} dxdz |\\
	&&\hskip-.20in
	\leq C_0 C \Big[ \|u\|_{L^2}^{\frac{1}{2}} (\|u\|_{L^2}^{\frac{1}{2}} + \|u_{z}\|_{L^2}^{\frac{1}{2}}) \|T_{x}\|^{\frac{1}{2}}_{L^2} (\|T_{x}\|^{\frac{1}{2}}_{L^2} + \|T_{xx}\|_{L^2}^{\frac{1}{2}}) \\
	&&\hskip.25in
	+ \|w\|_{L^2}^{\frac{1}{2}} (\|w\|_{L^2}^{\frac{1}{2}} + \|w_{z}\|_{L^2}^{\frac{1}{2}}) \|T_{z}\|^{\frac{1}{2}}_{L^2} (\|T_{z}\|^{\frac{1}{2}}_{L^2} + \|T_{xz}\|_{L^2}^{\frac{1}{2}}) \Big] \|\Dd T\|_{L^2}\\
	&&\hskip-.15in
	+ C_0 C\Big[ \|u_x\|_{L^2}^{\frac{1}{2}} (\|u_x\|_{L^2}^{\frac{1}{2}} + \|u_{xz}\|_{L^2}^{\frac{1}{2}}) \|T_{x}\|^{\frac{1}{2}}_{L^2} (\|T_{x}\|^{\frac{1}{2}}_{L^2} + \|T_{xx}\|_{L^2}^{\frac{1}{2}}) \\
	&&\hskip.25in
	+ \|u\|_{L^2}^{\frac{1}{2}} (\|u\|_{L^2}^{\frac{1}{2}} + \|u_{z}\|_{L^2}^{\frac{1}{2}}) \|T_{xx}\|^{\frac{1}{2}}_{L^2} (\|T_{xx}\|^{\frac{1}{2}}_{L^2} + \|T_{xxx}\|_{L^2}^{\frac{1}{2}}) \\
	&&\hskip.25in
	+ \|w\|_{L^2}^{\frac{1}{2}} (\|w\|_{L^2}^{\frac{1}{2}} + \|w_{x}\|_{L^2}^{\frac{1}{2}}) \|T_{xz}\|^{\frac{1}{2}}_{L^2} (\|T_{xz}\|^{\frac{1}{2}}_{L^2} + \|T_{xzz}\|_{L^2}^{\frac{1}{2}})\\
	&&\hskip.25in
	+ \|w_x\|_{L^2}^{\frac{1}{2}} (\|w_x\|_{L^2}^{\frac{1}{2}} + \|w_{xz}\|_{L^2}^{\frac{1}{2}}) \|T_{z}\|^{\frac{1}{2}}_{L^2} (\|T_{z}\|^{\frac{1}{2}}_{L^2} + \|T_{xz}\|_{L^2}^{\frac{1}{2}}) \Big] \|\Dd T_x\|_{L^2}
	\leq C_0 CHY^{1/2}.
\end{eqnarray*}
 From the estimates above, we obtain
 \begin{eqnarray*}
 	|I_1 + I_2 + I_3 + I_4 +I_5 + I_6 + I_7 + I_8| \leq C(K+G+F+C_0 H)Y^{\frac{1}{2}}+ \frac{\ee_2}{2}G + \frac{1}{2\ee_2}(C_p+1)H.
 \end{eqnarray*}
 Therefore, we obtain
 \begin{eqnarray}
 &&\hskip-.68in \frac{1}{2}\frac{dY}{dt}
 + \nu(1-\frac{C}{\nu}Y^{\frac{1}{2}}) F + \ee_1(1-\frac{C}{\ee_1}Y^{\frac{1}{2}})  K + \ee_2(\frac{1}{2}-\frac{C}{\ee_2}Y^{\frac{1}{2}})G + C_0 \kappa (1-\frac{C}{\kappa}Y^{\frac{1}{2}}-\frac{C_p+1}{2\ee_2 \kappa C_0 }) H \leq 0.
 \end{eqnarray}
 Choose
 \begin{eqnarray}
 &&\hskip-.68in C_0=\frac{C_p+1}{\ee_2 \kappa}. \label{C0}
 \end{eqnarray}
 Observe that if $Y_0 < \min(\frac{\nu^2}{C^2}, \frac{\ee_1^2}{C^2}, \frac{\ee_2^2}{4C^2}, \frac{\kappa^2}{4C^2})$, there exists $t^*>0$ such that $\frac{dY}{dt} \leq 0$ on $[0,t^*]$, and hence $Y(t) \leq Y_0 $ for $t\in[0, t^*]$, and in particular, $Y(t^*)< \min(\frac{\nu^2}{C^2}, \frac{\ee_1^2}{C^2}, \frac{\ee_2^2}{4C^2}, \frac{\kappa^2}{4C^2})$. Thus we can repeat this procedure to arbitrary time $t>0$ to get $Y(t) \leq Y_0 < \min(\frac{\nu^2}{C^2}, \frac{\ee_1^2}{C^2}, \frac{\ee_2^2}{4C^2}, \frac{\kappa^2}{4C^2})$ for all time. This implies the required bound for the global in time existence of strong solution.
\end{proof}

\section{Global Well-posedness of system (\ref{EQO1-1})--(\ref{EQO1-5})}
In this section we study system (\ref{EQO1-1})--(\ref{EQO1-5}) with $\nu=0$. Similar as in section 3.1, our domain is $\mathbb{T}^2$, and the boundary conditions and initial condition are
\begin{eqnarray}
&&\hskip-.8in
u, \;v, \; w,\;  p \;\text{and} \; T \;  \text{are periodic in} \; x \; \text{and} \; z \; \text{with period 1},  \label{ABC-1} \\
&&\hskip-.8in
u, \; v \; \text{and} \; p \; \text{are even in} \; z,\;  \text{and} \; w, \; T \; \text{are odd in} \; z, \label{ABC-2} \\
&&\hskip-.8in
(u, v, T)|_{t=0}=(u_0, v_0, T_0). \label{AIC-1}
\end{eqnarray}
Using analogue argument as in section 3.1, system (\ref{EQO1-1})--(\ref{EQO1-5}) subjects to (\ref{ABC-1})--(\ref{AIC-1}) is equivalent to the following:
\begin{eqnarray}
&&\hskip-.8in
(u-\alpha^2  u_{zz})_t  + u\, u_x + w u_z +\ee_1 u - f_0 v + p_x = 0,  \label{AEQ-1}  \\
&&\hskip-.8in
(v-\alpha^2  v_{zz})_t  + u\, v_x + w v_z +\ee_1 v + f_0 u = 0,  \label{AEQ-2}  \\
&&\hskip-.8in
T_t - \kappa \Dd T   + u \, T_x + w \, T_z =  0, \label{AEQ-5}
\end{eqnarray}
with $w,p_x,p_z$ defined by (\ref{EQw})--(\ref{EQpz}), subject to the symmetry boundary conditions and initial conditions (\ref{BC-1})--(\ref{IC-1}). We also have (\ref{EQ-3}) and (\ref{EQ-4}), for which we repeat here:
\begin{eqnarray}
&&\hskip-.8in
\ee_2 w + p_z  +  T =0,    \label{AEQ-3}  \\
&&\hskip-.8in
u_x + w_z =0.   \label{AEQ-4}
\end{eqnarray}
  In this section, we are interested in system (\ref{AEQ-1})--(\ref{AEQ-5}) with (\ref{EQw})--(\ref{EQpz}), subjects to (\ref{BC-1})--(\ref{IC-1}). We will show the global regularity of strong solution to system (\ref{AEQ-1})--(\ref{AEQ-5}) with (\ref{EQw})--(\ref{EQpz}), subject to (\ref{BC-1})--(\ref{IC-1}). First, we give the definition of strong solution to system (\ref{AEQ-1})--(\ref{AEQ-5}) with (\ref{EQw})--(\ref{EQpz}), subject to (\ref{BC-1})--(\ref{IC-1}).
\begin{definition}
	Suppose that $u_0, v_0 \in H^2(\mathbb{T}^2)$ and $T_0 \in H^1(\mathbb{T}^2)$ satisfy the symmetry conditions (\ref{BC-1}) and (\ref{BC-2}), with the compatibility condition $\int_0^1 \partial_x u_{0} dz = 0$. Moreover, suppose that $\partial^3_{xxz} u_{0}, \partial^3_{xxz} v_{0} \in L^2(\mathbb{T}^2)$. Given time $\mathcal{T}>0$, we say $(u,v,T)$ is a strong solution to the system (\ref{AEQ-1})--(\ref{AEQ-5}) with (\ref{EQw})--(\ref{EQpz}), subjects to (\ref{BC-1})--(\ref{IC-1}), on the time interval $[0,\mathcal{T}]$, if \\
	
	(i) u, v and T satisfy the symmetry conditions (\ref{BC-1}) and (\ref{BC-2});\\
	
	(ii) u, v and T have the regularities
	\begin{eqnarray*}
		&&\hskip-.28in
		u\in L^\infty(0,\mathcal{T};H^2)\cap C([0,\mathcal{T}]; H^1), \;\; u_{xxz}\in L^\infty(0,\mathcal{T};L^2), \;\; 	\partial_t u\in L^\infty(0,\mathcal{T};L^2)\cap L^2(0,\mathcal{T};H^1) , \\
		&&\hskip-.28in
		v\in L^\infty(0,\mathcal{T};H^2)\cap C([0,\mathcal{T}]; H^1), \;\; v_{xxz}\in L^\infty(0,\mathcal{T};L^2), \;\; 	\partial_t v\in L^\infty(0,\mathcal{T}; H^1), \\
		&&\hskip-.28in
		T\in L^2(0,\mathcal{T};H^2)\cap L^\infty(0,\mathcal{T};H^1)\cap C([0,\mathcal{T}]; L^2), \;\; 	\partial_t T \in L^2(0,\mathcal{T};L^2);
	\end{eqnarray*}
	
	(iii) u, v and T satisfy system (\ref{EQ-1})--(\ref{EQ-5}) in the following sense:
	\begin{eqnarray*}
		&&\hskip-.68in
		\partial_t(u-\alpha^2 u_{zz})+uu_x + wu_z+\ee_1 u -f_0 v + p_x = 0  \;\;  \text{in} \; \; L^\infty(0,\mathcal{T}; L^2)\cap L^2(0,\mathcal{T};H^1);\\
		&&\hskip-.68in
		\partial_t(v-\alpha^2 v_{zz})+uv_x + wv_z+\ee_1 v +f_0 u = 0  \;\;  \text{in} \; \; L^\infty(0,\mathcal{T}; H^1);\\
		&&\hskip-.68in
		\partial_t T -\kappa \Dd T +uT_x + wT_z= 0  \;\;  \text{in} \; \; L^2(0,\mathcal{T}; L^2),
	\end{eqnarray*}
	with $w,p_x,p_z$ defined by (\ref{EQw})--(\ref{EQpz}), and fulfill the initial condition (\ref{IC-1}).
\end{definition}
We have the following result concerning the existence and uniqueness of strong solutions to system (\ref{AEQ-1})--(\ref{AEQ-5}) with (\ref{EQw})--(\ref{EQpz}), subjects to (\ref{BC-1})--(\ref{IC-1}), on $\mathbb{T}^2 \times (0,\mathcal{T})$, for any positive time $\mathcal{T}$.

\begin{theorem} \label{T-MAIN3}
	Suppose that $u_0, v_0 \in H^2(\mathbb{T}^2)$ and $T_0 \in H^1(\mathbb{T}^2)$ satisfy the symmetry conditions (\ref{BC-1}) and (\ref{BC-2}), with the compatibility condition $\int_0^1 \partial_x u_{0} dz = 0$. Moreover, suppose that $\partial^3_{xxz} u_{0}, \partial^3_{xxz} v_{0} \in L^2(\mathbb{T}^2)$. Let time $\mathcal{T}>0$.
	Then there exists a unique strong solution $(u, v, T)$ of system
	(\ref{AEQ-1})--(\ref{AEQ-5}) with (\ref{EQw})--(\ref{EQpz}), subjects to (\ref{BC-1})--(\ref{IC-1}), on the interval $[0,\mathcal{T}]$. Moreover, the unique strong solution $(u, v, T)$ depends continuously on the initial data.
\end{theorem}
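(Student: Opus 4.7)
The plan is to adapt the three-step Galerkin / a priori estimate / uniqueness scheme used in Theorem~\ref{T-MAIN} (section~3.2), with the main new ingredient being the exploitation of the Voigt operator $1-\alpha^2\partial_{zz}$ on the time derivatives of $u,v$. First I would set up the Galerkin approximation on the same spaces $\mathcal{E}_m, \mathcal{O}_m$ introduced in section~3.2.1: on each $\mathcal{E}_m$ the operator $1-\alpha^2\partial_{zz}$ is a diagonal positive-definite matrix in the Fourier basis and hence invertible, so the projected system reduces to a first-order ODE with quadratic nonlinearity in the Fourier coefficients, yielding unique local-in-time Galerkin solutions by Picard--Lindel\"of.

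For the a priori estimates I would aim directly at the regularity level required by the definition of strong solution: $L^\infty H^2$ for $u,v$ together with $L^\infty L^2$ for $u_{xxz}, v_{xxz}$, and $L^\infty H^1 \cap L^2 H^2$ for $T$. The Voigt structure produces the natural test functions: pairing $(1-\alpha^2\partial_{zz})u_t$ with $\partial^{2\beta}u$ converts the time derivative into $\tfrac12\tfrac{d}{dt}(\|\partial^\beta u\|^2+\alpha^2\|\partial^\beta u_z\|^2)$. Accordingly I would test the $(u,v)$-equations with $u,\,-\Delta u,\,\Delta u_{xx}$ (and the analogues for $v$) and the $T$-equation with $T,\,-\Delta T,\,\Delta T_{xx}$, chosen so that at each level the pair $(\phi_u,\phi_w)$ used in the $u$-equation and in the hydrostatic balance $\ee_2 w+p_z+T=0$ satisfies $(\phi_u)_x+(\phi_w)_z=0$. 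As in the computation leading to~(\ref{local}), this together with $u_x+w_z=0$ forces the pressure contributions to cancel, and the Coriolis terms cancel pairwise between the $u$- and $v$-equations. The right-hand side is then reduced to the trilinear convective contributions.

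The estimate for the convective contributions is where the Voigt regularization and the damping $\ee_2$ together do the essential work. No viscosity on $(u,v)$ is available, but $\ee_2(\|w\|^2+\|\nabla w\|^2+\|\nabla w_x\|^2)$ appears on the dissipative side; because $w_z=-u_x$ this amounts to \emph{horizontal} dissipation on $u_x,u_{xx},u_{xz}$, the very derivatives that rendered the unregularized system~(\ref{EQ-1-5})--(\ref{EQ-5-5}) ill-posed. Combined with the $T$-diffusion $\kappa(\|\Delta T\|^2+\|\Delta T_x\|^2)$ and the observation that, thanks to Lemma~2, the quantity $\alpha^2\|u_{xxz}\|^2$ already present in the energy controls $\|u_x\|_{L^\infty}$ with an $\alpha$-dependent constant, I can bound every triple product $\int(uu_x+wu_z)\,\partial^{2\beta}u\,dxdz$ (and its $v$- and $T$-analogues) via Lemma~1 by an arbitrarily small fraction of the available dissipation plus a term of the form $K(t,\alpha)\,Y$ with $K$ locally integrable in time. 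Absorbing the dissipative fractions on the left and invoking Gronwall yields global-in-time $L^\infty$-control of $Y$ on any $[0,\mathcal{T}]$. These a priori bounds, derived first for the Galerkin system, are then propagated to the limit by Aubin--Lions (Proposition~3) using $L^2_t$ bounds on $\partial_t(u_m-\alpha^2(u_m)_{zz})$, $\partial_t(v_m-\alpha^2(v_m)_{zz})$ and $\partial_t T_m$ to produce strong convergence in the Sobolev spaces required to identify the nonlinear limits.

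Uniqueness and continuous dependence follow the pattern of section~3.2.3: given two solutions $(u_i,v_i,T_i)$ I set $(u,v,T)=(u_1-u_2,v_1-v_2,T_1-T_2)$, test the difference equations with $(u,v,T)$, exploit the same pressure and Coriolis cancellations, and bound the quadratic difference terms by $K(t)(\|u\|^2+\alpha^2\|u_z\|^2+\|v\|^2+\alpha^2\|v_z\|^2+\|T\|^2)$ with $K\in L^1_{\mathrm{loc}}$ built from $\|\nabla u_2\|_{L^\infty},\|\nabla v_2\|_{L^\infty},\|\nabla T_2\|_{L^\infty}$, which are finite by Lemma~2 applied to the already established regularity of the second solution. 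The hard part will be the closure of the a priori estimates: showing that at the $H^2+\alpha u_{xxz}$ level the trilinear convective contributions can be controlled globally in time \emph{without any horizontal parabolic smoothing of $u,v$}. The resolution combines the anisotropic Sobolev inequalities of Lemmas~1--2 with the $\ee_2$-damping-through-incompressibility and the extra $z$-derivative gained from the Voigt regularization, yielding a strictly stronger closure than in Theorem~\ref{T-MAIN}, where only local-in-time control was required.
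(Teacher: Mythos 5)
Your overall architecture (Galerkin on $\mathcal{E}_m,\mathcal{O}_m$ with the invertible Voigt operator, a priori estimates exploiting the energy $\|\partial^\beta u\|_{L^2}^2+\alpha^2\|\partial^\beta u_z\|_{L^2}^2$, the $\ee_2$-damping acting as weak horizontal dissipation through $w_z=-u_x$, and an $L^2$-difference estimate for uniqueness) matches the paper's, but there are two points where your plan as written would not close.

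First, your choice of multipliers $u,\,-\Dd u,\,\Dd u_{xx}$ is imported from the local theory of Theorem~\ref{T-MAIN}, where the vertical viscosity $\nu>0$ is essential: testing with $\Dd u_{xx}$ produces the term $\int(uu_x+wu_z)u_{xxzz}\,dxdz$, which after integration by parts requires absorbing $\|u_{xzz}\|_{L^2}^2$ and $\|u_{xxz}\|_{L^2}^2$ into $\nu F$. In Section~4 the system has $\nu=0$, so no such dissipation exists and this term cannot be controlled. The paper instead uses a ladder of \emph{anisotropic} multipliers --- $u$, then $-u_{zz}$, then $-u_{xx}$, then $-\Dd v$, then $-\Dd T$, and finally the purely horizontal fourth derivatives $u_{xxxx},v_{xxxx},w_{xxxx}$ --- precisely so that the only dissipation ever needed at the top level is $\ee_2\|w_{xx}\|_{L^2}^2$, with the $\alpha^2\|u_{xxz}\|_{L^2}^2$ and $\alpha^2\|v_{xxz}\|_{L^2}^2$ terms carried inside the energy rather than demanded of the dissipation. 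This also explains why the Coriolis terms do not simply cancel pairwise in the paper: the $u$- and $v$-estimates are performed with different multipliers at different stages, and $f_0$-terms such as $\int f_0 v_z u_z$ must be estimated directly.

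Second, and more importantly, your closure claim --- every trilinear term is bounded by a small fraction of the dissipation plus $K(t,\alpha)Y$ with $K$ locally integrable --- is exactly the crux, and it does not hold if all levels are lumped into a single $Y$. If $K$ is built from norms that are themselves components of $Y$ (as in your suggestion to control $\|u_x\|_{L^\infty}$ by $\alpha^2\|u_{xxz}\|_{L^2}^2$ from the energy), the resulting inequality is $\frac{dY}{dt}\lesssim Y^2$, which gives only local existence, no better than Theorem~\ref{T-MAIN}. The paper's global result rests on a strict hierarchy: at each stage the Gronwall coefficient consists only of quantities already proved to lie in $L^1(0,\mathcal{T})$ or $L^\infty(0,\mathcal{T})$ at earlier stages (e.g. $\|w\|_{L^2}^2+\|w_x\|_{L^2}^2\in L^1_t$ from the $\ee_2$-dissipation of the lower-order estimates, $\|u_z\|_{L^2}^2+\|u_{xz}\|_{L^2}^2\in L^\infty_t$ from the $-u_{zz}$ and $-u_{xx}$ tests, $\|T_{xx}\|_{L^2}^2\in L^1_t$ from the $-\Dd T$ test), so each differential inequality is \emph{linear} in its own energy and Gronwall applies on any $[0,\mathcal{T}]$. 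You need to make this cascade explicit; without it the argument does not yield global well-posedness.
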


\begin{proof}
 We will only do\textit{ a priori} estimates formally here, and we can use Galerkin method as in section 3.2 to prove the result rigorously. By taking the $L^2$-inner product of equation  (\ref{AEQ-1}) with $u$, equation  (\ref{AEQ-2}) with $v$,
 equation  (\ref{AEQ-3}) with $w$ and
 equation  (\ref{AEQ-5}) with $T$, in $L^2(\mathbb{T}^2)$, by integration by parts, thanks to (\ref{BC-1}), we get

 \begin{eqnarray*}
 	&&\hskip-.48in \frac{1}{2} \frac{d}{dt} \Big(\|u\|_{L^2}^{2}+\|v\|_{L^2}^{2}+\|T\|_{L^2}^{2}+ \alpha^2  \, \| u_{z}\|_{L^2}^2
 		+ \alpha^2  \, \| v_{z}\|_{L^2}^2
 		\Big) + \ee_1 \|u\|_{L^2}^{2} + \ee_1 \|v\|_{L^2}^{2}
 	+ \ee_2  \|w\|_{L^2}^{2} + \kappa  \, \| \nabla T\|_{L^2}^2 \\
 	&&\hskip-.45in
 	=  -\int_{\mathbb{T}^2} \left[ u u_{x}+w u_{z}
 	\right]\; u \; dxdz -\int_{\mathbb{T}^2} \left[ u v_{x}+w v_{z}
 	\right]\; v \; dxdz \\
 	&&\hskip-.25in -\int_{\mathbb{T}^2} \left[ u p_{x}+w p_{z}+w T
 	\right]\; dxdz -\int_{\mathbb{T}^2} \left[ u T_{x}+w T_{z}
 	\right]\; T \; dxdz.
 \end{eqnarray*}
 By integration by parts, thanks to (\ref{BC-1}), (\ref{wpBC-1}) and (\ref{AEQ-4}), we have
 \begin{eqnarray*}
 	&&\hskip-.45in   -\int_{\mathbb{T}^2} \left[ u u_{x}+w u_{z}
 	\right]\; u \; dxdz -\int_{\mathbb{T}^2} \left[ u v_{x}+w v_{z}
 	\right]\; v \; dxdz  \\
 	&&\hskip-.45in  -\int_{\mathbb{T}^2} \left[ u p_{x}+w p_{z}
 	\right]\; dxdz-\int_{\mathbb{T}^2} \left[ u T_{x}+w T_{z}
 	\right]\; T \; dxdz =0.
 \end{eqnarray*}
 By Cauchy--Schwarz inequality and Young's inequality, we have
 \begin{eqnarray*}
 	&&\hskip-.45in   -\int_{\mathbb{T}^2} wT dxdz \leq \|w\|_{L^2} \|T\|_{L^2} \leq \frac{\ee_2}{2} \|w\|_{L^2}^2 + C \|T\|_{L^2}^2.
 \end{eqnarray*}
 As a result of the above, we have
 \begin{eqnarray*}
 	&&\hskip-.48in
 	\frac{d}{dt} \Big(\|u\|_{L^2}^{2}+\|v\|_{L^2}^{2}+\|T\|_{L^2}^{2}+ \alpha^2  \, \| u_{z}\|_{L^2}^2
 		+ \alpha^2  \, \| v_{z}\|_{L^2}^2
 		\Big) + 2\ee_1 \|u\|_{L^2}^{2} + 2\ee_1 \|v\|_{L^2}^{2}
 	+ \ee_2  \|w\|_{L^2}^{2} + 2\kappa  \, \| \nabla T\|_{L^2}^2  \\
 	&&\hskip-.45in \leq C \|T\|_{L^2}^2 \leq C (\|u\|_{L^2}^{2}+\|v\|_{L^2}^{2}+\|T\|_{L^2}^{2}+ \alpha^2  \, \| u_{z}\|_{L^2}^2
 	+ \alpha^2  \, \| v_{z}\|_{L^2}^2).
 \end{eqnarray*}
 Thanks to Gronwall inequality, we obtain
 \begin{eqnarray*}
 	&&\hskip-.68in
 	(\|u\|_{L^2}^{2}+\|v\|_{L^2}^{2}+\|T\|_{L^2}^{2}+ \alpha^2  \, \| u_{z}\|_{L^2}^2
 	+ \alpha^2  \, \| v_{z}\|_{L^2}^2
 	)(t)
 	\\
 	&&\hskip-.48in
 	+
 	\int_0^t \left( 2\ee_1 \|u(s)\|_{L^2}^{2} + 2\ee_1 \|v(s)\|_{L^2}^{2} + \ee_2  \|w(s)\|_{L^2}^{2} + 2\kappa  \, \|\nabla T(s)\|_{L^2}^2 \right) \; ds  \nonumber \\
 	&&\hskip-.68in
 	\leq (\|u_{0}\|_{L^2}^{2}+\|v_{0}\|_{L^2}^{2}+\|T_{0}\|_{L^2}^{2}+ \alpha^2  \, \| \partial_z u_{0}\|_{L^2}^2
 	+ \alpha^2  \, \| \partial_{z} v_{0}\|_{L^2}^2) e^{Ct}.
 \end{eqnarray*}
 Consequently, we have
 \begin{eqnarray}
 u, v, u_z, v_z \in L^\infty(0,\mathcal{T};L^2), \nonumber \\
 w \in L^2(0,\mathcal{T};L^2), \nonumber \\
 T \in L^\infty(0,\mathcal{T};L^2)\cap L^2(0,\mathcal{T};H^1),  \label{L-2}
 \end{eqnarray}
for arbitrary $\mathcal{T}>0$. By taking the $L^2$-inner product of equation  (\ref{AEQ-1}) with $-u_{zz}$ and
equation  (\ref{AEQ-3}) with $-w_{zz}$  in $L^2(\mathbb{T}^2)$, by integration by parts, thanks to (\ref{BC-1}) and (\ref{wpBC-1}), we get
 \begin{eqnarray*}
 	&&\hskip-.28in \frac{1}{2} \frac{d (\|u_{z}\|_{L^2}^{2}+ \alpha^2  \, \| u_{zz}\|_{L^2}^2)}{dt} + \ee_1 \|u_z\|_{L^2}^{2}  +
 	\ee_2  \|w_{z}\|_{L^2}^{2} \\
 	&&\hskip-.28in
 	= \int_{\mathbb{T}^2} \left[ u u_{x}+w u_{z} - f_0 v
 	\right] u_{zz} + \left[ u_{zz} p_{x}+w_{zz} p_{z}+w_{zz} T
 	\right]\; dxdz.
 \end{eqnarray*}
 By integration by parts, thanks to (\ref{BC-1}), (\ref{wpBC-1}) and (\ref{AEQ-4}), we have
 \begin{eqnarray*}
 	&&\hskip-.45in   \int_{\mathbb{T}^2} \left[ u u_{x}+w u_{z}
 	\right] u_{zz} dxdz + \int_{\mathbb{T}^2} \left[ u_{zz} p_{x}+w_{zz} p_{z}
 	\right]\; dxdz \\
 	&&\hskip-.45in = -\int_{\mathbb{T}^2} [u_z u_x + u u_{xz} + w_z u_z + w u_{zz}]u_z dxdz - \int_{\mathbb{T}^2} p(u_x + w_z)_{zz} dxdz \\
 	&&\hskip-.45in = -\frac{1}{2}\int_{\mathbb{T}^2}u_z^2(u_x+w_z)dxdz - \int_{\mathbb{T}^2} p(u_x + w_z)_{zz} dxdz = 0.
 \end{eqnarray*}
 By integration by parts, using Cauchy--Schwarz inequality and Young's inequality, thanks to (\ref{BC-1}) and (\ref{wpBC-1}), we have
 \begin{eqnarray*}
 	&&\hskip-.45in   \int_{\mathbb{T}^2} \left(-f_0 v u_{zz} + w_{zz} T\right) dxdz = \int_{\mathbb{T}^2} \left(f_0 v_z u_{z} - w_{z} T_z\right) dxdz \leq f_0\|v_z\|_{L^2} \|u_z\|_{L^2} + \|w_z\|_{L^2} \|T_z\|_{L^2} \\
 	&&\hskip 1.1 in \leq \frac{\ee_2}{2} \|w_z\|_{L^2}^2 + C \|T_z\|_{L^2}^2 + C\|v_z\|_{L^2} (1+\|u_{z}\|^2_{L^2}+ \alpha^2   \| u_{zz}\|_{L^2}^2).
 \end{eqnarray*}
 As a result of the above, we have
 \begin{eqnarray*}
 	&&\hskip-.28in \frac{d (1+ \|u_{z}\|_{L^2}^{2}+ \alpha^2   \| u_{zz}\|_{L^2}^2)}{dt} + \ee_1 \|u_z\|_{L^2}^{2} +
 	\ee_2  \|w_{z}\|_{L^2}^{2}
 	\leq C \|v_{z}\|_{L^2}(1+\|u_{z}\|^2_{L^2}+ \alpha^2   \| u_{zz}\|_{L^2}^2)  + C \|T_{z}\|^2_{L^2}.
 \end{eqnarray*}
 Thanks to Gronwall inequality, we obtain
 \begin{eqnarray*}
 	&&\hskip-.68in
 	\|u_{z}(t)\|_{L^2}^{2}+ \alpha^2  \, \| u_{zz}(t)\|_{L^2}^2 +
 	\int_0^t \left(2\ee_1 \|u_z(s)\|_{L^2}^{2} + \ee_2 \|w_{z}(s)\|_{L^2}^{2}\right)  \; ds  \nonumber \\
 	&&\hskip-.68in
 	\leq C\left(1+ \int_0^t \|T_{z}(s)\|^2_{L^2}ds + \|\partial_z u_{0}\|_{L^2}^{2}+ \alpha^2  \, \| \partial^2_{zz} u_{0}\|_{L^2}^2\right) \exp\left(C \int_0^t \|v_z(s)\|_{L^2} ds\right).
 \end{eqnarray*}
 By virtue of (\ref{L-2}) and the above, we have
 \begin{eqnarray}
 u_{zz} \in L^\infty(0,\mathcal{T};L^2), \; \;  w_{z}=-u_{x} \in L^2(0,\mathcal{T};L^2), 	\label{L-2z}
 \end{eqnarray}
for arbitrary $\mathcal{T}>0$. By taking the $L^2$-inner product of equation  (\ref{AEQ-1}) with $-u_{xx}$ and
equation  (\ref{AEQ-3}) with $-w_{xx}$, in $L^2(\mathbb{T}^2)$, by integration by parts, thanks to (\ref{BC-1}) and (\ref{wpBC-1}), we get
 \begin{eqnarray*}
 	&&\hskip-.28in \frac{1}{2} \frac{d (\|u_{x}\|_{L^2}^{2}+ \alpha^2 \, \|u_{xz}\|_{L^2}^2 )}{dt}
 	+ \ee_1 \|u_x\|_{L^2}^{2}  + \ee_2  \|w_{x}\|_{L^2}^{2} \\
 	&&\hskip-.28in
 	= \int_{\mathbb{T}^2} \left[ u u_{x}+w u_{z} - f_0 v
 	\right] u_{xx} + \left[ u_{xx} p_{x}+w_{xx} p_{z}+w_{xx} T
 	\right]\; dxdz.
 \end{eqnarray*}
 By integration by parts, thanks to (\ref{BC-1}), (\ref{wpBC-1}) and (\ref{AEQ-4}), we have
 \begin{eqnarray*}
 	&&\hskip-.45in  \int_{\mathbb{T}^2} \left[ u_{xx} p_{x}+w_{xx} p_{z}
 	\right]\; dxdz =0.
 \end{eqnarray*}
 By integration by parts, using Cauchy--Schwarz inequality and Young's inequality, thanks to (\ref{BC-1}), (\ref{wpBC-1}) and (\ref{AEQ-4}), we have
 \begin{eqnarray*}
 	&&\hskip-.45in   -\int_{\mathbb{T}^2} f_0vu_{xx} dxdz = \int_{\mathbb{T}^2} f_0 v w_{xz} dxdz = -\int_{\mathbb{T}^2} f_0 v_z w_x dxdz \leq C \|v_{z}\|_{L^2}^2 + \frac{\ee_2}{6} \|w_{x}\|_{L^2}^{2}, \\
 	&&\hskip .05in \int_{\mathbb{T}^2} T w_{xx}\; dxdz = -\int_{\mathbb{T}^2} T_{x} w_{x}\; dxdz \leq C \|T_{x}\|_{L^2}^2 + \frac{\ee_2}{6} \|w_{x}\|_{L^2}^{2}.
 \end{eqnarray*}
 By integration by parts, using Young's inequality and Lemma 1, thanks to (\ref{BC-1}), (\ref{wpBC-1}) and (\ref{AEQ-4}), we have
 \begin{eqnarray*}
 	&&\hskip-.065in   \int_{\mathbb{T}^2} \left[ u u_{x}+w u_{z}
 	\right]\; u_{xx} \; dxdz = - \int_{\mathbb{T}^2} \left[ (u_{x})^3 +w_{x} u_{z} u_{x}
 	\right] \; dxdz \\
 	&&\hskip-.065in
 	=  - \int_{\mathbb{T}^2} \left[ -w_z (u_x)^2 +w_{x} u_{z} u_{x}
 	\right] \; dxdz = - \int_{\mathbb{T}^2} \left[ 2 w u_{x} u_{xz} + w_{x} u_{z} u_{x}
 	\right] \; dxdz   \\
 	&&\hskip-.065in  \leq    C \big[  \|w\|_{L^2}^{\frac{1}{2}} (\|w\|_{L^2}^{\frac{1}{2}}+\|w_{x}\|_{L^2}^{\frac{1}{2}}) \|u_{x}\|_{L^2}^{\frac{1}{2}} (\|u_{x}\|_{L^2}^{\frac{1}{2}}+\|u_{xz}\|_{L^2}^{\frac{1}{2}})\|u_{xz}\|_{L^2} \\
 	&&\hskip.15in
 	+ \|w_{x} \|_{L^2}  \|u_{z}\|_{L^2}^{\frac{1}{2}} (\|u_{z}\|_{L^2}^{\frac{1}{2}}+\|u_{xz}\|_{L^2}^{\frac{1}{2}}) \|u_{x}\|_{L^2}^{\frac{1}{2}} (\|u_{x}\|_{L^2}^{\frac{1}{2}}+\|u_{xz}\|_{L^2}^{\frac{1}{2}})
 	\big]   \\
 	&&\hskip-.065in
 	\leq    C \left( 1+ \|w\|_{L^2}^2 + \|u_{x}\|_{L^2}^2
 	+  \|u_{z}\|_{L^2}^2  \right) \left(1+ \|u_{x}\|_{L^2}^2+ \alpha^2\|u_{xz}\|_{L^2}^2 \right)
 	+ \frac{\ee_2}{6} \|w_{x}\|_{L^2}^{2}.
 \end{eqnarray*}
 From the estimates above,  we have

 \begin{eqnarray*}
 	&&\hskip-.68in
 	\frac{d (1+\|u_{x}\|_{L^2}^{2}+ \alpha^2 \, \|u_{xz}\|_{L^2}^2 )}{dt}
 	+ \ee_1 \|u_x\|_{L^2}^{2}  + \ee_2  \|w_{x}\|_{L^2}^{2} \\
 	&&\hskip-.65in \leq    C \left( 1+ \|w\|_{L^2}^2 + \|u_{x}\|_{L^2}^2
 	+  \|u_{z}\|_{L^2}^2 \right) \left(1+ \|u_{x}\|_{L^2}^2+ \alpha^2\|u_{xz}\|_{L^2}^2 \right) + C(\|v_{z}\|_{L^2}^2 + \|T_{x}\|_{L^2}^2).
 \end{eqnarray*}
 By Gronwall inequality, we obtain
 \begin{eqnarray*}
 	&&\hskip-.68in \|u_{x}(t)\|_{L^2}^{2}+ \alpha^2 \, \|u_{xz}(t)\|_{L^2}^2 +
 	\int_0^t  \left( 2\ee_1 \|u_x(s)\|_{L^2}^{2}  + \ee_2  \|w_{x}(s)\|_{L^2}^{2}\right)  \; ds   \nonumber  \\
 	&&\hskip-.68in
 	\leq C\left(1+\int_0^t \left(\|v_{z}(s)\|_{L^2}^2 + \|T_{x}(s)\|_{L^2}^2\; \right) ds + \|\partial_{x} u_{0}\|_{L^2}^{2}+ \alpha^2 \, \|\partial^2_{xz} u_{0}\|_{L^2}^2 \right) \nonumber  \\
 	&&\hskip-.38in
 	\times \exp\left(C \int_0^t \left(1+ \|w(s)\|_{L^2}^2 + \|u_{x}(s)\|_{L^2}^2
 	+  \|u_{z}(s)\|_{L^2}^2  \;  \right) ds\right).
 \end{eqnarray*}
 By virtue of (\ref{L-2}), (\ref{L-2z}) and the above, we have
 \begin{eqnarray}
 u, u_{z} \in L^{\infty}(0,\mathcal{T};H^1), \;\; w \in L^2(0,\mathcal{T};H^1), 	\label{LL-2x}
 \end{eqnarray}
for arbitrary $\mathcal{T}>0$. By virtue of (\ref{LL-2x}), (\ref{ESw}) and (\ref{ESwx}), we have
 \begin{eqnarray}
 w\in L^{\infty}(0,\mathcal{T};L^2)\cap L^2(0,\mathcal{T};H^1), 	\label{LL-2x2}
 \end{eqnarray}
for arbitrary $\mathcal{T}>0$. By taking the $L^2$-inner product of equation  (\ref{AEQ-2}) with $-\Dd v$ in $L^2(\mathbb{T}^2)$, and by integration by parts, thanks to (\ref{BC-1}), we have
 \begin{eqnarray*}
 	&&\hskip-.68in \frac{1}{2} \frac{d (\|\nabla v\|_{L^2}^{2}+ \alpha^2  \, \| \nabla v_{z}\|_{L^2}^2)}{dt} + \ee_1  \|\nabla v\|_{L^2}^{2} \\
 	&&\hskip-.68in
 	=   \int_{\mathbb{T}^2} \Big[(u v_{x}+w v_{z}) \; (v_{xx} + v_{zz}
 	)+ f_0 u \Dd v \Big] dxdz  =: \rom{1}+\rom{2}+\rom{3}+\rom{4}+\rom{5}. \\
 \end{eqnarray*}
 By integration by parts, using Cauchy--Schwarz inequality and Lemma 1, thanks to (\ref{BC-1}), (\ref{wpBC-1}) and (\ref{AEQ-4}), we have
 \begin{eqnarray*}
 	&&\hskip-.68in
 	|\rom{1}| = |\int_{\mathbb{T}^2} u v_{x} v_{xx} \; dxdz|= |\int_{\mathbb{T}^2} \frac{1}{2} u_x v_{x}^2 \; dxdz| = |\int_{\mathbb{T}^2} \frac{1}{2} w_z v_{x}^2 \; dxdz| = |\int_{\mathbb{T}^2} w v_{x} v_{xz} \; dxdz| \\
 	&&\hskip-.48in
 	\leq C \|w\|_{L^2}^{\frac{1}{2}} (\|w\|_{L^2}^{\frac{1}{2}}+\|w_{x}\|_{L^2}^{\frac{1}{2}}) \|v_{x}\|_{L^2}^{\frac{1}{2}}  (\|v_{x}\|_{L^2}^{\frac{1}{2}}+\|v_{xz}\|_{L^2}^{\frac{1}{2}})\|v_{xz}\|_{L^2}, \\
 	&&\hskip-.68in
 	|\rom{2}| = | \int_{\mathbb{T}^2} u v_x v_{zz} \; dxdz| \leq C \|u\|_{L^2}^{\frac{1}{2}} (\|u\|_{L^2}^{\frac{1}{2}}+\|u_{x}\|_{L^2}^{\frac{1}{2}}) \|v_{x}\|_{L^2}^{\frac{1}{2}}  (\|v_{x}\|_{L^2}^{\frac{1}{2}}+\|v_{xz}\|_{L^2}^{\frac{1}{2}})\|v_{zz}\|_{L^2}, \\
 	&&\hskip-.68in
 	|\rom{3}| = | \int_{\mathbb{T}^2} w v_z v_{xx} \; dxdz| =  | \int_{\mathbb{T}^2} v_{x}(w_{x}v_{z} + wv_{xz}) \; dxdz| \\
 	&&\hskip-.38in
 	\leq C\|w_{x}\|_{L^2} \|v_{x}\|_{L^2}^{\frac{1}{2}} (\|v_{x}\|_{L^2}^{\frac{1}{2}}+\|v_{xz}\|_{L^2}^{\frac{1}{2}} )\|v_{z}\|_{L^2}^{\frac{1}{2}} (\|v_{z}\|_{L^2}^{\frac{1}{2}}+ \|v_{xz}\|_{L^2}^{\frac{1}{2}}) \\
 	&&\hskip-.18in
 	+ C \|w\|_{L^2}^{\frac{1}{2}} (\|w\|_{L^2}^{\frac{1}{2}}+\|w_{x}\|_{L^2}^{\frac{1}{2}}) \|v_{x}\|_{L^2}^{\frac{1}{2}} (\|v_{x}\|_{L^2}^{\frac{1}{2}}+ \|v_{xz}\|_{L^2}^{\frac{1}{2}})\|v_{xz}\|_{L^2}, \\
 	&&\hskip-.68in
 	|\rom{4}|  = | \int_{\mathbb{T}^2} w v_z v_{zz} \; dxdz| \leq C \|w\|_{L^2}^{\frac{1}{2}} (\|w\|_{L^2}^{\frac{1}{2}}+\|w_{x}\|_{L^2}^{\frac{1}{2}}) \|v_{z}\|_{L^2}^{\frac{1}{2}} (\|v_{z}\|_{L^2}^{\frac{1}{2}}+ \|v_{zz}\|_{L^2}^{\frac{1}{2}})\|v_{zz}\|_{L^2}, \\
 	&&\hskip-.68in
 	|\rom{5}|  = |\int_{\mathbb{T}^2} f_0 u \Dd v \; dxdz| = |\int_{\mathbb{T}^2} f_0 \nabla u \nabla v \; dxdz| \leq C \|\nabla u\|_{L^2} \|\nabla v\|_{L^2}.
 \end{eqnarray*}
 As a result of the above and by Young's inequality, we conclude
 \begin{eqnarray*}
 	&&\hskip-.68in
 	\frac{d (1+\|\nabla v\|_{L^2}^{2}+ \alpha^2  \, \| \nabla v_{z}\|_{L^2}^2)}{dt} + 2\ee_1  \|\nabla v\|_{L^2}^{2}  \\
 	&&\hskip-.68in
 	\leq C \left(1+ \|u\|_{L^2}^2 + \|w\|_{L^2}^2 + \|\nabla u\|_{L^2}^2 + \|w_{x}\|_{L^2}^2 \right) \left(1+\|\nabla v\|_{L^2}^{2}+ \alpha^2  \, \| \nabla v_{z}\|_{L^2}^2 \right).
 \end{eqnarray*}
 Thanks to Gronwall inequality, we obtain
 \begin{eqnarray*}
 	&&\hskip-.68in
 	\|\nabla v(t)\|_{L^2}^{2}+ \alpha^2  \, \| \nabla v_{z}(t)\|_{L^2}^2  + \int_0^t 2\ee_1  \|\nabla v(s)\|_{L^2}^{2} ds \\
 	&&\hskip-.68in
 	\leq \left(1+\|\nabla v_0\|_{L^2}^{2}+ \alpha^2  \, \| \nabla \partial_{z} v_{0}\|_{L^2}^2\right) \exp\left(C \int_0^t \left(1+\|u\|_{L^2}^2 + \|w\|_{L^2}^2 + \|\nabla u\|_{L^2}^2 + \|w_{x}\|_{L^2}^2 \right)(s)\; ds\right).
 \end{eqnarray*}
 By virtue of (\ref{L-2}), (\ref{L-2z}), (\ref{LL-2x}) and the above, we have
 \begin{eqnarray}
 v, v_z \in L^{\infty}(0,\mathcal{T};H^1), \label{vH-1}
 \end{eqnarray}
for arbitrary $\mathcal{T}>0$. By taking the $L^2$-inner product of equation  (\ref{AEQ-5}) with $-\Dd T$ in $L^2(\mathbb{T}^2)$, and by integration by parts, thanks to (\ref{BC-1}), we have
 \begin{eqnarray*}
 	&&\hskip-.68in \frac{1}{2} \frac{d \|\nabla T\|_{L^2}^{2}}{dt} +  \kappa  \, \| \Dd T\|_{L^2}^2  =\int_{\mathbb{T}^2}(u T_{x}+w T_{z}) \Dd T.
 \end{eqnarray*}
 By Lemma 1 and Young's inequality, thanks to (\ref{AEQ-4}), we have
 \begin{eqnarray*}
 	&&\hskip-.68in
 	\int_{\mathbb{T}^2}(u T_{x}+w T_{z}) \Dd T\\
 	&&\hskip-.68in
 	\leq C \Big( \| u\|_{L^2}^{\frac{1}{2}} (\| u\|_{L^2}^{\frac{1}{2}} + \| u_{z}\|_{L^2}^{\frac{1}{2}}) \| T_{x}\|_{L^2}^{\frac{1}{2}}  (\| T_{x}\|_{L^2}^{\frac{1}{2}} + \| T_{xx}\|_{L^2}^{\frac{1}{2}})\| \Dd T\|_{L^2} \\
 	&&\hskip-.48in + \| w\|_{L^2}^{\frac{1}{2}} (\|w\|_{L^2}^{\frac{1}{2}} + \| w_{z}\|_{L^2}^{\frac{1}{2}}) \| T_{z}\|_{L^2}^{\frac{1}{2}}  (\| T_{z}\|_{L^2}^{\frac{1}{2}} + \| T_{xz}\|_{L^2}^{\frac{1}{2}})\| \Dd T\|_{L^2}\Big)	\\
 	&&\hskip-.68in
 	\leq \frac{\kappa}{2}\| \Dd T\|_{L^2}^2 + C \left(1+\|u\|_{L^2}^4 + \|u_{z}\|_{L^2}^4 + \|w\|_{L^2}^4 + \|w_{z}\|_{L^2}^4  \right) \|\nabla T\|_{L^2}^{2},\\
 	&&\hskip-.68in
 	=\frac{\kappa}{2}\| \Dd T\|_{L^2}^2 + C \left(1+\|u\|_{L^2}^4 + \|u_{z}\|_{L^2}^4 + \|w\|_{L^2}^4 + \|u_{x}\|_{L^2}^4  \right) \|\nabla T\|_{L^2}^{2}.
 \end{eqnarray*}
 As a result of the above we conclude
 \begin{eqnarray*}
 	&&\hskip-.68in
 	\frac{d \|\nabla T\|_{L^2}^{2}}{dt} +  \kappa  \, \| \Dd T\|_{L^2}^2
 	\leq C \left(1+\|u\|_{L^2}^4 + \|u_{z}\|_{L^2}^4 + \|w\|_{L^2}^4 + \|u_{x}\|_{L^2}^4  \right) \|\nabla T\|_{L^2}^{2}.
 \end{eqnarray*}
 Thanks to Gronwall inequality, we obtain
 \begin{eqnarray*}
 	&&\hskip-.68in
 	\|\nabla T(t)\|_{L^2}^{2}+ \kappa   \,\int_0^t \| \Dd T(s)\|_{L^2}^2 \; ds \\
 	&&\hskip-.68in
 	\leq C\|\nabla T_0\|_{L^2}^{2} \exp\left(C \int_0^t \left(1+ \|u\|_{L^2}^4 + \|u_{z}\|_{L^2}^4 + \|w\|_{L^2}^4 + \|u_{x}\|_{L^2}^4 \right)(s) ds\right).
 \end{eqnarray*}
 By virtue of (\ref{L-2}), (\ref{L-2z}), (\ref{LL-2x}), (\ref{LL-2x2}) and the above,
 we have
 \begin{eqnarray}
 T \in L^{\infty}(0,\mathcal{T};H^1)\cap L^{2}(0,\mathcal{T};H^2) \label{TH-2}.
 \end{eqnarray}
By taking the $L^2$-inner product of equation  (\ref{AEQ-1}) with $u_{xxxx}$, equation (\ref{AEQ-2}) with $v_{xxxx}$, and equation  (\ref{AEQ-3}) with $w_{xxxx}$ in $L^2(\mathbb{T}^2)$, and by integration by parts, thanks to (\ref{BC-1}) and (\ref{wpBC-1}), we get
 \begin{eqnarray*}
 	&&\hskip-.68in \frac{1}{2} \frac{d (\|u_{xx}\|_{L^2}^{2}+ \|v_{xx}\|_{L^2}^{2}+ \alpha^2  \, \| u_{xxz}\|_{L^2}^2 +  \alpha^2  \, \| v_{xxz}\|_{L^2}^2)}{dt} + \ee_1  \|u_{xx}\|_{L^2}^{2} + \ee_1  \|v_{xx}\|_{L^2}^{2}
 	+ \ee_2  \|w_{xx}\|_{L^2}^{2} \\
 	&&\hskip-.65in
 	=  -\int_{\mathbb{T}^2} \left[ u u_{x}+w u_{z} - f_0 v
 	\right]\; u_{xxxx} \; dxdz -\int_{\mathbb{T}^2} \left[ u v_{x}+w v_{z} + f_0 u
 	\right]\; v_{xxxx} \; dxdz \\
 	&&\hskip-.25in -\int_{\mathbb{T}^2} \left[ u_{xxxx} p_{x}+w_{xxxx} p_{z}+w_{xxxx} T
 	\right]\; dxdz.
 \end{eqnarray*}
 By integration by parts, thanks to (\ref{BC-1}), (\ref{wpBC-1}) and (\ref{AEQ-4}), , we have
 \begin{eqnarray*}
 	&&\hskip-.45in   \int_{\mathbb{T}^2} \left(-f_0 v u_{xxxx} + f_0 u v_{xxxx}\right)  dxdz -\int_{\mathbb{T}^2} \left(u_{xxxx} p_{x}+w_{xxxx} p_{z} \right) dxdz=0.
 \end{eqnarray*}
 By integration by parts, using Young's inequality and Lemma 1, thanks to (\ref{BC-1}), (\ref{wpBC-1}) and (\ref{AEQ-4}), we have
 \begin{eqnarray*}
 	&&\hskip-.68in
 	\left|\int_{\mathbb{T}^2} \left[ u u_{x}+w u_{z}
 	\right]\; u_{xxxx} \; dxdz\right|=
 	\left|  \int_{\mathbb{T}^2} \left[ 5w_{} u_{xxz}-\frac{1}{2}w_{xx} u_{z} - 2w_{x} u_{xz}
 	\right]\; u_{xx} \; dxdz \right|  \\
 	&&\hskip-.65in
 	\leq    C \Big[  \|w\|_{L^2}^{\frac{1}{2}} (\|w\|_{L^2}^{\frac{1}{2}} + \|w_{x}\|_{L^2}^{\frac{1}{2}}) \|u_{xx}\|_{L^2}^{\frac{1}{2}} (\|u_{xx}\|_{L^2}^{\frac{1}{2}} + \|u_{xxz}\|_{L^2}^{\frac{1}{2}})\|u_{xxz}\|_{L^2}\\
 	&&\hskip-.45in
 	+ \|w_{xx} \|_{L^2}   \|u_{z}\|_{L^2}^{\frac{1}{2}} ( \|u_{z}\|_{L^2}^{\frac{1}{2}}+ \|u_{xz}\|_{L^2}^{\frac{1}{2}}) \|u_{xx}\|_{L^2}^{\frac{1}{2}} (\|u_{xx}\|_{L^2}^{\frac{1}{2}}+ \|u_{xxz}\|_{L^2}^{\frac{1}{2}})  \\
 	&&\hskip-.45in
 	+  \|w_{x} \|_{L^2}  \|u_{xz}\|_{L^2}^{\frac{1}{2}}(\|u_{xz}\|_{L^2}^{\frac{1}{2}} + \|u_{xxz}\|_{L^2}^{\frac{1}{2}} ) \|u_{xx}\|_{L^2}^{\frac{1}{2}} (\|u_{xx}\|_{L^2}^{\frac{1}{2}} +  \|u_{xxz}\|_{L^2}^{\frac{1}{2}}) \Big] \\
 	&&\hskip-.65in
 	\leq \frac{\ee_2}{6} \|w_{xx}\|_{L^2}^2 +  C\left(1+  \|w_{}\|^2_{L^2} + \|w_{x}\|^2_{L^2} + \|u_{z}\|_{L^2}^2 + \|u_{xz}\|_{L^2}^2  \right)\left(1+ \|u_{xx}\|_{L^2}^{2}  + \alpha^2  \, \| u_{xxz}\|_{L^2}^2 \right),
 \end{eqnarray*}
 and
 \begin{eqnarray*}
 	&&\hskip-.38in
 	|\int_{\mathbb{T}^2} \left[ u v_{x}+w v_{z}
 	\right]\; v_{xxxx} dxdz | =
 	|\int_{\mathbb{T}^2} \left[ u_{xx} v_{x} + w_{xx} v_{z} - 4w v_{xxz} + 2w_{x} v_{xz} \right]\; v_{xx}  dxdz| \\
 	&&\hskip-.38in
 	\leq |\int_{\mathbb{T}^2} \left[ w_{x} v_{xz} + w_{xx} v_{z} - 4w v_{xxz} + 2w_{x} v_{xz} \right]\; v_{xx}  dxdz|  +  |\int_{\mathbb{T}^2} w_x v_x v_{xxz}  \; dxdz| \\
 	&&\hskip-.38in
 	\leq C \Big[ \|w_{x}\|_{L^2}\|v_{xx}\|_{L^2}^{\frac{1}{2}} (\|v_{xx}\|_{L^2}^{\frac{1}{2}} + \|v_{xxz}\|_{L^2}^{\frac{1}{2}}) \|v_{xz}\|^{\frac{1}{2}}_{L^2} (\|v_{xz}\|^{\frac{1}{2}}_{L^2} + \|v_{xxz}\|_{L^2}^{\frac{1}{2}})\\
 	&&\hskip-.18in
 	+ \|w_{xx}\|_{L^2}\|v_{z}\|_{L^2}^{\frac{1}{2}} (\|v_{z}\|_{L^2}^{\frac{1}{2}} + \|v_{xz}\|_{L^2}^{\frac{1}{2}}) \|v_{xx}\|_{L^2}^{\frac{1}{2}} ( \|v_{xx}\|_{L^2}^{\frac{1}{2}} + \|v_{xxz}\|_{L^2}^{\frac{1}{2}} )  \\
 	&&\hskip-.18in
 	+ \|w\|_{L^2}^{\frac{1}{2}}( \|w\|_{L^2}^{\frac{1}{2}} + \|w_x\|_{L^2}^{\frac{1}{2}}) \|v_{xx}\|_{L^2}^{\frac{1}{2}} (\|v_{xx}\|_{L^2}^{\frac{1}{2}} + \|v_{xxz}\|_{L^2}^{\frac{1}{2}}) \|v_{xxz}\|_{L^2}\\
 	&&\hskip-.18in
 	+\|w_{x}\|_{L^2}\|v_{xz}\|_{L^2}^{\frac{1}{2}} (\|v_{xz}\|_{L^2}^{\frac{1}{2}} + \|v_{xxz}\|_{L^2}^{\frac{1}{2}} )\|v_{xx}\|_{L^2}^{\frac{1}{2}} ( \|v_{xx}\|_{L^2}^{\frac{1}{2}} + \|v_{xxz}\|_{L^2}^{\frac{1}{2}}) \\
 	&&\hskip-.18in
 	+ \|w_x\|_{L^2}^{\frac{1}{2}} (\|w_x\|_{L^2}^{\frac{1}{2}} + \|w_{xz}\|_{L^2}^{\frac{1}{2}})  \|v_x\|_{L^2}^{\frac{1}{2}} (\|v_x\|_{L^2}^{\frac{1}{2}} + \|v_{xx}\|_{L^2}^{\frac{1}{2}}) \|v_{xxz}\|_{L^2} \Big] \\
 	&&\hskip-.38in
 	\leq \frac{\ee_2}{6} \|w_{xx}\|_{L^2}^2 +  C\left(1+  \|w_{}\|^2_{L^2} + \|w_{x}\|^2_{L^2} + \|v_{x}\|_{L^2}^2 +  \|v_{z}\|_{L^2}^2 + \|v_{xz}\|_{L^2}^2  \right)\left(1+ \|v_{xx}\|_{L^2}^{2}  + \alpha^2  \, \| v_{xxz}\|_{L^2}^2 \right).
 \end{eqnarray*}
 By integration by parts, using Cauchy-Schwarz inequality and Young's inequality, thanks to  (\ref{BC-1}) and (\ref{wpBC-1}), we have
 \begin{eqnarray*}
 	&&\hskip-.68in
 	|\int_{\mathbb{T}^2} T w_{xxxx} dxdz | = |\int_{\mathbb{T}^2} T_{xx} w_{xx} dxdz | \leq \|T_{xx}\|_{L^2} \|w_{xx}\|_{L^2} \leq \frac{\ee_2}{6} \|w_{xx}\|_{L^2}^2 +  C\|T_{xx}\|_{L^2}^2.
 \end{eqnarray*}
 As a result of the above, we conclude
 \begin{eqnarray*}
 	&&\hskip-.68in
 	\frac{d (1+\|u_{xx}\|_{L^2}^{2} + \|v_{xx}\|_{L^2}^{2}  + \alpha^2  \, \| u_{xxz}\|_{L^2}^2 + \alpha^2  \, \| v_{xxz}\|_{L^2}^2)}{dt} + 2\ee_1  \|u_{xx}\|_{L^2}^{2} + 2\ee_1  \|v_{xx}\|_{L^2}^{2}
 	+ \ee_2  \|w_{xx}\|_{L^2}^{2}  \\
 	&&\hskip-.65in
 	\leq   C \left( 1+  \|w_{}\|^2_{L^2} + \|w_{x}\|^2_{L^2} + \|u_{z}\|_{L^2}^2 + \|u_{xz}\|_{L^2}^2 + \|v_{x}\|_{L^2}^2 + \|v_{z}\|_{L^2}^2 + \|v_{xz}\|_{L^2}^2  + \|T_{xx}\|_{L^2}^2 \right) \\
 	&&\hskip.25in
 	\times \left(1+\|u_{xx}\|_{L^2}^{2} + \|v_{xx}\|_{L^2}^{2}  + \alpha^2  \, \| u_{xxz}\|_{L^2}^2 + \alpha^2  \, \| v_{xxz}\|_{L^2}^2
 	\right).
 \end{eqnarray*}
 By Gronwall inequality, we obtain
 \begin{eqnarray*}
 	&&\hskip-.18in\|u_{xx}(t)\|_{L^2}^{2}+ \|v_{xx}(t)\|_{L^2}^{2} + \alpha^2  \, \| u_{xxz}(t)\|_{L^2}^2 + \alpha^2  \, \| v_{xxz}(t)\|_{L^2}^2 \\
 	&&\hskip.18in
 	+ \int_0^t \left(2 \ee_1  \|u_{xx}(s)\|_{L^2}^{2} + 2\ee_1  \|v_{xx}(s)\|_{L^2}^{2} +  \ee_2  \|w_{xx}(s)\|_{L^2}^{2} \right) \; ds   \nonumber  \\
 	&&\hskip-.28in
 	\leq C(1+\|\partial^2_{xx} u_{0}\|_{L^2}^{2}+ \|\partial^2_{xx} v_{0}\|_{L^2}^{2} +  \alpha^2  \, \| \partial^3_{xxz} u_{0}\|_{L^2}^2 + \alpha^2  \, \| \partial^3_{xxz} v_{0}\|_{L^2}^2 ) \exp \Big(C \int_0^t ( 1+ \|w_{}(s)\|^2_{L^2}\\
 	&&\hskip-.08in  + \|w_{x}(s)\|^2_{L^2} + \|u_{z}(s)\|_{L^2}^2 + \|u_{xz}(s)\|_{L^2}^2 + \|\nabla v(s)\|_{L^2}^2 + \|v_{xz}(s)\|_{L^2}^2 +
 	\|T_{xx}(s)\|_{L^2}^2 ) ds \Big)
 \end{eqnarray*}
 From (\ref{L-2}), (\ref{L-2z}), (\ref{LL-2x}), (\ref{LL-2x2}), (\ref{vH-1}), (\ref{TH-2}) and the above, we have
 \begin{eqnarray}
 u,v \in L^{\infty}(0,\mathcal{T};H^2), \;\; u_{xxz}, v_{xxz} \in L^{\infty}(0,\mathcal{T};L^2), \;\; w \in L^{2}(0,\mathcal{T};H^2), \label{uvxx}
 \end{eqnarray}
 for arbitrary $\mathcal{T}>0$. By virtue of (\ref{uvxx}), thanks (\ref{ESw}) and (\ref{ESwx}), we have
 \begin{eqnarray}
 w \in L^{\infty}(0,\mathcal{T};H^1)\cap L^{2}(0,\mathcal{T};H^2)  \label{wb},
 \end{eqnarray}
for arbitrary $\mathcal{T}>0$. Using standard Galerkin method as in section 3.2, we can establish the existence result, and we omit the details here. Next,  we show the continuous dependence on the initial data and the the uniqueness of the strong solutions. Let $(u_1, v_1, w_1, p_1, T_1)$ and
$(u_2, v_2, w_2, p_2, T_2)$ be two strong solutions of the system
(\ref{AEQ-1})--(\ref{AEQ-5}) with (\ref{EQw})--(\ref{EQpz}), and initial data $((u_0)_1, (v_0)_1, (T_0)_1)$ and
$((u_0)_2, (v_0)_2, (T_0)_2)$, respectively. Denote by $u=u_1-u_2, v=v_1-v_2, w=w_1-w_2, p=p_1 -p_2, T=T_1-T_2.$ Thanks to (\ref{AEQ-3}) and (\ref{AEQ-4}),
it is clear that
\begin{eqnarray}
&&\hskip-.8in
\pp_t \left( u - \alpha^2 u_{zz} \right)  + u_1 u_x + w_1 u_z  + u (u_2)_x + w (u_2)_z + \ee_1 u -f_0 v + p_x = 0,  \label{AUPP-1}  \\
&&\hskip-.8in
\pp_t \left( v - \alpha^2 v_{zz} \right)  + u_1 v_x + w_1 v_z  + u (v_2)_x + w (v_2)_z + \ee_1 v +f_0 u = 0,  \label{AUPP-2}  \\
&&\hskip-.8in
\ee_2 w + p_z +T =0,   \label{AUPP-3}   \\
&&\hskip-.8in
u_x + w_z = 0, \label{AUPP-5} \\
&&\hskip-.8in
\pp_t T - \kappa \Dd T  + u_1 T_x + w_1 T_z  + u (T_2)_x + w (T_2)_z  = 0.  \label{AUPP-4}
\end{eqnarray}
By taking the inner product of equation (\ref{AUPP-1}) with $u$, (\ref{AUPP-2}) with  $v$,
(\ref{AUPP-3}) with  $w$, and (\ref{AUPP-4}) with  $T$
in $L^2(\mathbb{T}^2)$, and by integration by parts, thanks to (\ref{BC-1}), (\ref{AEQ-4}) and (\ref{AUPP-5}), we get
\begin{eqnarray*}
	&&\hskip-.58in \frac{1}{2} \frac{d (\|u\|_{L^2}^{2}+\|v\|_{L^2}^{2}+\|T\|_{L^2}^{2}+ \alpha^2  \, \| u_z\|_{L^2}^2
		+ \alpha^2  \, \| v_z\|_{L^2}^2 )}{dt} + \ee_1 \|u\|_{L^2}^{2} + \ee_1 \|v\|_{L^2}^{2}
	+ \ee_2  \|w\|_{L^2}^{2} + \kappa  \, \| \nabla T\|_{L^2}^2 \\
	&&\hskip-.55in
	\leq  \left| \int_{\mathbb{T}^2} \left[ u (u_2)_x+w(u_2)_z \right]\; u \; dxdz \right|+\left|\int_{\mathbb{T}^2} \left[ u (v_2)_x+w(v_2)_z
	\right]\; v \; dxdz \right|   \\
	&&\hskip-.365in  +\left|\int_{\mathbb{T}^2} wT\; dxdz\right| +\left|\int_{\mathbb{T}^2} \left[ u (T_2)_x+w(T_2)_z
	\right]\; T \; dxdz\right| =: \rom{1}+\rom{2}+\rom{3}+\rom{4}.
\end{eqnarray*}
By integration by parts, using H\"older inequality and Young's inequality, thanks to (\ref{BC-1}), (\ref{AEQ-4}) and (\ref{AUPP-5}) and Lemma 1,
\begin{eqnarray}
	&&\hskip-.48in
	\rom{1} = \left| \int_{\mathbb{T}^2} \left[ u (u_2)_x+w(u_2)_z \right]\; u \; dxdz \right| \leq C \int_{\mathbb{T}^2}|w u_z u_2| + | w (u_2)_z u| \;dxdz, \nonumber\\
	&&\hskip-.28in
	\leq \frac{\ee_2}{8}\|w\|_{L^2}^2 + C \|u_2\|_{L^\infty}^2 \|u_z\|_{L^2}^2 + C\|(u_2)_z\|_{L^2}(\|(u_2)_z\|_{L^2} + \|(u_2)_{xz}\|_{L^2}) (\|u\|_{L^2}^2 + \alpha^2 \|u_z\|_{L^2}^2),  \nonumber \\
&&\hskip-.48in
\rom{2} = \left|\int_{\mathbb{T}^2} \left[ u (v_2)_x+w(v_2)_z
\right]\; v \; dxdz \right| \leq \frac{\ee_2}{8} \|w\|_{L^2}^2 + C\|(v_2)_x\|_{L^\infty}  (\|u\|_{L^2}^2 + \|v\|_{L^2}^2) \nonumber  \\
&&\hskip.08in
+ C\|(v_2)_z\|_{L^2} (\|(v_2)_z\|_{L^2} + \|(v_2)_{xz}\|_{L^2}) (\|v\|_{L^2}^2 + \alpha^2 \|v_z\|_{L^2}^2), \label{AUV}\\
&&\hskip-.48in
	\rom{3} = \left|\int_{\mathbb{T}^2} wT\; dxdz\right| \leq \frac{\ee_2}{8} \|w\|_{L^2}^2 + C \|T\|_{L^2}^2, \nonumber \\
&&\hskip-.48in
	\rom{4} = \left|\int_{\mathbb{T}^2} \left[ u (T_2)_x+w(T_2)_z
	\right]\; T \; dxdz\right| \nonumber\\
	&&\hskip-.28in
	\leq  C \|T\|_{L^2} \|(T_2)_{x}\|_{L^2}^{\frac{1}{2}}  (\|(T_2)_{x}\|_{L^2}^{\frac{1}{2}} + \|(T_2)_{xx}\|_{L^2}^{\frac{1}{2}}) (\|u\|_{L^2} + \|u_{z}\|_{L^2}) \nonumber \\
	&&\hskip-.08in
	+ \frac{\ee_2}{8} \|w\|_{L^2}^2 + \frac{\kappa}{2}\|T_z\|_{L^2}^2 +  C( 1+ \|(T_2)_{z}\|_{L^2}^2)  (\|(T_2)_{z}\|_{L^2}^2 + \|(T_2)_{xz}\|_{L^2}^2) \|T\|_{L^2}^2 \nonumber\\
	&&\hskip-.28in
	\leq \frac{\ee_2}{8} \|w\|_{L^2}^2 + \frac{\kappa}{2}\|\nabla T\|_{L^2}^2 + C (1+ \|(T_2)_{x}\|_{L^2}+  \|(T_2)_{xx}\|_{L^2} + \|(T_2)_{z}\|_{L^2}^4 + \|(T_2)_{z}\|_{L^2}^2\|(T_2)_{xz}\|_{L^2}^2) \nonumber\\
	&&\hskip1.68in
	\times (\|u\|_{L^2}^2 + \|T\|_{L^2}^2 + \alpha^2  \, \| u_z\|_{L^2}^2).
\end{eqnarray}	
From the estimates above, we obtain
\begin{eqnarray*}
	&&\hskip-.68in \frac{d (\|u\|_{L^2}^{2}+\|v\|_{L^2}^{2}+\|T\|_{L^2}^{2}+ \alpha^2  \, \| u_z\|_{L^2}^2
		+ \alpha^2  \, \| v_z\|_{L^2}^2 )}{dt} + \ee_1 \|u\|_{L^2}^{2} + \ee_1 \|v\|_{L^2}^{2}
	+ \ee_2  \|w\|_{L^2}^{2} + \kappa  \, \| \nabla T\|_{L^2}^2 \\
	&&\hskip-.65in
	\leq   C K \left(\|u\|_{L^2}^{2}+\|v\|_{L^2}^{2}+\|T\|_{L^2}^2 + \alpha^2  \, \| u_z\|_{L^2}^2
	+ \alpha^2  \, \| v_z\|_{L^2}^2\right),
\end{eqnarray*}
where
\begin{eqnarray*}
	&&\hskip-.68in
	K=1+ \|u_2\|_{L^\infty}^2 + \|(v_2)_x\|_{L^\infty} + \|(u_2)_z\|_{L^2}^2 + \|(u_2)_{xz}\|_{L^2}^2 + \|(v_2)_z\|_{L^2}^2 + \|(v_2)_{xz}\|_{L^2}^2 \\
	&&\hskip-.28in
	+ \|(T_2)_{x}\|_{L^2} + \|(T_2)_{xx}\|_{L^2} + \|(T_2)_{z}\|_{L^2}^4 + \|(T_2)_{z}\|_{L^2}^2\|(T_2)_{xz}\|_{L^2}^2.
\end{eqnarray*}	
Using lemma 2, thanks to (\ref{TH-2}) and (\ref{uvxx}), we obtain $K\in L^1(0,\mathcal{T})$. Therefore, by Gronwall inequality, we obtain
\begin{eqnarray*}
	&&\hskip-.68in
	\|u(t)\|_{L^2}^{2}+\|v(t)\|_{L^2}^{2}+\|T(t)\|_{L^2}^{2}+ \alpha^2  \, \| u_z(t)\|_{L^2}^2
	+ \alpha^2  \, \| v_z(t)\|_{L^2}^2   \\
	&&\hskip-.65in
	\leq
	(\|u(t=0)\|_{L^2}^2 + \alpha^2 \|u_z(t=0)\|_{L^2}^2+\|v(t=0)\|_{L^2}^2 + \alpha^2 \|v_z(t=0)\|_{L^2}^2+\|T(t=0)\|_{L^2}^2)
	e^{ C \int_0^t K(s)ds},
\end{eqnarray*}
The above inequality proves the continuous dependence of the
solutions on the initial data, and in particular, when
$u(t=0)=v(t=0)=T(t=0)=0$, we have $u(t)=v(t)=T(t)=0,$ for all $t\geq 0$.
Therefore, the strong solution is unique.
\end{proof}
In the case when $f_0=0$ and $v\equiv 0$, our system (\ref{EQO1-1})--(\ref{EQO1-5}) will be reduced to the following system:
\begin{eqnarray}
&&\hskip-.8in
(u-\alpha^2  u_{zz})_t  + u\, u_x + w u_z + p_x = 0,  \label{SEQO-1}  \\
&&\hskip-.8in
\ee_2 w + p_z  +  T =0,    \label{SEQO-2}  \\
&&\hskip-.8in
u_x + w_z =0,   \label{SEQO-3} \\
&&\hskip-.8in
T_t - \kappa \Dd T   + u \, T_x + w \, T_z =  0, \label{SEQO-4}
\end{eqnarray}
in $\mathbb{T}^2$. We impose similar boundary and initial conditions for this system:
\begin{eqnarray}
&&\hskip-.8in
u, \;w,\; p\;  \text{and} \; T\;  \text{are periodic in} \; x \; \text{and} \; z \; \text{with period 1},  \label{SBCO-1} \\
&&\hskip-.8in
u, \; p\;  \text{are even in} \; z, \; \text{and} \; w, \; T \; \text{are odd in} \; z, \label{SBCO-2} \\
&&\hskip-.8in
(u, T)|_{t=0}=(u_0, T_0). \label{SICO-1}
\end{eqnarray}
Using analogue argument as in section 3.1, the system (\ref{SEQO-1})--(\ref{SEQO-4}) subjects to (\ref{SBCO-1})--(\ref{SICO-1}) is equivalent to the following:
\begin{eqnarray}
&&\hskip-.8in
(u-\alpha^2  u_{zz})_t  + u\, u_x + w u_z +\ee_1 u + p_x = 0,  \label{SEQ-1}  \\
&&\hskip-.8in
T_t - \kappa \Dd T   + u \, T_x + w \, T_z =  0, \label{SEQ-2}
\end{eqnarray}
with $w,p_x,p_z$ defined by:
\begin{eqnarray}
&&\hskip-.8in
w(x,z):=-\int_0^z u_x(x,s)ds,  \label{SEQw}  \\
&&\hskip-.8in
p_{x}(x,z):= \ee_2 \int_0^z\int_0^s u_{xx}(x,\xi)d\xi ds-\int_0^zT_x(x,s)ds \nonumber \\
&&\hskip-.2in
+\int_0^1 \Big[\int_0^{z'} T_x(x,s)ds-\ee_2 \int_0^{z'}\int_0^s u_{xx}(x,\xi)d\xi ds -2uu_x(x,z')\Big]dz', \label{SEQpx}  \\
&&\hskip-.8in
p_z(x,z):= -T(x,z) + \ee_2 \int_0^z u_x(x,s)ds. \label{SEQpz}
\end{eqnarray}
We are interested in the system (\ref{SEQ-1})--(\ref{SEQ-2}) with (\ref{SEQw})--(\ref{SEQpz}) in the unit two dimensional torus $\mathbb{T}^2$, subject to the following symmetry boundary conditions and initial conditions:
\begin{eqnarray}
&&\hskip-.8in
u \; \text{and} \; T\; \text{are periodic in} \; x \; \text{and} \; z\;  \text{with period 1}; \label{SBC-1} \\
&&\hskip-.8in
u \; \text{is even in} \; z, \; \text{and} \; T\;  \text{is odd in}\; z; \label{SBC-2}  \\
&&\hskip-.8in
(u, T)|_{t=0}=(u_0, T_0).\label{SIC-1}
\end{eqnarray}
We have the global well-posedness for system (\ref{SEQ-1})--(\ref{SEQ-2}) with (\ref{SEQw})--(\ref{SEQpz}), for initial condition with less regularity. i.e., for $u_0, \partial_z u_{0}\in H^1$ and $T_0\in{H^1}$. Let us give the definition of strong solution first.
\begin{definition}
	Suppose that $u_0 \in H^1(\mathbb{T}^2)$ and $T_0 \in H^1(\mathbb{T}^2)$ satisfy the symmetry conditions (\ref{SBC-1}) and (\ref{SBC-2}), with the compatibility condition $\int_0^1 \partial_x u_{0} dz = 0$. Moreover, suppose that $\partial_z u_{0}\in H^1$. Given time $\mathcal{T}>0$, we say $(u,T)$ is a strong solution to the system (\ref{SEQ-1})--(\ref{SEQ-2}) with (\ref{SEQw})--(\ref{SEQpz}), subjecto to (\ref{SBC-1})--(\ref{SIC-1}), on the time interval $[0,\mathcal{T}]$, if \\
	
	(i) u and T satisfy the symmetry conditions (\ref{SBC-1}) and (\ref{SBC-2});\\
	
	(ii) u and T have the regularities
	\begin{eqnarray*}
		&&\hskip-.8in
		u\in L^\infty(0,\mathcal{T};H^1)\cap C([0,\mathcal{T}]; L^2), \;\; u_{z}\in L^\infty(0,\mathcal{T};H^1), \;\; 	\partial_t u\in L^2(0,\mathcal{T};L^2), \\
		&&\hskip-.8in
		T\in L^2(0,\mathcal{T};H^2)\cap L^\infty(0,\mathcal{T};H^1)\cap C([0,\mathcal{T}]; L^2), \;\; 	\partial_t T \in L^2(0,\mathcal{T};L^2);
	\end{eqnarray*}
	
	(iii) u, T satisfy system (\ref{SEQ-1})--(\ref{SEQ-2}) in the following sense:
	\begin{eqnarray*}
		&&\hskip-.8in
		(u-\alpha^2  u_{zz})_t  + u\, u_x + w u_z +\ee_1 u + p_x = 0  \;\;  \text{in} \; \; L^2(0,\mathcal{T}; L^2);\\
		&&\hskip-.8in
		T_t - \kappa \Dd T   + u \, T_x + w \, T_z =  0  \;\;  \text{in} \; \; L^2(0,\mathcal{T}; L^2),
	\end{eqnarray*}
	with $w,p_x,p_z$ defined by (\ref{SEQw})--(\ref{SEQpz}), and fulfill the initial condition (\ref{SIC-1}).
\end{definition}
Based on theorem \ref{T-MAIN3}, we have the following theorem on the existence and uniqueness of strong solutions to system (\ref{SEQ-1})--(\ref{SEQ-2}) with (\ref{SEQw})--(\ref{SEQpz}), subject to (\ref{SBC-1})--(\ref{SIC-1}), on $\mathbb{T}^2 \times (0,\mathcal{T})$, for any positive time $\mathcal{T}$. The proof is similar as theorem \ref{T-MAIN3}, and we omit the details here.

\begin{theorem}
	Suppose that $u_0 \in H^1(\mathbb{T}^2)$ and $T_0 \in H^1(\mathbb{T}^2)$ satisfy the symmetry conditions (\ref{SBC-1}) and (\ref{SBC-2}), with the compatibility condition $\int_0^1 \partial_x u_{0} dz = 0$. Moreover, suppose that $\partial_z u_{0}\in H^1$. Given time $\mathcal{T}>0$.
	Then there exists a unique strong solution $(u, T)$ of the system (\ref{SEQ-1})--(\ref{SEQ-2}) with (\ref{SEQw})--(\ref{SEQpz}), subject to (\ref{SBC-1})--(\ref{SIC-1}), on the interval $[0,\mathcal{T}]$. Moreover, the unique strong solution $(u, T)$ depends continuously on the initial data. Same result holds when $T\equiv 0$.
\end{theorem}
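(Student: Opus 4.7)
The plan is to mirror the structure of the proof of Theorem \ref{T-MAIN3}: construct solutions via a Galerkin scheme in the spaces $\mathcal{E}_m$ (for $u$) and $\mathcal{O}_m$ (for $T$), derive \textit{a priori} estimates that are uniform in $m$ and in time on $[0,\mathcal{T}]$, pass to the limit using the Aubin--Lions Proposition, and then establish uniqueness and continuous dependence by an $L^2$ energy comparison. Since the argument for existence is entirely parallel, I will focus on the formal \textit{a priori} estimates, which is where the reduced regularity requirement ($u_0,\partial_z u_0 \in H^1$ and $T_0\in H^1$, but \emph{no} $\partial_{xxz}^3 u_0$ assumption) enters and must be justified.

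The first step is the basic $L^2$ energy estimate: take the $L^2$-inner product of (\ref{SEQ-1}) with $u$, of (\ref{EQ-3}) with $w$, and of (\ref{SEQ-2}) with $T$, use integration by parts together with (\ref{BC-1}), (\ref{wpBC-1}) and (\ref{EQ-4}) to cancel the transport and pressure terms, and absorb the cross term $\int wT\, dxdz$ into the $\ee_2\|w\|_{L^2}^2$ and $\|T\|_{L^2}^2$ contributions via Young's inequality. Gronwall then gives
$$u\in L^\infty(0,\mathcal{T};L^2),\;\; \alpha\, u_z\in L^\infty(0,\mathcal{T};L^2),\;\; w\in L^2(0,\mathcal{T};L^2),\;\; T\in L^\infty(0,\mathcal{T};L^2)\cap L^2(0,\mathcal{T};H^1).$$
Next, test (\ref{SEQ-1}) against $-u_{zz}$ and (\ref{EQ-3}) against $-w_{zz}$; the triple products involving $u_z u_{zz}$ and $w u_{zz} u_z$ rearrange via $u_x+w_z=0$ so that the full cubic nonlinear contribution vanishes after an integration by parts, and the remaining source $-w_z T_z$ is absorbed by $\frac{\ee_2}{2}\|w_z\|_{L^2}^2+C\|T_z\|_{L^2}^2$. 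This upgrades $u_{zz}$ to $L^\infty(0,\mathcal{T};L^2)$. Testing (\ref{SEQ-1}) against $-u_{xx}$ and (\ref{EQ-3}) against $-w_{xx}$ gives control of $u_x$ and $\alpha u_{xz}$ in $L^\infty(0,\mathcal{T};L^2)$; here the nonlinear term
$$\Bigl|\int_{\T^2}(uu_x+wu_z)u_{xx}\,dxdz\Bigr|=\Bigl|\int_{\T^2}(2wu_x u_{xz}+w_xu_zu_x)\,dxdz\Bigr|$$
is handled with Lemma 1 and Young's inequality in exactly the form that appears in $|I_1|$ of Section 3.2.2, but because we have no $v$-equation, we never encounter the $v_{xx}$ obstruction flagged in the remark of Section 3.3, and the bound depends only on $\|w\|_{L^2}$, $\|w_x\|_{L^2}$, $\|u_z\|_{L^2}$, $\|u_{xz}\|_{L^2}$ --- all already under control. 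The pressure/temperature source gives only $C\|T_x\|_{L^2}^2+\frac{\ee_2}{6}\|w_x\|_{L^2}^2$. The $H^1$ estimate for $T$ is obtained by testing (\ref{SEQ-2}) with $-\Delta T$ and estimating $(uT_x+wT_z)\Delta T$ via Lemma 1 in terms of quantities already bounded, closing the loop by Gronwall.

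The main technical obstacle is precisely the estimate involving $(uu_x+wu_z)u_{xx}$: here it is essential that we may \emph{rewrite} $u_{xx}=-w_{xz}$ through the incompressibility (\ref{EQ-4}), so that every factor appearing after integration by parts contains at most one horizontal derivative of $u$ and is controllable in terms of $\|u\|_{H^1}$, $\|u_z\|_{H^1}$, and $\|w\|_{H^1}$ --- this is exactly the reduction that was unavailable for the $v$-equation in Theorem \ref{T-MAIN3} and is why we can drop the $\partial^3_{xxz}u_0\in L^2$ hypothesis. Once these \textit{a priori} estimates are established for the Galerkin truncations, the Aubin--Lions Proposition yields compactness sufficient to pass to the limit in the nonlinear terms, giving existence in the regularity class of the definition.

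Finally, for uniqueness and continuous dependence, let $(u_1,w_1,p_1,T_1)$ and $(u_2,w_2,p_2,T_2)$ be two strong solutions with differences $u,w,p,T$. Subtracting the respective equations, testing the difference of (\ref{SEQ-1}) with $u$, (\ref{EQ-3}) with $w$, and (\ref{SEQ-2}) with $T$, then using integration by parts, Lemma 1, Lemma 2 (to pass from $H^1$-bounds on the reference solution to $L^\infty$-bounds on $(u_2)_z$, $(T_2)_x$ and similar quantities via the condition $\partial^2_{xz}f\in L^2$), and Young's inequality, leads to a differential inequality of the form
$$\frac{d}{dt}\bigl(\|u\|_{L^2}^2+\alpha^2\|u_z\|_{L^2}^2+\|T\|_{L^2}^2\bigr)\le C\,K(t)\bigl(\|u\|_{L^2}^2+\alpha^2\|u_z\|_{L^2}^2+\|T\|_{L^2}^2\bigr),$$
where $K\in L^1(0,\mathcal{T})$ thanks to the regularity of the reference solution established above. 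Gronwall's inequality then yields both continuous dependence on the data and, by setting the initial difference to zero, uniqueness. The case $T\equiv 0$ is strictly simpler since the temperature estimates and the associated $\int wT$ and $\int T_x$ sources drop out entirely.
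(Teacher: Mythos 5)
Your proposal follows exactly the route the paper intends: the paper itself omits the proof of this theorem, stating only that it is ``similar'' to Theorem \ref{T-MAIN3}, and your sketch correctly reproduces that scheme (Galerkin approximation in $\mathcal{E}_m$ and $\mathcal{O}_m$; energy estimates obtained by testing with $u$, $-u_{zz}$, $-u_{xx}$ and $-\Dd T$; Aubin--Lions compactness; $L^2$ comparison for uniqueness), and you correctly identify --- in agreement with the paper's own remark following the theorem --- that the relaxed hypotheses are admissible precisely because the absence of a $v$-equation removes the need to bound $\|(v_2)_x\|_{L^\infty}$ in the uniqueness step, while $u_{xx}=-w_{xz}$ disposes of the second horizontal derivative in the $u$-equation. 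One caveat: your parenthetical claim that Lemma 2 supplies $L^\infty$ bounds on $(u_2)_z$ and $(T_2)_x$ is not available in this regularity class, since it would require $u_{xzz}$ (respectively $T_{xxz}$) in $L^2$, which the hypotheses $u_0,\partial_z u_0\in H^1$, $T_0\in H^1$ do not provide; those terms must instead be estimated with the anisotropic Lemma 1, using only $\|(u_2)_z\|_{L^2}$, $\|(u_2)_{xz}\|_{L^2}$ and $\|(T_2)_x\|_{L^2}$, $\|(T_2)_{xx}\|_{L^2}$, exactly as in the displayed uniqueness estimates in the proof of Theorem \ref{T-MAIN3}; Lemma 2 is legitimately invoked only for $\|u_2\|_{L^\infty}$. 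With that correction the argument closes as you describe.
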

\begin{remark}
	The reason why we need to assume more regularity for the initial data to system (\ref{AEQ-1})--(\ref{AEQ-5}) is that we need a bound for $\|(v_2)_x\|_{L^\infty}$ appears in (\ref{AUV}). If we do not have the evolution equation in $v$, we can require less for the initial data.
\end{remark}

\section{Convergence}
In this section, we will prove the convergence of the strong solution of the following system
\begin{eqnarray}
&&\hskip-.8in
(u^\alpha-\alpha^2  u^\alpha_{zz})_t -\nu u^\alpha_{zz} + u^\alpha u^\alpha_x + w^\alpha u^\alpha_z + \ee_1 u^\alpha + p^\alpha_x = 0,  \label{C-1}  \\
&&\hskip-.8in
\ee_2 w^\alpha + p^\alpha_z =0 ,   \label{C-2}  \\
&&\hskip-.8in
u^\alpha_x + w^\alpha_z =0 ,  \label{C-3}
\end{eqnarray}
subjects to the following symmetric boundary conditions and initial condition
\begin{eqnarray}
&&\hskip-.8in
u^\alpha, \; w^\alpha \;\;\text{and}\;\; p^\alpha \;\;\text{are periodic in} \; x \; \text{and} \; z \; \text{with period 1}; \label{CBC-1} \\
&&\hskip-.8in
u^\alpha, \;p^\alpha \;\; \text{are even in} \;z, \;\; \text{and} \;\; w^\alpha \;\; \text{is odd in}\; z; \label{CBC-2} \\
&&\hskip-.8in
u^\alpha|_{t=0}=u^\alpha_0,\label{CIC-1}
\end{eqnarray}
to the strong solution of system (\ref{PP-1})--(\ref{PP-3}) subjects to (\ref{DBC-1})--(\ref{DIC-1}), as $\alpha\rightarrow 0$.
\begin{remark}
	The global well-posedness of system (\ref{C-1})--(\ref{C-3}) subjects to (\ref{CBC-1})--(\ref{CIC-1}) can be easily obtained as in section 4. Moreover, as indicated in the last part of section 4, we only need to assume that $u^\alpha_0$ and $u^\alpha_{0z} \in H^1(\mathbb{T}^2)$ since we do not have the evolution equation in $v^\alpha$.
\end{remark}
\begin{theorem}	\label{T-C}
	Suppose that $u_0, \{u_0^\alpha\}_{0< \alpha \leq 1}\subset H^1(\mathbb{T}^2)$ satisfy the symmetry conditions (\ref{DBC-1})--(\ref{DBC-2}) and (\ref{CBC-1})--(\ref{CBC-2}), with the compatibility conditions $\int_0^1 \partial_x u_{0} dz = 0$ and $\int_0^1 \partial_x u_{0}^\alpha dz = 0$, for $\forall \; 0<\alpha\leq 1$, and suppose that $\partial^2_{xz} u_{0}\in L^2(\mathbb{T}^2), \{\partial_z u_{0}^\alpha\}_{0< \alpha \leq 1}\subset H^1(\mathbb{T}^2)$. Moreover, suppose there exists some constant $M>0$ such that the following uniform bound for initial data holds:
	\begin{eqnarray}
	\sup_{0< \alpha \leq 1} \;\Big( \|u_0^\alpha\|_{L^2} + \|\partial_z u_0^\alpha\|_{L^2} + \alpha \|\partial^2_{zz} u_0^\alpha\|_{L^2} \Big) \leq M. \label{UN}
	\end{eqnarray}
	Let $\mathcal{T}>0$ be such that $u$ is the strong solution of system (\ref{PP-1})--(\ref{PP-3}) on $[0,\mathcal{T}]$ with initial data $u_0$. Let $u^\alpha$ be the strong solution to system (\ref{C-1})--(\ref{C-3}) on $[0,\mathcal{T}]$ with initial data $u_0^\alpha$. If $u_0^\alpha \rightarrow u_0$ in $L^2$, as $\alpha \rightarrow 0$, then $u^\alpha \rightarrow u$ in $L^\infty(0,\mathcal{T}; L^2)$, and $u_z^\alpha \rightarrow u_z$ in $L^2(0,\mathcal{T}; L^2)$, as $\alpha \rightarrow 0$.
\end{theorem}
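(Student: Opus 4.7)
The plan is to prove the convergence by a direct energy estimate on $\tilde u := u^\alpha - u$, $\tilde w := w^\alpha - w$, $\tilde p := p^\alpha - p$, controlling $\|\tilde u\|_{L^\infty(L^2)}$ and $\|\tilde u_z\|_{L^2(L^2)}$ by a quantity that tends to zero as $\alpha\to 0$. First I record the uniform-in-$\alpha$ bounds for $u^\alpha$: testing (\ref{C-1}) against $u^\alpha$, all advective and pressure contributions cancel by incompressibility, symmetry, and (\ref{C-2}), yielding
\begin{equation*}
\tfrac{d}{dt}\bigl(\|u^\alpha\|_{L^2}^2+\alpha^2\|u^\alpha_z\|_{L^2}^2\bigr)+2\nu\|u^\alpha_z\|_{L^2}^2+2\ee_1\|u^\alpha\|_{L^2}^2+2\ee_2\|w^\alpha\|_{L^2}^2=0,
\end{equation*}
so by (\ref{UN}) the quantities $\|u^\alpha\|_{L^\infty(L^2)}$, $\alpha\|u^\alpha_z\|_{L^\infty(L^2)}$, $\|u^\alpha_z\|_{L^2(L^2)}$, $\|w^\alpha\|_{L^2(L^2)}$ are bounded uniformly in $\alpha$. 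Repeating with test function $-u^\alpha_{zz}$ (the nonlinear and pressure terms again cancel via the identical computation displayed in section~4) yields in addition uniform bounds on $\|u^\alpha_z\|_{L^\infty(L^2)}$ and $\alpha\|u^\alpha_{zz}\|_{L^\infty(L^2)}$.

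Next, subtract (\ref{PP-u}) from (\ref{C-1}):
\begin{equation*}
\tilde u_t-\alpha^2 u^\alpha_{zzt}-\nu\tilde u_{zz}+u^\alpha\tilde u_x+\tilde u\,u_x+w^\alpha\tilde u_z+\tilde w\,u_z+\ee_1\tilde u+\tilde p_x=0,
\end{equation*}
together with $\ee_2\tilde w+\tilde p_z=0$ and $\tilde u_x+\tilde w_z=0$. Testing against $\tilde u$, the $(u^\alpha,w^\alpha)$-transport piece vanishes by the divergence-free condition, the pressure yields $+\ee_2\|\tilde w\|_{L^2}^2$, and $-\alpha^2(u^\alpha_{zzt},\tilde u)$ integrates by parts in $z$ to $\alpha^2(u^\alpha_{zt},\tilde u_z)$. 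Using the decomposition $u^\alpha_z=\tilde u_z+u_z$ and then integrating by parts in $z$ once more,
\begin{equation*}
\alpha^2(u^\alpha_{zt},\tilde u_z)=\tfrac{\alpha^2}{2}\tfrac{d}{dt}\|u^\alpha_z\|_{L^2}^2-\alpha^2(u^\alpha_{zt},u_z)=\tfrac{\alpha^2}{2}\tfrac{d}{dt}\|u^\alpha_z\|_{L^2}^2+\alpha^2(u^\alpha_t,u_{zz}).
\end{equation*}
The two nonlinear terms $-\int\tilde u^2 u_x$ and $-\int\tilde w\,u_z\,\tilde u$ on the right are bounded using $\|\tilde w\|_{L^2}\leq\|\tilde u_x\|_{L^2}$ from (\ref{ESw}), the identity $\int\tilde u^2 u_x = 2\int\tilde u\,\tilde u_z\,w$, Young's inequality, and the strong-solution regularity of section~3.3 (in particular $u_z\in L^2(0,\mathcal{T};L^\infty)$ and $\|w\|_{L^\infty}\in L^2(0,\mathcal{T})$ via Lemma~2), producing a bound of the form $\tfrac{\nu}{2}\|\tilde u_z\|_{L^2}^2+\tfrac{\ee_2}{2}\|\tilde w\|_{L^2}^2+K(t)\|\tilde u\|_{L^2}^2$ with $K\in L^1(0,\mathcal{T})$.

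Integrating in time, the monotone quantity $\tfrac{\alpha^2}{2}\|u^\alpha_z(t)\|_{L^2}^2$ is dropped from the left, $\tfrac{\alpha^2}{2}\|u^\alpha_z(0)\|_{L^2}^2\leq\tfrac{\alpha^2}{2}M^2=O(\alpha^2)$, and the Voigt residue is controlled by
\begin{equation*}
\Big|\int_0^t\alpha^2(u^\alpha_t,u_{zz})\,ds\Big|\leq\alpha\cdot\bigl(\alpha\|u^\alpha_t\|_{L^2(0,\mathcal{T};L^2)}\bigr)\|u_{zz}\|_{L^2(0,\mathcal{T};L^2)}.
\end{equation*}
Here $\|u_{zz}\|_{L^2(L^2)}$ is finite since $u\in L^2(0,\mathcal{T};H^2)$, while the uniform bound on $\alpha\|u^\alpha_t\|_{L^2(L^2)}$ comes from testing (\ref{C-1}) against $u^\alpha_t$: the identity
\begin{equation*}
\|u^\alpha_t\|^2+\alpha^2\|u^\alpha_{zt}\|^2+\tfrac{d}{dt}\bigl[\tfrac{\nu}{2}\|u^\alpha_z\|^2+\tfrac{\ee_2}{2}\|w^\alpha\|^2+\tfrac{\ee_1}{2}\|u^\alpha\|^2\bigr]=-(u^\alpha u^\alpha_x+w^\alpha u^\alpha_z,u^\alpha_t),
\end{equation*}
combined with Young's inequality and an auxiliary $\|u^\alpha_x\|$-estimate (parallel to the ones of section~4), closes the bound. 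Gronwall's inequality applied to
\begin{equation*}
\|\tilde u(t)\|_{L^2}^2+\nu\int_0^t\|\tilde u_z\|_{L^2}^2\,ds\leq\|u^\alpha_0-u_0\|_{L^2}^2+C\alpha+\int_0^t K(s)\|\tilde u(s)\|_{L^2}^2\,ds
\end{equation*}
then delivers both $\|\tilde u\|_{L^\infty(L^2)}\to 0$ and, via the dissipative term on the left, $\|\tilde u_z\|_{L^2(L^2)}\to 0$, using the hypothesis $u^\alpha_0\to u_0$ in $L^2$.

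The main obstacle is the cross term $\alpha^2(u^\alpha_{zt},u_z)$: the naive split $u^\alpha_{zt}=\tilde u_{zt}+u_{zt}$ would require $u_{zt}$, a regularity that is \emph{not} supplied by the strong-solution class of section~3.3 (since $u\in L^2(H^2)$ does not imply $u_{zzz}\in L^2$). The pivotal maneuver is the additional integration by parts in $z$, which trades $u_{zt}$ for $u_{zz}\in L^2(L^2)$ (available) at the cost of introducing $u^\alpha_t$, whose $\alpha$-weighted $L^2(L^2)$ norm can be bounded through a dedicated energy estimate on the Voigt system itself; this is the technical heart of the argument.
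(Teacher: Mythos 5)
Your overall architecture --- uniform $z$-energy bounds for $u^\alpha$, an $L^2$ energy estimate for the difference $\tilde u=u^\alpha-u$, Gronwall --- is the same as the paper's, and your treatment of the transport and pressure terms is correct. The gap sits exactly in the step you call the technical heart. After writing $-\alpha^2(u^\alpha_{zzt},\tilde u)=\tfrac{\alpha^2}{2}\tfrac{d}{dt}\|u^\alpha_z\|_{L^2}^2+\alpha^2(u^\alpha_t,u_{zz})$, you must show $\alpha^2\int_0^t|(u^\alpha_t,u_{zz})|\,ds\to 0$, and for this you invoke a uniform-in-$\alpha$ bound on $\alpha\|u^\alpha_t\|_{L^2(0,\mathcal{T};L^2)}$. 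That bound is not obtainable from the hypotheses. Testing (\ref{C-1}) with $u^\alpha_t$ leaves the nonlinear term $(u^\alpha u^\alpha_x+w^\alpha u^\alpha_z,\,u^\alpha_t)$, and every route to absorbing it (Lemma 1, Lemma 2, or an $\|u^\alpha\|_{L^\infty}$ bound) demands uniform control of \emph{horizontal} derivatives of $u^\alpha$ --- at least $\|u^\alpha_x\|_{L^\infty(0,\mathcal{T};L^2)}$, and through Lemma 1 also $\|u^\alpha_{xz}\|_{L^2}$ or $\|u^\alpha_{xx}\|_{L^2}$. The section-4-type estimate for $\|u^\alpha_x\|$ you propose to run in parallel has a Gronwall prefactor driven by $\|\partial_x u_0^\alpha\|_{L^2}^2+\alpha^2\|\partial^2_{xz}u_0^\alpha\|_{L^2}^2$, and hypothesis (\ref{UN}) controls only $z$-derivatives of the data; nothing prevents $\|\partial_x u_0^\alpha\|_{L^2}$ from blowing up as $\alpha\to 0$. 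The only uniform horizontal information the basic energy estimates supply is $\|u^\alpha_x\|_{L^2(0,\mathcal{T};L^2)}=\|w^\alpha_z\|_{L^2(0,\mathcal{T};L^2)}$, which is far from sufficient. So the chain of auxiliary estimates does not close and the claimed $O(\alpha)$ bound on the Voigt residue is unsupported.

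The fix is precisely the decomposition you considered and rejected. Splitting $u^\alpha_{zt}=\tilde u_{zt}+u_{zt}$ gives $\alpha^2(u^\alpha_{zt},\tilde u_z)=\tfrac{\alpha^2}{2}\tfrac{d}{dt}\|\tilde u_z\|_{L^2}^2+\alpha^2(u_{zt},\tilde u_z)$, and one further integration by parts in $z$ turns the second term into $-\alpha^2(u_t,\tilde u_{zz})$; the regularity $u_{zt}$ that worried you is never needed pointwise --- only $u_t\in L^2(0,\mathcal{T};L^2)$ (Definition 6) and a bound on $\tilde u_{zz}$ in $L^2(0,\mathcal{T};L^2)$ uniform in $\alpha$. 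The latter is exactly what the vertical viscosity supplies: testing (\ref{C-1}) with $-u^\alpha_{zz}$ yields $\nu\|u^\alpha_{zz}\|_{L^2(0,\mathcal{T};L^2)}^2\leq C(M)$ by (\ref{UN}). This is how the paper treats its term $I_7$, bounding it by $C\alpha^2\|u_t\|_{L^2}(\|u^\alpha_{zz}\|_{L^2}+\|u_{zz}\|_{L^2})$ with the product in $L^1(0,\mathcal{T})$ uniformly in $\alpha$, and it requires no estimate whatsoever on $u^\alpha_t$ or on horizontal derivatives of $u^\alpha$. With that single substitution (and the energy $\|\tilde u\|_{L^2}^2+\alpha^2\|\tilde u_z\|_{L^2}^2$ in place of $\|\tilde u\|_{L^2}^2+\alpha^2\|u^\alpha_z\|_{L^2}^2$), the rest of your argument goes through.
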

\begin{proof}
	Let us first derive the uniform bounds of some norms of the strong solution $u^\alpha$. By taking the $L^2$-inner product of equation  (\ref{C-1}) with $u^\alpha, -u^\alpha_{zz}$, and
	equation  (\ref{C-2}) with $w^\alpha, -w^\alpha_{zz}$, in $L^2(\mathbb{T}^2)$, and by integration by parts, thanks to (\ref{CBC-1}), we get
	\begin{eqnarray*}
		&&\hskip-.48in \frac{1}{2} \frac{d}{dt} \Big(\|u^\alpha\|_{L^2}^{2}+ (\alpha^2+1)  \, \| u^\alpha_{z}\|_{L^2}^2 + \alpha^2\; \| u^\alpha_{zz}\|_{L^2}^2 \Big) + \ee_1 \Big( \|u^\alpha\|_{L^2}^{2} + \|u^\alpha_z\|_{L^2}^{2} \Big)\\
		&&\hskip-.25in
		+ \ee_2  \Big(\|w^\alpha\|_{L^2}^{2} + \|w^\alpha_z\|_{L^2}^{2} \Big) + \nu \Big(\|u^\alpha_z\|_{L^2}^{2} + \|u^\alpha_{zz}\|_{L^2}^{2}\Big) \\
		&&\hskip-.45in
		=  -\int_{\mathbb{T}^2} \left( u^\alpha u^\alpha_{x}+w^\alpha u^\alpha_{z}
		\right)\; \left(u^\alpha - u^\alpha_{zz} \right) \; dxdz  -\int_{\mathbb{T}^2}\Big( p^\alpha_{x}\left( u^\alpha-u^\alpha_{zz}
		\right) + p^\alpha_{z}\left( w^\alpha-w^\alpha_{zz}
		\right) \Big)dxdz.
	\end{eqnarray*}
	By integration by parts, thanks to (\ref{C-3}) and (\ref{CBC-1}), we have
	\begin{eqnarray*}
		&&\hskip-.45in   -\int_{\mathbb{T}^2} \left( u^\alpha u^\alpha_{x}+w^\alpha u^\alpha_{z}
		\right)\; \left(u^\alpha - u^\alpha_{zz} \right) \; dxdz  -\int_{\mathbb{T}^2}\Big( p^\alpha_{x}\left( u^\alpha-u^\alpha_{zz}
		\right) + p^\alpha_{z}\left( w^\alpha-w^\alpha_{zz}
		\right) \Big)dxdz =0.
	\end{eqnarray*}
	As a result of the above, we have
	\begin{eqnarray*}
		&&\hskip-.48in
		\frac{d}{dt} \Big(\|u^\alpha\|_{L^2}^{2}+ (\alpha^2+1)  \, \| u^\alpha_{z}\|_{L^2}^2 + \alpha^2\; \| u^\alpha_{zz}\|_{L^2}^2 \Big) + \nu \Big(\|u^\alpha_z\|_{L^2}^{2} + \|u^\alpha_{zz}\|_{L^2}^{2}\Big) + \ee_2  \Big(\|w^\alpha\|_{L^2}^{2} + \|w^\alpha_z\|_{L^2}^{2} \Big) \leq 0.
	\end{eqnarray*}
	Thanks to Gronwall inequality, we obtain
	\begin{eqnarray*}
		&&\hskip-.68in
		\|u^\alpha(t)\|_{L^2}^{2}+ \, \| u^\alpha_{z}(t)\|_{L^2}^2+
		\int_0^t \Big[\nu \left( \|u^\alpha_z(s)\|_{L^2}^{2} + \|u^\alpha_{zz}(s)\|_{L^2}^{2}\right) + \ee_2  \left(\|w^\alpha(s)\|_{L^2}^{2} + \|w^\alpha_z(s)\|_{L^2}^{2} \right)\Big] \; ds  \\
		&&\hskip-.38in
		\leq  \|u^\alpha_0\|_{L^2}^{2}+ (1+\alpha^2) \| \partial_z u^\alpha_{0}\|_{L^2}^2 + \alpha^2 \| \partial^2_{zz} u^\alpha_{0}\|_{L^2}^2,
	\end{eqnarray*}
	for $t\in [0,\mathcal{T}]$. Thanks to the uniform bound for initial data (\ref{UN}),  we have
	\begin{eqnarray}
	&&\hskip-.8in
	\sup_{0< \alpha \leq 1} \;\Big(\|u^\alpha\|_{L^\infty(0,\mathcal{T};L^2)} + \|u^\alpha_z\|_{L^\infty(0,\mathcal{T};L^2)} + \nu \|u^\alpha_{zz}\|_{L^2(0,\mathcal{T};L^2)} \nonumber \\
	&&\hskip-.18in
	  + \ee_2 \|w^\alpha\|_{L^2(0,\mathcal{T};L^2)} + \ee_2 \|w^\alpha_z\|_{L^2(0,\mathcal{T};L^2)} \Big) \leq C(M), \label{BDalpha}
	\end{eqnarray}
	where $C(M)$ is a constant depending on $M$, but not on $\alpha$.
	Now subtracting (\ref{PP-1})--(\ref{PP-2}) from (\ref{C-1})--(\ref{C-2}), we obtain
	\begin{eqnarray}
	&&\hskip-.8in
	\partial_t [(u^\alpha-u)-\alpha^2  (u^\alpha_{zz} - u_{zz})] -\nu (u^\alpha_{zz} - u_{zz})+ \ee_1(u^\alpha - u) + (p^\alpha_x - p_x) \nonumber\\
	&&\hskip-.8in = (u-u^\alpha)u_x + (u_x - u^\alpha_x)u^\alpha + (w-w^\alpha)u_z + (u_z-u^\alpha_z)w^\alpha  - \alpha^2 \partial_t u_{zz},  \label{CD-1}  \\
	\nonumber\\
	&&\hskip-.8in
	\ee_2 (w^\alpha - w) + (p^\alpha_z- p_z) =0 .   \label{CD-2}
	\end{eqnarray}
	For simplicity, we denote $\|\cdot \|:= \|\cdot \|_{L^2}$ from now on. By taking the inner product of equation  (\ref{CD-1}) with $u^\alpha - u$ and
	equation (\ref{CD-2}) with $w^\alpha -w$, by integration by parts, and using (\ref{PP-3}) and (\ref{C-3}), we get
	\begin{eqnarray*}
		&&\hskip-.68in \frac{1}{2} \frac{d}{dt}(\|u^\alpha - u\|^2 + \alpha^2 \|u^\alpha_z - u_z\|^2)
		+ \nu\, \|u_z^\alpha - u_z\|^2 + + \ee_1  \|u^\alpha - u\|^{2}  + \ee_2  \|w^\alpha - w\|^{2}  \\
		&&\hskip-.68in =  \int_{\mathbb{T}^2} \Big[(u-u^\alpha)^2w_z + (u^\alpha-u)(u_x-u^\alpha_x)u^\alpha + (w-w^\alpha)(u^\alpha-u)u_z + (u_z-u^\alpha_z)(u^\alpha-u)w^\alpha \\
		&&\hskip-.68in + (p_x-p^\alpha_x)(u^\alpha-u) + (p_z-p^\alpha_z)(w^\alpha-w) + \alpha^2 \partial_t u_{zz}(u-u^\alpha)\Big] dxdz \\
		&&\hskip-.68in =: I_1+I_2+I_3+I_4+I_5+I_6+I_7.
	\end{eqnarray*}
	By integration by parts, using H\"older inequality and Young's inequality, thanks to (\ref{PP-3}), (\ref{DBC-1}), (\ref{C-3}) and (\ref{CBC-1}), we have
	\begin{eqnarray*}
		&&\hskip-.28in
		I_1 =
		 \int_{\mathbb{T}^2} (u-u^\alpha)^2w_z  dxdz = -2 \int_{\mathbb{T}^2} (u^\alpha-u)_z (u^\alpha-u) w  dxdz
		\leq \frac{\nu}{2} \|u^\alpha_z - u_z\|^2 + C \|w\|_\infty^2 \|u^\alpha - u\|^2,\\
		&&\hskip-.28in
		I_2 + I_4 = \int_{\mathbb{T}^2}\Big[ (u^\alpha-u)(u_x-u^\alpha_x)u^\alpha+ (u_z-u^\alpha_z)(u^\alpha-u)w^\alpha\Big] dxdz \\
		&&\hskip.18in
		  = \frac{1}{2}\int_{\mathbb{T}^2} (u^\alpha-u)^2 (u^\alpha_x + w^\alpha_z) dxdz = 0,\\
		&&\hskip-.28in
		I_3 = \int_{\mathbb{T}^2} (w-w^\alpha)(u^\alpha-u)u_z  dxdz \leq \frac{\ee_2}{2}\|w^\alpha - w\|^2 + C\|u_z\|_\infty^2 \|u^\alpha - u\|^2,\\
        &&\hskip-.28in
		I_5 + I_6 = \int_{\mathbb{T}^2} \Big[(p_x-p^\alpha_x)(u^\alpha-u) + (p_z-p^\alpha_z)(w^\alpha-w)\Big] dxdz \\
		&&\hskip.18in
		 = \int_{\mathbb{T}^2} (p-p^\alpha)[(u-u^\alpha)_x + (w-w^\alpha)_z] dxdz = 0,\\
		&&\hskip-.28in
		I_7 = \alpha^2 \int_{\mathbb{T}^2} \partial_t u_{zz} (u- u^\alpha ) dxdz = \alpha^2 \int_{\mathbb{T}^2} u_t (u -  u^\alpha )_{zz} dxdz \leq C\alpha^2 \|u_t\| (\|u^\alpha_{zz}\| + \|u_{zz}\|).
	\end{eqnarray*}
     From all the estimates above, we obtain
      \begin{eqnarray*}
      	&&\hskip-.68in
      	\frac{d}{dt} (\|u^\alpha - u \|^2 + \alpha^2 \|u_z^\alpha - u_z\|^2 ) + \nu\|u_z^\alpha - u_z\|^2 + \ee_1 \|u^\alpha - u\|^2 + \ee_2 \|w^\alpha - w\|^2 \\
      	&&\hskip-.68in
      	\leq C(\|w\|_\infty^2  + \|u_z\|_\infty^2)(\|u^\alpha - u \|^2 + \alpha^2 \|u_z^\alpha - u_z\|^2) +C\alpha^2 \|u_t\| (\|u^\alpha_{zz}\| + \|u_{zz}\|) .
      	\end{eqnarray*}
     Let us denote by
     $$ F:= \|w\|_{\infty}^2  + \|u_z\|_{\infty}^2, \;\; G:= \|u_t\| (\|u^\alpha_{zz}\| + \|u_{zz}\|).$$
     Therefore, we obtain
     \begin{eqnarray*}
     	&&\hskip-.68in \frac{d}{dt} (\|u^\alpha - u \|^2 + \alpha^2 \|u_z^\alpha - u_z\|^2 ) + \nu\|u_z^\alpha - u_z\|^2 + \ee_1 \|u^\alpha -u \|^2  + \ee_2 \|w^\alpha - w\|^2 \\
     	&&\hskip-.48in \leq CF(\|u^\alpha - u \|^2 + \alpha^2 \|u_z^\alpha - u_z\|^2) + C\alpha^2 G.
     \end{eqnarray*}
     Notice that the constant C appears above may change from line to line, and may depend on $\nu$, $\ee_2$, and $\mathbb{T}^2$, but not on $\alpha$.
     Thanks to Gronwall inequality, we obtain
     \begin{eqnarray}
     	&&\hskip-.68in \|u^\alpha - u \|^2(t) + \alpha^2 \|u_z^\alpha - u_z\|^2(t) + \int_0^t \Big(\nu\|u_z^\alpha - u_z\|^2(s) + \ee_1 \|u^\alpha - u\|^2(s) +\ee_2 \|w^\alpha - w\|^2(s) \Big) ds  \nonumber\\
     	&&\hskip-.68in \leq (\|u^\alpha_0 - u_0\|^2 + \alpha^2 \|\partial_z u_{0}^\alpha - \partial_z u_{0}\|^2)\exp\Big(C \int_0^t F(s)ds\Big) + C\alpha^2 \exp\Big( C \int_0^t F(s)ds\Big) \int_0^t  G(s)ds  \nonumber\\
     	&&\hskip-.68in =: (\|u^\alpha_0 - u_0\|^2 + \alpha^2 \|\partial_z u_{0}^\alpha - \partial_z u_{0}\|^2)\exp\Big(C \int_0^t F(s)ds\Big) + C\alpha^2 H(t). \label{Convergence}
     \end{eqnarray}
      By virture of the regularity of strong solution to system (\ref{PP-1})--(\ref{PP-3}) as stated in Definition 6, and
      the uniform bound (\ref{BDalpha}), using Lemma 2, we have $F,G \in L^1(0,\mathcal{T})$. By virtue of uniform bound (\ref{BDalpha}), we have $\alpha^2 H(t) \rightarrow 0$, as $\alpha \rightarrow 0$. Since $u_0^\alpha \rightarrow u_0$ in $L^2$, and thanks to (\ref{UN}), we have $u^\alpha \rightarrow u$ in $L^\infty(0,\mathcal{T}; L^2)$, $u^\alpha_z \rightarrow u_z$ in $L^2(0,\mathcal{T}; L^2)$, and $w^\alpha \rightarrow w$ in $L^2(0,\mathcal{T}; L^2)$, as $\alpha \rightarrow 0$.

\end{proof}


\section{Blow-up Criterion}
In this section we give a blow-up criterion for system (\ref{PP-1})--(\ref{PP-3}) subjects to (\ref{DBC-1})--(\ref{DIC-1}). The following result follows the idea in \cite{LPTW18}.
\begin{theorem}
	With the same assumptions in Theorem \ref{T-C}, and take $u_0^\alpha = u_0$ for all $\alpha$. Suppose there exists some time $\mathcal{T}^* < \infty$ such that
	\begin{eqnarray}
	\limsup\limits_{\alpha\rightarrow 0^+} \Big(\alpha^2\sup\limits_{t\in[0,\mathcal{T}^*]}   \|u_z^\alpha(t)\|^2\Big) >0, \label{BL-2}
	\end{eqnarray}
	then the solution for system (\ref{PP-1})--(\ref{PP-3}) blows up on $[0,\mathcal{T}^*]$.
\end{theorem}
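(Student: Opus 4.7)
The plan is to prove the contrapositive: assuming the strong solution $u$ of system (\ref{PP-1})--(\ref{PP-3}) does not blow up on $[0,\mathcal{T}^*]$, that is, it exists on $[0,\mathcal{T}^*]$ with the regularity from Definition 6, I would show that $\alpha^2 \sup_{t\in[0,\mathcal{T}^*]}\|u_z^\alpha(t)\|^2 \to 0$ as $\alpha \to 0^+$, directly contradicting \eqref{BL-2}. The global well-posedness result from Section 4 (for the special case $f_0=0$, $v\equiv 0$, $T\equiv 0$) guarantees that $u^\alpha$ exists on $[0,\mathcal{T}^*]$ for every $\alpha>0$, so Theorem \ref{T-C} applies with $\mathcal{T}=\mathcal{T}^*$.

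The central ingredient is the convergence estimate \eqref{Convergence} established in the proof of Theorem \ref{T-C}. Since the hypothesis specifies $u_0^\alpha = u_0$, the initial-data term $\|u_0^\alpha - u_0\|^2 + \alpha^2\|\partial_z u_0^\alpha - \partial_z u_0\|^2$ vanishes, leaving
\[
  \alpha^2 \|u_z^\alpha(t) - u_z(t)\|^2 \;\leq\; C\,\alpha^2\, H(t), \qquad t\in[0,\mathcal{T}^*],
\]
where $H(t) = \exp\bigl(C\int_0^t F(s)ds\bigr)\int_0^t G(s)ds$ with $F(s) = \|w(s)\|_\infty^2 + \|u_z(s)\|_\infty^2$ and $G(s) = \|u_t(s)\|\bigl(\|u_{zz}^\alpha(s)\| + \|u_{zz}(s)\|\bigr)$.

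The key step is verifying that $\sup_{t\in[0,\mathcal{T}^*]} H(t)$ is bounded uniformly in $\alpha$. For the exponential factor, the regularity $u \in L^\infty(0,\mathcal{T}^*;H^1)\cap L^2(0,\mathcal{T}^*;H^2)$ with $u_{xz}\in L^\infty(0,\mathcal{T}^*;L^2)$ combined with Lemma 2 (and the fact that $w = -\int_0^z u_x$ inherits the corresponding regularity) yields $F\in L^1(0,\mathcal{T}^*)$ independently of $\alpha$. For the integrated factor, by Cauchy--Schwarz,
\[
  \int_0^t G(s)\,ds \;\leq\; \|u_t\|_{L^2(0,\mathcal{T}^*;L^2)}\Bigl(\|u_{zz}^\alpha\|_{L^2(0,\mathcal{T}^*;L^2)} + \|u_{zz}\|_{L^2(0,\mathcal{T}^*;L^2)}\Bigr),
\]
and the first norm on the right is controlled uniformly in $\alpha$ by the bound \eqref{BDalpha} derived at the start of the proof of Theorem \ref{T-C} (which gives $\nu\|u_{zz}^\alpha\|_{L^2(0,\mathcal{T}^*;L^2)} \leq C(M)$). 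Hence $\sup_{t\in[0,\mathcal{T}^*]} H(t) \leq C(\mathcal{T}^*,M)$ independently of $\alpha$.

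To conclude, apply the triangle inequality
\[
  \alpha^2\|u_z^\alpha(t)\|^2 \;\leq\; 2\alpha^2\|u_z^\alpha(t) - u_z(t)\|^2 + 2\alpha^2\|u_z(t)\|^2 \;\leq\; 2C\alpha^2 H(t) + 2\alpha^2\|u\|_{L^\infty(0,\mathcal{T}^*;H^1)}^2,
\]
and take the supremum over $t\in[0,\mathcal{T}^*]$; both terms tend to $0$ as $\alpha\to 0^+$, contradicting \eqref{BL-2}. The main (mild) obstacle is simply tracking that every constant in the bound for $H$ can be chosen independently of $\alpha$, which relies on the uniform $\alpha$-estimate \eqref{BDalpha} established when proving Theorem \ref{T-C}; no new estimate is needed.
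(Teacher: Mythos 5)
Your proposal is correct, but it takes a genuinely different and in fact more direct route than the paper. The paper argues via the exact energy identities: it integrates the $L^2$ balance for $u$ and for $u^\alpha$ separately, subtracts them to isolate $\alpha^2\|u_z^\alpha(t)\|^2$, bounds $\|u(t)\|^2-\|u^\alpha(t)\|^2$ from above using the lower bound $\|u^\alpha(t)\|\geq \|u(t)\|-C\alpha H^{1/2}(\mathcal{T}^*)$ (which forces an auxiliary smallness restriction on $\alpha$), and controls the difference of the dissipation integrals through the $L^2_tL^2_x$ convergence rates of $u^\alpha-u$, $u_z^\alpha-u_z$ and $w^\alpha-w$. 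You instead observe that the Gronwall estimate (\ref{Convergence}) already carries the term $\alpha^2\|u_z^\alpha(t)-u_z(t)\|^2$ on its left-hand side, so that with $u_0^\alpha=u_0$ one gets $\alpha^2\|u_z^\alpha(t)-u_z(t)\|^2\leq C\alpha^2 H(t)$ uniformly in $t$, and then the triangle inequality together with $u_z\in L^\infty(0,\mathcal{T}^*;L^2)$ (from the no-blow-up assumption) finishes the contradiction. Both arguments hinge on the same two inputs --- the uniform-in-$\alpha$ bound (\ref{BDalpha}) (which, through $\nu\|u^\alpha_{zz}\|_{L^2(0,\mathcal{T}^*;L^2)}\leq C(M)$, makes $\sup_{t}H(t)$ bounded independently of $\alpha$; you correctly flag this as the one point where uniformity must be checked) and the global solvability of the Voigt system guaranteeing that $u^\alpha$ exists on all of $[0,\mathcal{T}^*]$. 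What your route buys is brevity and the avoidance of the paper's case analysis on the size of $\alpha$ relative to $\|u(\mathcal{T}^*)\|$; what the paper's route buys is that it only uses the $L^\infty_tL^2_x$ convergence of $u^\alpha$ itself plus energy conservation, a structure that generalizes to inviscid Voigt regularizations (as in the Euler--Voigt literature it cites) where the $\alpha^2\|u_z^\alpha-u_z\|^2$ term is the only coercive quantity available. Here, with the term already present in (\ref{Convergence}), your shortcut is legitimate.
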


\begin{proof}
	Assume the solution for system (\ref{PP-1})--(\ref{PP-3}) will not blow up on $[0,\mathcal{T}^*]$, then $u\in L^\infty (0,\mathcal{T}^*; H^1)$ and $\partial_t u\in L^2 (0,\mathcal{T}^*; L^2)$. By taking the inner product of equation (\ref{PP-1}) with u and equation (\ref{PP-2}) with w in $L^2(\mathbb{T}^2)$, by integration by parts and thanks to (\ref{PP-3}) and (\ref{DBC-1}), we have
	\begin{eqnarray}
	&&\hskip-.68in \frac{1}{2}\frac{d}{dt}\|u_t\|^2 + \nu \|u_z\|^2 + \ee_1 \|u\|^2 + \ee_2\|w\|^2=0. \label{B-0}
	\end{eqnarray}
	Integrating (\ref{B-0}) from 0 to $t$ for $t\in[0,\mathcal{T}^*]$, we have
	\begin{eqnarray}
	&&\hskip-.68in \|u(t)\|^2 + 2\int_0^t \Big(\nu \|u_z(s)\|^2 + \ee_1 \|u(s)\|^2 +  \ee_2 \|w(s)\|^2 \Big) ds = \|u_0\|^2. \label{B-1}
	\end{eqnarray}
	On the other hand, using analogue argument for system (\ref{C-1})--(\ref{C-3}), we have
	\begin{eqnarray}
	&&\hskip-.68in \alpha^2 \|u^\alpha_z(t)\|^2 + \|u^\alpha(t)\|^2 + 2\int_0^t \left(\nu \|u^\alpha_z(s)\|^2 + \ee_1 \|u^\alpha(s)\|^2 + \ee_2 \|w^\alpha(s)\|^2\right)ds \nonumber\\
	&&\hskip-.68in
	= \|u^\alpha_0\|^2 + \alpha^2 \|\partial_z u^\alpha_{0}\|^2 = \|u_0\|^2 + \alpha^2 \|\partial_z u_{0}\|^2 .\label{B-2}
	\end{eqnarray}	
	From (\ref{Convergence}) and thanks to the fact that $u_0^\alpha = u_0$, for any $t\in[0,\mathcal{T}^*]$, we have
	\begin{eqnarray}
	\|u^\alpha(t)\| \geq \|u(t)\| - C\alpha H^{1/2}(t) \geq \|u(t)\| - C\alpha H^{1/2}(\mathcal{T}^*), \label{BL2-1}
	\end{eqnarray}
since $H^{1/2}(t)$ is monotonically increasing. By virtue of (\ref{B-1}), we know $\|u_0\|\geq\|u(t)\|\geq \|u(\mathcal{T}^*)\|$ for any $t\in[0,\mathcal{T}^*]$. Therefore, we can take $\alpha < \frac{\|u(\mathcal{T}^*)\|}{CH^{1/2}(\mathcal{T}^*)}$ to guarantee the right hand side of (\ref{BL2-1}) is positive. Take square on (\ref{BL2-1}), we obtain
	\begin{eqnarray}
	&&\hskip-.68in
	\|u^\alpha(t)\|^2 \geq \|u(t)\|^2 - 2\alpha C H^{1/2}(\mathcal{T}^*)\|u(t)\| + C^2\alpha^2H(\mathcal{T}^*)\nonumber \\
	&&\hskip-.12in
	\geq \|u(t)\|^2 - 2\alpha C H^{1/2}(\mathcal{T}^*)\|u_0\| + C^2\alpha^2H(\mathcal{T}^*). \label{BL2-2}
	\end{eqnarray}
Subtracting (\ref{B-2}) from (\ref{B-1}), we have
\begin{eqnarray}
    &&\hskip-.68in
   \|u(t)\|^2 - \|u^\alpha(t)\|^2 = \alpha^2 \|u_z^\alpha(t)\|^2 - \alpha^2 \|\partial_z u_{0}\|^2 \nonumber \\
   &&\hskip-.28in
   + 2\int_0^t \Big(\nu \|u^\alpha_z(s)\|^2 + \ee_1 \|u^\alpha(s)\|^2 + \ee_2 \|w^\alpha(s)\|^2\Big) - \Big(\nu \|u_z(s)\|^2 + \ee_1 \|u(s)\|^2 +  \ee_2 \|w(s)\|^2 \Big)  ds. \label{BL2-3}
\end{eqnarray}
	Combining (\ref{BL2-3}) with (\ref{BL2-2}), we obtain
	\begin{eqnarray}
	&&\hskip-.68in
	\alpha^2 \|u_z^\alpha(t)\|^2 \leq \alpha^2 \|\partial_z u_{0}\|^2 + 2\alpha C H^{1/2}(\mathcal{T}^*)\|u_0\| - C^2\alpha^2H(\mathcal{T}^*)\nonumber \\
	&&\hskip-.28in
	+ 2\int_0^t \left(\nu \|u_z(s)\|^2 + \ee_1 \|u(s)\|^2 +  \ee_2 \|w(s)\|^2 \right) - \left(\nu \|u^\alpha_z(s)\|^2 + \ee_1 \|u^\alpha(s)\|^2 + \ee_2 \|w^\alpha(s)\|^2\right)  ds. \label{BL2-4}
	\end{eqnarray}
By Cauchy--Schwarz inequality and H\"older inequality, thanks to (\ref{DB-3})--(\ref{DB-4}) and
the uniform bound (\ref{BDalpha}), we have the estimate for the last term in (\ref{BL2-4}):
\begin{eqnarray*}
&&\hskip-.68in
2\int_0^t \left(\nu \|u_z(s)\|^2 + \ee_1 \|u(s)\|^2 +  \ee_2 \|w(s)\|^2 \right) - \left(\nu \|u^\alpha_z(s)\|^2 + \ee_1 \|u^\alpha(s)\|^2 + \ee_2 \|w^\alpha(s)\|^2\right)  ds \\
&&\hskip-.68in
=2\int_0^t \Big[ \nu \big(u_z-u^\alpha_z, u_z+ u^\alpha_z \big) +\ee_1 \big(u-u^\alpha, u+ u^\alpha \big)  + \ee_2  \big(w-w^\alpha, w+ w^\alpha \big) \Big] ds \\
&&\hskip-.68in
\leq 2 \int_0^t \Big[ \nu \|u_z-u_z^\alpha\| \|u_z+u_z^\alpha\| + \ee_1 \|u-u^\alpha\| \|u+u^\alpha\| + \ee_2 \|w-w^\alpha\| \|w+w^\alpha\| \Big] ds \\
&&\hskip-.68in
\leq C \left(\|u_z-u_z^\alpha\|_{L^2(0,\mathcal{T}^*; L^2)} + \|u-u^\alpha\|_{L^2(0,\mathcal{T}^*; L^2)} +  \|w-w^\alpha\|_{L^2(0,\mathcal{T}^*; L^2)} \right).
\end{eqnarray*}
Plugging this back into (\ref{BL2-4}), we have
	\begin{eqnarray}
	&&\hskip-.68in
	\alpha^2 \|u_z^\alpha(t)\|^2 \leq \alpha^2 \|\partial_z u_{0}\|^2 + 2\alpha C H^{1/2}(\mathcal{T}^*)\|u_0\| - C^2\alpha^2H(\mathcal{T}^*)\nonumber \\
	&&\hskip.08in
	+ C \left(\|u_z-u_z^\alpha\|_{L^2(0,\mathcal{T}^*; L^2)} + \|u-u^\alpha\|_{L^2(0,\mathcal{T}^*; L^2)} +  \|w-w^\alpha\|_{L^2(0,\mathcal{T}^*; L^2)} \right). \label{BL2-5}
	\end{eqnarray}
By virtue of Theorem \ref{T-C}, the right hand side of (\ref{BL2-5}) is independent of $t$, and it converges to 0 as $\alpha \rightarrow 0$. Therefore, by taking $\limsup\limits_{\alpha\rightarrow 0^+} \sup\limits_{t\in[0,\mathcal{T}^*]}$ on both hand sides of (\ref{BL2-5}), we obtain $$\limsup\limits_{\alpha\rightarrow 0^+} \Big(\alpha^2\sup\limits_{t\in[0,\mathcal{T}^*]}   \|u_z^\alpha(t)\|^2\Big) = 0,$$ which contradicts to (\ref{BL-2}).
\end{proof}

\begin{remark}
	By considering the convergence for the whole system, i.e., the convergence of the strong solution of system (\ref{EQO1-1})--(\ref{EQO1-5}) to the corresponding
	solution of system (\ref{EQO-1})--(\ref{EQO-5}), we can establish similar blow-up criterion for system (\ref{EQO-1})--(\ref{EQO-5}).
\end{remark}

\noindent
\section*{Acknowledgments}
The work of E.S.T.\ was supported in part by the Einstein Stiftung/Foundation - Berlin, through the Einstein Visiting Fellow Program, and by the John Simon Guggenheim Memorial Foundation.

\end{document}